\newtheorem{defn}{Definition}[section]
\newtheorem{thm}{Theorem}[section]
\newtheorem{prop}{Proposition}[section]
\newtheorem{lem}{Lemma}[section]
\newtheorem{rem}{Remark}[section]
\DeclareMathOperator*{\argmin}{argmin}
\newcommand{\R}{\mathbb{R}}
\newcommand{\Rd}{{\mathbb{R}^{n}}}
\newcommand{\diag}{\mathrm{dg}}
\newcommand{\mF}{{\mathcal{F}}}
\newcommand{\mP}{{\mathscr{P}}}
\newcommand{\mpd}{{\mathscr{P}_{2}(\Rd)}}
\newcommand{\ddt}{\frac{d}{dt}}
\newcommand{\id}{\mathrm{id}}
\newcommand{\Xre}{\mathcal{X}_{R_\rho,R_\eta}}
\newcommand{\Kre}{\mathcal{K}_{R_\rho,R_\eta}}
\newcommand{\Hre}{\mathcal{H}_{R_\rho,R_\eta}}
\newcommand{\cmei}{cm_\eta^1}
\newcommand{\Xrre}{\mathcal{X}_{ R_\rho,L_\eta,R_\eta}}
\newcommand{\Krre}{\mathcal{K}_{R_\rho,L_\eta,R_\eta}}
\newcommand{\Hrre}{\mathcal{H}_{R_\rho,L_\eta,R_\eta}}
\pgfplotsset{compat=1.12}
\begin{document}
\author{S. Fagioli \and Y. Jaafra}
\address{ Simone Fagioli, Yahya Jaafra - DISIM - Department of Information Engineering, Computer Science and Mathematics, University of L'Aquila, Via Vetoio 1 (Coppito)
67100 L'Aquila (AQ) - Italy}
\email{simone.fagioli@univaq.it}
\email{yahyaya.jaafra@graduate.univaq.it}
\title[Multiple Patterns Formation for an Aggregation/Diffusion Predator-Prey System]{Multiple Patterns Formation for an Aggregation/Diffusion Predator-Prey System}
\date{}

\begin{abstract}
\noindent
We investigate existence of stationary solutions to an aggregation/diffusion system of PDEs, modelling a two species predator-prey interaction. In the model this interaction is described by non-local potentials that are mutually proportional by a negative constant $-\alpha$, with $\alpha>0$. Each species is also subject to  non-local self-attraction forces together with quadratic diffusion effects. The competition between the aforementioned mechanisms produce a rich asymptotic behaviour, namely the formation of steady states that are composed of multiple bumps, i.e. sums of Barenblatt-type profiles. The existence of such stationary states, under some conditions on the positions of the bumps and the proportionality constant $\alpha$, is showed for small diffusion, by using the functional version of the Implicit Function Theorem. We complement our results with some numerical simulations, that suggest a large variety in the possible strategies the two species use in order to interact each other.
\end{abstract}

\keywords{Aggregation-diffusion systems; Predator-prey model; Patterns formation}

\subjclass[2010]{35B40, 35B36, 35Q92, 45K05, 92D25}

\maketitle

\section{Introduction}
The mathematical modelling of the collective motion through aggregation/diffusion phenomena has raised a lot of interest in the recent years and it has been deeply studied for its application in several areas, such as biology \cite{boi,mogilner,topaz,lewis}, ecology \cite{krause,okubo,parrish}, animal swarming  \cite{BelHa,BelSol,Morale2005,tomkins_kolokolnikov} sociology and economics, \cite{ During, TT1, TT2,Toscani}. 
One of the common idea in this modelling approach is that a certain population composed by \emph{agents} evolves according to \emph{longe-range attraction} and \emph{short-range repulsion} forces between agents. We are interested in modelling the problem of predator-prey interactions, namely we consider two populations that attract (\emph{prey}) and repel (\emph{predators}) each others.  The pioneering works for this problem are the ones by  Lotka, \cite{lotka} and Volterra\cite{volterra},  which describe the predator-prey interaction via reaction terms in a set of differential equations, possibly combined with diffusion terms, see \cite{murray} and the references therein.

As in  \cite{DFrFag2}, in this paper we model predator-prey interactions  via a \emph{transport} terms rather than a reaction ones as follows: consider $N$ predators located at $X_1,\ldots,X_N\in\Rd$, and $M$ prey at $Y_1,\ldots,Y_M\in\Rd$ with masses $m_X^i>0$ and $m_Y^i>0$ respectively. We assume that each agent of the same species interacts under the effect of a radial non-local force that is  attractive in the long range and repulsive in the short range. Moreover, predators are attracted by the prey, while the latter are subject to a repulsive force from the predators, that is proportional to the previous one. This set of modelling assumptions leads to the following system of ODEs:
\begin{equation}\label{eq:particle_intro}
  \begin{cases}
    \displaystyle
  \dot{X}_i(t)=-\sum_{k=1}^N m_X^k\left(\nabla S_1^r(X_i(t)-X_k(t)) + \nabla S_1^a(X_i(t)-X_k(t))\right) - \sum_{h=1}^M m_Y^h \nabla K(X_i(t)-Y_h(t)),
  & \\
  \displaystyle
  \dot{Y}_j(t)=-\sum_{h=1}^M m_Y^h \left(\nabla S_2^r(X_i(t)-X_k(t)) +\nabla S_2^a(Y_j(t)-Y_h(t))\right) + \alpha\sum_{k=1}^M m_X^k \nabla K(Y_j(t)-X_k(t)),&
  \end{cases}
\end{equation}
with $i=1,\ldots,N$ and $j=1,\ldots,M$. The potentials $S_1^a$ and $S_2^a$ are called \emph{self-interaction} and model the long-range attraction among agents of the same species. The potential $K$ is responsible for the predator-prey interaction, and it is called \emph{cross-interaction} potential.  The coefficient $\alpha>0$ models the \emph{escape propensity} of prey from the predators. The short-range repulsion among particles of the same species is modelled by the non-local forces $S_1^r$ and $S_2^r$, this range usually scales with the number of particles, $S_i^r(z) =N^\beta S(N^{\beta/n}z)$ for a smooth functional $S$, see \cite{Morale2005}.

The formal limit when the number of particles tends to infinity leads to the following system of partial differential equations
\begin{equation}\label{eq:continuum_intro}
\begin{dcases}
\partial_{t}\rho = \mbox{div}\big( \rho \nabla\big( d_{1}\rho - S_\rho*\rho-K*\eta \big)\big),
\\
\partial_{t}\eta = \mbox{div}\big( \eta \nabla\big( d_{2}\eta - S_\eta*\eta+\alpha K*\rho \big)\big),
\end{dcases}
\end{equation} 
where $\rho$ and $\eta$ are the densities of predators and prey respectively. Through this limit the (non-local) short-range repulsion formally turns to a (local) nonlinear diffusion terms, being $d_1$ and $d_2$ positive constants modelling the spreading velocity, while the long-range attraction takes into account the non-local self-interactions. 
We can therefore lighten the notation by calling  $S_1^a=S_\rho$ and $S_2^a = S_\eta$.

The goal of this paper is to show that the model above catches one of the main features that occur in nature, namely the formation of  \emph{spatial patterns} where the predators are surrounded of empty zones and the prey aggregates around, that is usually observed in fish schools or in flock of sheeps, see \cite{Guo,MiYa}. In the fully aggregative case, namely system  \eqref{eq:continuum_intro} with $d_1=d_2=0$, the formation of these types of patterns has been studied in several papers, see \cite{chen_kolokolnikov,EvKol,DFrFag2,tomkins_kolokolnikov} and references therein.

Existence theory for solutions to system of the form \eqref{eq:continuum_intro} can be performed in the spirit of  \cite{CL1,DEF}. In particular, system \eqref{eq:continuum_intro} should be framed in the context of non symmetrizable systems, for which the Wasserstein gradient flow theory developed in \cite{AGS} and adapted to systems in \cite{DFrFag} does not work. In  \cite{CL1,DEF,DFrFag},  the authors consider different choices for the diffusion part (no diffusion in \cite{DFrFag}, diagonal nonlinear diffusion in \cite{CL1} and cross-diffusion with dominant diagonal in \cite{DEF}),  and the existence of solutions is proved  via an implicit-explicit version of the JKO scheme \cite{JKO}.

We reduce our analysis to the one-dimensional setting
\begin{equation}\label{main-equation}
\begin{dcases}
\partial_{t}\rho = \partial_x\big( \rho \partial_x\big( d_{1}\rho - S_\rho*\rho-K*\eta \big)\big)
\\
\partial_{t}\eta = \partial_x\big( \eta \partial_x\big( d_{2}\eta - S_\eta*\eta+\alpha K*\rho \big)\big)
\end{dcases}
\end{equation} 
Note that by a simple scaling argument we can always assume that $d_1=d_2=d$, indeed it is enough to multiply the first equation by $d_{2}/d_1$, and setting (by an abuse of notation) $d_2 = d$,  $S_\rho=\frac{d_2}{d_1}S_\rho$, $K=\frac{d_2}{d_1}K$ and $\alpha=\frac{d_1}{d_2}\alpha$.
We are interested in existence of stationary solutions to \eqref{main-equation}, that are solutions to the following system
\begin{equation}
\label{stationary system1}
\begin{dcases}
0 = \big( \rho \big( d\rho - S_\rho*\rho-K*\eta \big)_{x}\big)_{x},
\\
0 = \big( \eta \big( d\eta - S_\eta*\eta+\alpha K*\rho \big)_{x}\big)_{x},
\end{dcases}
\end{equation}
as well as their properties, e.g. symmetry, compact support, etc.


The stationary equation for the one species case, i.e.,
\[
\partial_{t}\rho = \partial_x\big( \rho \partial_x\big( d\rho - S*\rho \big)\big)
\]
is studied several papers, see \cite{Bed,BDFF,CaCaHo,ChFeTo} and therein references. In \cite{BDFF} the Krein-Rutman theorem is used in order to characterise the steady states as
eigenvectors of a certain non-local operator. The authors prove that a unique steady state with given mass and
centre of mass exists provided that $d < \|K\|_{L^1}$, and it exhibits a
shape similar to a Barenblatt profile for the porous medium equation; see \cite{Vazq} and \cite{DFJ} for the local stability analysis. Similar techniques are used in \cite{BFH} in order to  partly extend the result to more general nonlinear diffusion, see also \cite{Kaib}. This approach is used in \cite{BDFFS} in order to explore the formation of segregated stationary states for a system similar to \eqref{main-equation} but in presence of \emph{cross-diffusion}. Unfortunately, when dealing with systems, it is not possible to reproduce one of the major
issues solved in \cite{BDFFS}, namely the one-to-one correspondence between the
diffusion constant (eigenvalue) and the support of the steady state. A \emph{support-independent} existence result for small diffusion coefficient $d$ is obtained in \cite{BDFFS} by using the generalised version of the implicit function theorem, see also \cite{budif} where this approach is used in the one species case.

In this paper we apply the aforementioned approach in order to show that stationary solutions to \eqref{main-equation} are composed of multiple Barenblatt profiles. Let us introduce, for fixed $z_\rho, z_\eta >0$,  the following space
\[
 \mathcal{M} = \left\{(\rho,\eta)\in (L^\infty\cap L^1(\R))^2 : \rho,\, \eta\geq0,\, \|\rho\|_{L^1}=z_\rho,\, \|\eta\|_{L^1}=z_\eta\right\}.
\]

\begin{defn}\label{def:multi_bump}
We say that a pair $(\rho,\eta)\in  \mathcal{M} $ is a \emph{multiple bumps steady state} to \eqref{main-equation} if the pair $(\rho,\eta)$  solves \eqref{stationary system1} weakly and there exist two numbers $N_\rho,N_\eta\in \mathbb{N}$, and two families of intervals  $I_\rho^i=\left[l_\rho^i,r_\rho^i\right]$, for $i=1,...,N_\rho$, and  $I_\eta^h=\left[l_\eta^h,r_\eta^h\right]$, for $h=1,...,N_\eta$ such that
\begin{itemize}
    \item   $I_\rho^i\cap I_\rho^j=\emptyset$, for $i,j=1,...,N_\rho$, $i\neq j$ and $I_\eta^h\cap I_\eta^k=\emptyset$, for $h,k=1,...,N_\eta$, $h\neq k$,
    \item $\rho$ and $\eta$ are supported on 
    \[supp(\rho)=\bigcup_{i=1}^{N_\rho} I_\rho^i \quad\mbox{ and }\quad supp(\eta)=\bigcup_{i=1}^{N_\eta} I_\eta^i,
    \]
    respectively and
    \[
    \rho(x) = \sum_{i=1}^{N_\rho}\rho^i(x)\mathds{1}_{I_\rho^i}(x)\quad\mbox{ and }\quad \eta (x) = \sum_{h=1}^{N_\eta}\eta^h(x)\mathds{1}_{I_\eta^h}(x),
    \]
    where, for $i=1,...,N_\rho$ and $h=1,...,N_\eta$, $\rho^i$ and $\eta^h$ are even w.r.t the centres of $I_\rho^i$ and $I_\eta^h$ respectively, non-negative and $C^1$ functions supported on that intervals.
\end{itemize}
\end{defn}
In order to simplify the notations, in the following we will denote with $l\in\{\rho,\eta\}$ a generic index that recognise one of the two families.

We recall that an interaction potential $G$ is said to be radial if $G(x)=g(|x|)$ for some $g:[0,+\infty)\rightarrow \R$. In particular, $G$ is attractive if $g'(r)>0$ for $r>0$, while it is repulsive if $g'(r)<0$ for $r>0$. 
 Throughout the paper we shall require that all the kernels are smooth and attractive. More precisely:
\begin{itemize}
\item[(A1)] $S_\rho$, $S_\eta$ and $K$ are $C^2(\R)$.
\item[(A2)] $S_\rho$, $S_\eta$ and $K$ are radially symmetric and decreasing w.r.t. the radial variable.
\item[(A3)] $S_\rho$, $S\eta$ and $K$ are non-negative, with finite mass on $\R$.
\end{itemize}
The main result of the paper is the following
\begin{thm}\label{main_thm}
Assume that the interaction kernels are under the assumptions (A1), (A2) and (A3). 
Consider $N_\rho,N_\eta\in\mathbb{N}$ and let $z_l^i$ be fixed positive numbers  for $i=1,2,\cdots,N_l,$ and $l \in \{\rho,\eta\}$. Consider two families of real numbers $\{cm_\rho^i\}_{i=1}^{N_\rho}$ and $\{cm_\eta^i\}_{i=1}^{N_\eta}$ such that
    \begin{itemize}
        \item[(i)]  $\{cm_\rho^i\}_{i=1}^{N_\rho}$ and $\{cm_\eta^i\}_{i=1}^{N_\eta}$ are stationary solutions of the purely non-local particle system, that is, for $i=1,2,\cdots,N_l,$ for  $l,h \in \{\rho,\eta\}$ and $l\neq h$,
\begin{equation}\label{bli}
    B_{l}^i = \sum_{j=1}^{N_l} S'_l( cm_l^i - cm_l^j ) z_l^j+  \alpha_l\sum_{j=1}^{N_h} K'( cm_l^i - cm_h^j ) z_h^j=0,
\end{equation}
\item[(ii)] the following quantities 
\begin{equation}\label{dli}
 D_{l}^i = - \sum_{j=1}^{N_l} S''_l(cm_l^i - cm_l^j) z_l^j -\alpha_l \sum_{j=1}^{N_h}  K''(cm_l^i - cm_h^j ) z_h^j, 
\end{equation}
are strictly positive, for all $i=1,2,\cdots,N_l$,  $l,h \in \{\rho,\eta\}$ and $l\neq h$.
\end{itemize}

Then, there exists
a constant $d_0$ such that for all $d\in (0, d_0)$ the stationary equation \eqref{stationary system1} admits a unique solution in the sense of Definition \ref{def:multi_bump} of the form
\[
    \rho(x) = \sum_{i=1}^{N_\rho}\rho^i(x)\mathds{1}_{I_\rho^i}(x)\quad\mbox{ and }\quad \eta (x) = \sum_{h=1}^{N_\eta}\eta^h(x)\mathds{1}_{I_\eta^h}(x)
\]
where 
\begin{itemize}
\item each interval $I_l^i$ is symmetric around $cm_l^i$ for all  $i=1,2,\cdots,N_l$,  $l \in \{\rho,\eta\}$,
\item $\rho^i$ and $\eta^j$ are $C^1$, non-negative and even w.r.t the centres of $I_\rho^i$ and $I_\eta^j$ respectively, with masses $z_\rho^i$ and  $z_\eta^j$, for $i=1,...,N_\rho$ and $j=1,...,N_\eta$,
\item the solutions $\rho$ and $\eta$ have fixed masses 
\[ z_\rho=\sum_{i=1}^{N_\rho} z_\rho^i \mbox { and } z_\eta = \sum_{i=1}^{N_\eta} z_\eta^i,\]
respectively.
\end{itemize}
\end{thm}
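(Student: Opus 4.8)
The plan is to recast the stationary problem as the vanishing of a nonlinear operator depending smoothly on the diffusion parameter $d$, to identify an explicit solution in the singular limit $d\to 0$, and to continue it to small $d>0$ via the implicit function theorem. First I would integrate the stationary system once: since $\rho,\eta$ are compactly supported and vanish at the endpoints of each component of their support, the flux $\rho(d\rho - S_\rho*\rho - K*\eta)_x$ is identically zero, so on each interval $I_\rho^i$ one has $d\rho - S_\rho*\rho - K*\eta \equiv \lambda_\rho^i$ and on $I_\eta^h$ one has $d\eta - S_\eta*\eta + \alpha K*\rho \equiv \lambda_\eta^h$, for Lagrange-type constants $\lambda_l^i$ enforcing the prescribed masses $z_l^i$. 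Thus, on its support, each predator bump is the nonnegative solution of
\[
\rho^i(x)=\tfrac{1}{d}\big(\lambda_\rho^i+(S_\rho*\rho)(x)+(K*\eta)(x)\big)_+,
\]
and symmetrically for $\eta^h$, which I would take as the defining fixed-point relation.

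The second step is the rescaling that desingularizes the limit $d\to0$. Writing $x=cm_l^i+R_l^i\xi$ with $\xi\in[-1,1]$ and $\rho^i=\tfrac{z_\rho^i}{R_\rho^i}\,u_\rho^i(\xi)$, so that $\int_{-1}^1 u_l^i=1$ encodes the mass constraint automatically, a Taylor expansion of the kernels on the shrinking supports shows that the leading-order second derivative of each profile is governed by $-D_l^i$, the curvature appearing in hypothesis (ii). Hence the natural scaling is $R_l^i\sim d^{1/3}$, and in rescaled variables each profile converges, as $d\to0$, to the universal Barenblatt parabola $u_\star(\xi)=\tfrac{3}{4}(1-\xi^2)$, with half-width $R_l^i=(3 d z_l^i/(2D_l^i))^{1/3}$; positivity of this limiting profile is guaranteed precisely by $D_l^i>0$. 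I would then collect all rescaled profiles, half-widths and multipliers into a single unknown living in a product Banach space of even $C^1$ functions (even about $cm_l^i$) times finite-dimensional vectors, and define $\mathcal{F}(d,\cdot)$ to be the residual of the rescaled fixed-point relation together with the mass normalizations, arranging the construction so that $\mathcal{F}$ extends to a $C^1$ map up to $d=0$ whose zero at $d=0$ is the explicit parabolic configuration just described.

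The heart of the argument is the invertibility of the partial Fréchet derivative $D\mathcal{F}$ at the limit point. Here I would exploit that, after rescaling, the inter-bump and inter-species couplings enter only through smooth, slowly varying external fields and are therefore of lower order in $R_l^i\sim d^{1/3}$: at $d=0$ the linearization block-diagonalizes over the $N_\rho+N_\eta$ bumps, and each diagonal block is the linearization of a single Barenblatt profile, which is an isomorphism exactly when $D_l^i>0$. Hypothesis (i), the particle-equilibrium condition $B_l^i=0$, is what makes the centred even ansatz consistent: it forces the odd (drift) part of the external field at each $cm_l^i$ to vanish at leading order, so that no bump is pushed off its prescribed centre and the even-profile formulation closes. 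With the linearization invertible, the implicit function theorem yields a unique branch $(\rho_d,\eta_d)$ for $d\in(0,d_0)$; since the supports shrink to the distinct points $cm_l^i$, for $d_0$ small they are pairwise disjoint, and the remaining qualitative properties ($C^1$ regularity, nonnegativity, approximate evenness, and the mass identities) are read off from the fixed-point relation and the rescaling.

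I expect the main obstacle to be precisely this invertibility step, for two intertwined reasons. First, the equation degenerates as $d\to0$ because the leading elliptic term $d\rho_{xx}$ vanishes, so the whole scheme hinges on the $d^{1/3}$ rescaling that converts the singular limit into a nondegenerate reference problem; obtaining a $C^1$, not merely continuous, dependence of $\mathcal{F}$ on $d$ up to the boundary $d=0$ will require careful estimates on the rescaled convolutions. Second, because the system is non-symmetrizable, the cross terms $K*\eta$ and $\alpha K*\rho$ destroy any gradient-flow or variational structure, so invertibility cannot be read off from coercivity; I would instead have to show that the off-diagonal inter-species entries of $D\mathcal{F}$ remain genuinely subdominant uniformly as $d\to0$, so that the dominant block-diagonal part governed by the strictly positive $D_l^i$ controls the inverse.
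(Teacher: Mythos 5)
Your overall strategy coincides with the paper's: the $d^{1/3}$ rescaling, the identification of the limiting profiles as Barenblatt parabolas of half-width $(3dz_l^i/(2D_l^i))^{1/3}$, the role of condition (i) ($B_l^i=0$) in cancelling the singular $\delta^{-1}$ drift term so that the centred, even ansatz closes, and the role of condition (ii) ($D_l^i>0$) in making the linearization at $d=0$ block-diagonal and invertible are exactly the ingredients of Section~\ref{sec:implicit}. The genuine difference is the choice of coordinates. The paper works with the pseudo-inverse functions on the \emph{fixed} mass intervals $J_l^i=[\hat z_l^i,\tilde z_l^i]$, writes $u_l^i=cm_l^i+\delta v_l^i$ with $d=\delta^3$, integrates the rescaled equation twice to build the functionals $\mathcal{F}_l^i[\,\cdot\,;\delta]$ together with the scalar constraints $\Lambda_l^i$, and applies the implicit function theorem between the weighted spaces $\Omega_{1/2}$ and $\Omega_1$. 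You instead stay in Eulerian variables and encode each bump through the relation $\rho^i=\tfrac1d\big(\lambda_\rho^i+S_\rho*\rho+K*\eta\big)_+$ on an unknown interval.

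That Eulerian implementation is where the gap sits, and you have in fact named it yourself without resolving it. The map $s\mapsto s_+$ is Lipschitz but not $C^1$ precisely at the free boundary, and the supports $I_l^i$ (equivalently the half-widths $R_l^i$) are themselves unknowns, so the residual $\mathcal{F}(d,\cdot)$ you propose is not obviously Fr\'echet differentiable in the profile and support variables, let alone $C^1$ up to $d=0$; moreover the limiting parabola degenerates linearly at the edge of its support, so the linearized operator cannot be boundedly inverted in an unweighted $C^1$ or $L^\infty$ setting. The paper's pseudo-inverse formulation is designed to remove both difficulties at once: in mass coordinates there is no free boundary, and the edge degeneracy is absorbed into the weighted norms $|||\cdot|||_{\gamma}$ carrying the factor $(\tilde z_l^i-z)^{-1/2}$, which is exactly what Lemma~\ref{lemm:1} and Lemma~\ref{lemm:2} use to show that $D\mathcal{T}[\omega_0;0]:\Omega_{1/2}\to\Omega_1$ is an isomorphism approached by $D\mathcal{T}[\omega_0;\delta]$ as $\delta\to0$. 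Unless you either pass to mass coordinates or introduce an equivalent device (unknown endpoints as explicit finite-dimensional variables plus a function space weighted to match the vanishing of the profile at the boundary), the implicit function theorem cannot be invoked as stated, so the proposal as written does not close. The remaining structural claims you make (block-diagonalization of the linearization over the $N_\rho+N_\eta$ bumps, subdominance of the cross-species couplings as $d\to0$) are correct and correspond to the explicit computation of $D\mathcal{T}[\omega;0]$ in the paper.
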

The paper is structured as follows. In Section \ref{sec:preliminaries} we recall the basics notions on optimal transport and we introduce the $p$-Wasserstein distances in spaces of probability measures. Then, we  recall the strategy for proving existence to systems of the form \eqref{eq:continuum_intro}. The remaining part of the Section is devoted to the preliminary existence analysis of steady states  via the Krein-Rutman theorem of two particular types of stationary solutions that we call \emph{mixed} and \emph{separated}. In 
Section \ref{sec:implicit}, existence and uniqueness results for multiple bumps stationary solutions are proved in case
of small diffusion coefficient using the implicit function theorem. We conclude the paper with Section \ref{sec:numerics}, complementing our results with numerical simulations that also show interesting stability issues of the stationary states, namely transitions between states and others effects such as travelling waves profiles.

\section{Preliminary results}\label{sec:preliminaries}
\subsection{Tools in Optimal Transport} 
We start collecting preliminary concepts on the Wasserstein distance.

Let $\mP(\R^n)$ be the space of probability measures on $\R^n$ and fix $p \in [1,+\infty)$. The space of probability measures with finite $p$-moment is defined by
\begin{equation*}
     \mP_{p}(\R^n)=\left\{\mu \in \mP(\R^n): m_{p}(\mu)=\int_{\R^n}\left|x\right|^{p}d\mu(x)<\infty \right\}.
\end{equation*}
For a measure $\mu\in\mP(\R^n)$ and a Borel map $T:\R^n\rightarrow\R^k$, denote with $T_{\#}\mu\in\mP(\R^n)$ the push-forward of $\mu$ through $T$, defined by
\begin{equation*}
    \int_{\R^k}f(y)dT_{\#}\mu(y)=\int_{\R^n}f(T(x))d\mu(x) \qquad\mbox{for all $f$ Borel functions on $\R^k$.}
\end{equation*}
We endow the space $\mP_{p}(\R^n)$ with the Wasserstein distance, see for instance \cite{AGS,Sant,villani}
\begin{equation}\label{def:wasserstein}
     W_{p}^{p}(\mu,\nu)=\inf_{\gamma\in \Gamma(\mu,\nu)}\left\{ \int_{\R^n\times\R^n}|x-y|^{p}d\gamma(x,y)\right\},
\end{equation}
where $\Gamma(\mu_{1},\mu_{2})$ is the class of transport plans between $\mu$ and $\nu$, that is the class of measures $\gamma \in\mP(\R^n\times\R^n)$ such that, denoting by $\pi^{i}$ the projection operator on the $i$-th component of the product space, the marginality condition $\pi^{i}_{\#}\gamma=\mu_{i} $ $i=1,2$ is satisfied. 

Since we are working in a `multi-species' structure, we consider the product space $ \mP_{p}(\R^n)\times \mP_{p}(\R^n)$ endowed with a product structure. In the following we shall use bold symbols to denote elements in a product space.
For a $p\in [1,+\infty]$, we use the notation
\begin{equation*}
    \mathcal{W}_{p}^{p}(\bar{\mu},\bar{\nu})=W_{p}^{p}(\mu_{1},\nu_{1})+W_{p}^{p}(\mu_{2},\nu_{2}),
\end{equation*}
with $\bar{\mu}=(\mu_1,\mu_2),\bar{\nu}=(\nu_1,\nu_2) \in \mP_{p}(\R^n)\times \mP_{p}(\R^n)$.
In the one-dimensional case, given $\mu\in \mP (\R)$, we introduce the pseudo-inverse variable $u_\mu \in L^1([0,1];\R)$ as
\begin{equation}\label{eq:pseudoinverse}
u_\mu(z) \doteq \inf \bigl\{ x \in \R \colon \mu((-\infty,x]) > z \bigr\}, \quad z\in [0,1],
\end{equation}
see \cite{CarTos}.
\subsection{Weak solutions for the time-dependent system}
In the Introduction we already mention that the well-posedness of \eqref{main-equation} can be stated according to the results in \cite{CL1,DEF} in an arbitrary space dimension $n$. In these papers, the existence of weak solutions is provided using an implicit-explicit version of the Jordan-Kinderlehrer-Otto (JKO) scheme \cite{JKO,DFrFag}, that we sketch it in the following.  A key point in this approach is to associate to \eqref{main-equation} a \emph{relative energy functional}
\begin{align*}
\mF_{\left[\mu,\nu\right]}(\rho,\eta) & =\frac{d}{2}\int_{\R^n}\rho^2+\eta^2 dx - \frac{1}{2}\int_{\R^n}\rho S_\rho\ast \rho dx- \frac{1}{2}\int_{\R^n}\eta S_\eta\ast \eta dx\\
&- \int_{\R^n}\rho K \ast \mu dx+ \alpha\int_{\R^n}\eta K \ast \nu dx,
\end{align*}
for a fixed reference couple of measures $(\mu,\nu)$. We state our definition of weak measure solution for \eqref{main-equation}, in the space $\mP_2(\Rd)^2$.

\begin{defn}\label{defweaksolution}
A curve $\bar{\mu}=(\rho(\cdot),\eta(\cdot)):[0,+\infty)\longrightarrow\mpd^2$ is a weak solution to \eqref{main-equation} if 
\begin{itemize}
\item[(i)] $\rho,\,\eta\in L^{2}([0,T]\times \R^n)$ for all $T>0$, and $\nabla \rho,\,\nabla \eta \in L^2([0,+\infty)\times \R^n)$ for $i=1,2$,
\item[(ii)] for almost every $t\in [0,+\infty)$ and for all $\phi,\varphi\in C_c^{\infty}(\Rd)$, we have
\begin{align*}
  \ddt\int_\Rd\phi\rho dx & =-d\int_\Rd\rho\nabla\rho\cdot\nabla\phi\,dx+\int_\Rd\rho\left(\nabla S_\rho \ast \rho + \nabla K\ast\eta\right)\nabla \phi \,dx, \\ 
\ddt\int_\Rd\varphi\eta dx & =-d\int_\Rd\eta\nabla\eta\cdot\nabla\varphi\,dx+\int_\Rd\eta\left(\nabla S_\eta \ast \eta -\alpha \nabla K\ast\rho\right)\nabla \phi \,dx. 
\end{align*}

\end{itemize}
\end{defn}

\begin{thm}\label{exthm}
Assume that (A1)-(A3) are satisfied. 
Let $\bar{\mu}_0=(\rho_{1,0},\rho_{2,0})\in \mpd^2$ such that
\[\mF_{[\bar{\mu}_0]}\left(\bar{\mu}_0\right)<+\infty.\]
Then, there exists a weak solution to \eqref{main-equation} in the sense of Definition \ref{defweaksolution}.
\end{thm}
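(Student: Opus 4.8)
The strategy, as anticipated in the text preceding the statement, is to construct solutions through a semi-implicit (implicit--explicit) version of the JKO minimizing-movement scheme, which is the natural device to bypass the lack of a genuine gradient-flow structure for this non-symmetrizable system. I would fix a time step $\tau>0$, set $(\rho_\tau^0,\eta_\tau^0)=\bar\mu_0$, and define recursively
\[
(\rho_\tau^{k+1},\eta_\tau^{k+1})\in\argmin_{(\rho,\eta)}\left\{\frac{1}{2\tau}\mathcal{W}_2^2\big((\rho,\eta),(\rho_\tau^k,\eta_\tau^k)\big)+\mF_{[\rho_\tau^k,\eta_\tau^k]}(\rho,\eta)\right\}.
\]
The essential point is that in $\mF$ the cross-interaction terms are frozen at the previous iterate $(\rho_\tau^k,\eta_\tau^k)$, while the diffusive and self-interaction parts are treated implicitly; this decoupling restores a variational structure at each step and is exactly what makes the scheme well-defined despite the antisymmetric predator--prey coupling.

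The first block of work is to show each minimization admits a solution. Under (A3) the kernels are bounded, so the self- and cross-interaction contributions are controlled by quantities of the form $\tfrac12\|S_l\|_\infty z_l^2$ and $\|K\|_\infty z_\rho z_\eta$; together with the nonnegative diffusion term $\tfrac{d}{2}\int\rho^2+\eta^2$, the functional is bounded below on the mass-constrained set, and the Wasserstein penalization enforces tightness. Narrow lower semicontinuity of $\mF$ (convexity of the quadratic diffusion, continuity of the convolution terms via (A1)) then yields minimizers by the direct method. Computing the first variation---perturbing the minimizer by the flow of a smooth vector field and invoking the standard first variation of $\mathcal{W}_2^2$---produces the discrete Euler--Lagrange identity, a one-step weak formulation relating the optimal displacement to $\nabla(d\rho-S_\rho*\rho-K*\eta)$ and its $\eta$-counterpart; note that freezing the cross terms reproduces $K*\eta$ and $\alpha K*\rho$ in the limit, so the scheme is consistent with Definition \ref{defweaksolution}.

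Next I would establish the $\tau$-uniform a priori estimates. Testing the scheme against the previous iterate gives the one-step energy inequality; because the reference measures shift from step to step, I would control the resulting discrepancy using that the cross terms are Lipschitz in $(\mu,\nu)$ with respect to $\mathcal{W}_2$ (thanks to $\nabla K\in L^\infty$ from (A1)), and sum over $k$ to obtain $\sum_k\mathcal{W}_2^2\big((\rho_\tau^{k+1},\eta_\tau^{k+1}),(\rho_\tau^k,\eta_\tau^k)\big)\le C\tau$ together with a uniform bound on the total energy. The latter, through the quadratic diffusion term, furnishes a uniform $L^\infty_t L^2_x$ bound on $(\rho_\tau,\eta_\tau)$, while a flow-interchange (entropy-dissipation) estimate against the Boltzmann entropy---which for quadratic diffusion produces exactly a control on $\int\!\int|\nabla\rho_\tau|^2$---upgrades this to a uniform $L^2_t H^1_x$ bound. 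The squared-distance summability provides $\mathcal{W}_2$-equicontinuity in time of the piecewise-constant interpolants.

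Finally I would pass to the limit $\tau\to0$. Equicontinuity plus tightness gives, via a generalized Arzel\`a--Ascoli in $\mpd^2$, narrow convergence at each time along a subsequence; combined with the $L^2_t H^1_x$ bound and an Aubin--Lions--Simon argument this yields strong $L^2$ convergence of $(\rho_\tau,\eta_\tau)$, which is precisely what is needed to pass to the limit in the nonlinear diffusion term $\int\rho\nabla\rho\cdot\nabla\phi=\tfrac12\int\nabla(\rho^2)\cdot\nabla\phi$. The interaction terms converge because $\nabla S_l$ and $\nabla K$ are bounded and continuous, and the frozen reference measures converge to the limit by the time-equicontinuity, so the explicit cross terms become implicit in the limit, identifying $(\rho,\eta)$ with a weak solution. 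I expect the principal obstacle to be precisely these $\tau$-uniform estimates for the semi-implicit scheme: since the minimized functional itself depends on the previous iterate, the clean telescoping of the classical JKO energy dissipation must be replaced by an argument absorbing the error made in freezing the cross-interaction, and securing the spatial $H^1$-regularity required for the strong compactness of the nonlinear diffusion term is the most delicate point.
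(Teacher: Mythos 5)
Your proposal follows essentially the same route as the paper: a semi-implicit JKO scheme with the cross-interaction terms frozen at the previous iterate, existence of the one-step minimizers by the direct method, $\tau$-uniform energy and squared-distance estimates, an $L^2_t H^1_x$ bound via the flow-interchange lemma, strong compactness via the extended Aubin--Lions lemma, and identification of the limit by perturbing the minimizer along the flow of a smooth vector field. The paper itself only sketches these steps and defers the details to the cited works of Carlier--Laborde and Di Francesco--Esposito--Fagioli, so your outline matches both the strategy and the key technical ingredients.
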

As already mentioned the proof of Theorem \ref{exthm} is a special case of the results in \cite{CL1,DEF} and consists in the following main steps:
\begin{enumerate}
    \item Let $\tau>0$ be a fixed time step and let $\bar{\mu}_0=\in\mpd^2$ be a fixed initial datum such that $\mF_{[\bar{\mu}_0]}\left(\bar{\mu}_0\right)<+\infty$. Define a sequence $\left\{\bar{\mu}_\tau^k\right\}_{k\in\mathbb{N}}$ recursively: $\bar{\mu}_\tau^0=\bar{\mu}_0$ and, for a given $\bar{\mu}_\tau^k\in\mpd^2$ with $n\geq 0$, we choose $\bar{\mu}_\tau^{k+1}$ as follows:
\begin{equation}\label{jko}
\bar{\mu}_\tau^{k+1}\in\argmin_{\bar{\mu}\in\mpd^2}\left\{\frac{1}{2\tau}\mathcal{W}_2^2(\bar{\mu}_\tau^k,\bar{\mu})+\mF_{[\bar{\mu}_\tau^k]}(\bar{\mu})\right\}.
\end{equation}
Let $N:=\left[\frac{T}{\tau}\right]$, set
$$
\bar{\mu}_\tau(t)=(\rho_\tau(t),\eta_\tau(t))=\bar{\mu}_\tau^k \qquad t\in((k-1)\tau,k\tau],
$$
with $\bar{\mu}_\tau^k$ defined in \eqref{jko}. 
\item There exists an absolutely continuous curve $\tilde{\bar{\mu}}: [0,T]\rightarrow\mpd^2$ such that the piecewise constant interpolation $\bar{\mu}_\tau$ admits a sub-sequence $\bar{\mu}_{\tau_h}$ narrowly converging to $\tilde{\bar{\mu}}$ uniformly in $t\in[0,T]$ as $k\rightarrow +\infty$. This is a standard result coming from the minimising condition \eqref{jko}.
\item There exist a constant $C>0$ such that
\begin{equation}\label{h1bound}
\int_0^T\left[||\rho_\tau(t,\cdot)||_{H^1(\Rd)}^2+||\eta_\tau(t,\cdot)^{\frac{m_2}{2}}||_{H^1(\Rd)}^2 \right]\,dt\le C(T,\bar{\mu}_0),
\end{equation}
and the sequence $\bar{\mu}_{\tau_{h}}:[0,+\infty[\longrightarrow\mpd^2$ converges to $\tilde{\bar{\mu}}$ strongly in 
$$L^{2}((0,T)\times\R^n)\times L^{2}((0,T)\times\R^n),$$ 
for every $T>0$. The estimate in \eqref{h1bound} can be deduced by using the so called \emph{Flow-interchange Lemma} introduced in \cite{MMCS}, see also \cite{DFM}. In order to deduce the strong convergence we use the extended version of the Aubin-Lions Lemma in \cite{RS}.
\item The approximating sequence $\bar{\mu}_{\tau_h}$ converges to a weak solution $\tilde{\bar{\mu}}$ to \eqref{main-equation}. This can be showed considering two consecutive steps in the semi-implicit JKO scheme \eqref{jko}, i.e. $\bar{\mu}_\tau^k$, $\bar{\mu}_\tau^{k+1}$, and perturbing  in the following way  
\begin{equation}\label{eq:perturbation}
 \bar{\mu}^\epsilon = (\rho^\epsilon,\eta^\epsilon)=(P_\#^\epsilon\rho_\tau^{k+1},\eta_\tau^k),
\end{equation}
where $P^\epsilon=\id+\epsilon\zeta$, for some $\zeta\in C_c^\infty(\Rd;\Rd)$ and $\epsilon\ge0$. From the minimizing property of $\bar{\mu}_\tau^{k+1}$ we have 
\begin{equation}\label{optimality}
0\leq\frac{1}{2\tau}\left[\mathcal{W}_2^2(\bar{\mu}_\tau^{k+1},\bar{\mu}^\epsilon)- \mathcal{W}_2^2(\bar{\mu}_\tau^{k}, \bar{\mu}_\tau^{k+1})\right]+\mF_{[\bar{\mu}_\tau^{k}]}(\bar{\mu}^\epsilon)-\mF_{[\bar{\mu}_\tau^{k}]}(\bar{\mu}^{\epsilon}).
\end{equation}
After some manipulations, sending first $\epsilon\to 0$ and then $\tau\to 0$ the inequality \eqref{optimality} leads to the first weak formulation in Definition \ref{defweaksolution}. Perturbing now on $\eta$ and repeating the same procedure we get the required convergence.
\end{enumerate}

\subsection{Stationary states for purely non-local systems}
The existence of weak solutions to the purely non-local systems, i.e., 
\begin{equation}\label{nonlocal-equation-main}
\begin{dcases}
\partial_{t}\rho = \mbox{div}\big( \rho \nabla\big( S_\rho*\rho+K_\rho*\eta \big)\big),
\\
\partial_{t}\eta = \mbox{div}\big( \eta \nabla\big( S_\eta*\eta+K_\eta*\rho \big)\big),
\end{dcases}
\end{equation} 
with generic cross-interaction kernels $K_\rho$ and $K_\eta$ is investigated in \cite{DFrFag}, whereas studies on the shape of stationary states can be found in \cite{Cica,EvKol}.  Concerning the predator-prey modelling and patterns formation, in \cite{chen_kolokolnikov,tomkins_kolokolnikov} a minimal version of \eqref{eq:particle_intro} has been considered with only one predator and arbitrarily many prey subject to (different) singular potentials. This model induces the formation of nontrivial patterns in some way to prevent the action of the predators.
In \cite{DFrFag2} the authors study existence and stability of stationary states for the purely aggregative version of system \eqref{main-equation}, namely equation \eqref{main-equation} with  $d=0$, 
\begin{equation}\label{nonlocal-equation}
\begin{dcases}
\partial_{t}\rho = \mbox{div}\big( \rho \nabla\big( S_\rho*\rho+K*\eta \big)\big),
\\
\partial_{t}\eta = \mbox{div}\big( \eta \nabla\big( S_\eta*\eta-\alpha K*\rho \big)\big),
\end{dcases}
\end{equation} 
and its relation with the particle system
\begin{equation}\label{eq:pred_pred_particle}
  \begin{cases}
  \displaystyle
  \dot{X_i}(t)= -\sum_{k=1}^N m_X^k \nabla S_\rho(X_i(t)-X_k(t)) -\sum_{k=1}^M m_Y^k \nabla K(X_i(t)-Y_k(t)), &  \\
  \displaystyle
  \dot{Y_j}(t)= -\sum_{k=1}^M m_Y^k \nabla S_\eta(Y_j(t)-Y_k(t)) +\alpha\sum_{k=1}^N m_X^k \nabla K(Y_j(t)-X_k(t)). &
  \end{cases}
\end{equation}
It is proved that stationary states of system \eqref{nonlocal-equation} are linear combinations of Dirac's deltas, namely $\bar{\rho},\bar{\eta}\in \mP(\R^n)$, with
\begin{equation}\label{delta_sum}		
 \left(\bar{\mu}_{1},\bar{\mu}_{2}\right)=\left(\sum_{k=1}^{N}m_X^{k}\delta_{\bar{X}_{k}}(x),\sum_{h=1}^{M}m_Y^{h}\delta_{\bar{Y}_{h}}(x)\right).
\end{equation}
where $\left\{\bar{X}_{k}\right\}_k$, $\left\{\bar{Y}_{h}\right\}_h$ are stationary solutions of system \eqref{eq:pred_pred_particle}, i.e.,
\begin{equation}\label{eq:pred_particle_steady}
\begin{cases}
\displaystyle
  0 = \sum_{k=1}^N \nabla S_\rho(\bar{X}_k-\bar{X}_i)m_X^k + \sum_{h=1}^M \nabla K(\bar{Y}_h-\bar{X}_i)m_Y^h & \\
  \displaystyle
   0 = \sum_{h=1}^M \nabla S_\eta(\bar{Y}_h-\bar{Y}_j)m_Y^h -\alpha \sum_{k=1}^N \nabla K(\bar{X}_k-\bar{Y}_j)m_X^k & \\
\end{cases}.
\end{equation}
for $i=1,...,N$ and $j=1,...,M$, see also \cite{felrao1,felrao2} for a symilar result in the one-species case. As pointed out in \cite{DFrFag2}, system \eqref{eq:pred_particle_steady} is not enough to determine a unique steady state, since the linear combination of the first $N$ equations, weighted with  $\alpha m_X^i$, and the final $M$ equations weighted with coefficients $-m_Y^j$ get the trivial identity $0=0$. System \eqref{eq:pred_particle_steady} should be coupled with the quantity
\begin{equation}\label{c_alpha}
    C_\alpha=\alpha\sum_{i=1}^{N}m_{X}^{i}X_{i}-\sum_{j=1}^{M}m_{Y}^{j}Y_{j}
\end{equation}
that is a conserved quantities, and therefore one would like to produce a unique steady state once the quantity $C_\alpha$ is prescribed. Solutions to system \eqref{eq:pred_particle_steady} will play a crucial role in the proof of the main Theorem \ref{main_thm}.

\subsection{Existence of some stationary states via Krein-Rutman Theorem} We now prove the existence of two possible shapes of steady states, that will be prototype examples for the general case. The first one is what we can call \emph{mixed steady state}, that identifies the case in which the predators can catch the prey, see \figurename~\ref{fig:mixed_state}.
\begin{figure}[htbp]
\begin{tikzpicture}
\draw[->] (-3,0) -- (6,0);
\draw[->] (1.5,-0.5) -- (1.5,4.2) ;
\draw[scale=1,domain=-0.3:3.3,smooth,variable=\x,red] plot ({\x},{(3.25-(\x-1.5)*(\x-1.5)))});
\draw[scale=1,domain=-1.25:4.25,smooth,variable=\x,blue] plot ({\x},{(1.5-0.2*(\x-1.5)*(\x-1.5)))});

\node[scale=1] at (-0.3,-0.3) {$L_\rho$};
\node[scale=1] at (-1.25,-0.3) {$L_\eta$};
\node[scale=1] at (3.3,-0.3) {$R_\rho$};
\node[scale=1] at (4.25,-0.3) {$R_\eta$};

\node[red,scale=1] at (1.8/2,3.1){$\rho$};
\node[blue,scale=1] at (2.6/2,1.7){$\eta$};

\end{tikzpicture}
\caption{Example of mixed stationary state. Note that by symmetry $L_\rho=-R_\rho$ and $L_\eta=-R_\eta$.}
\label{fig:mixed_state}
\end{figure}
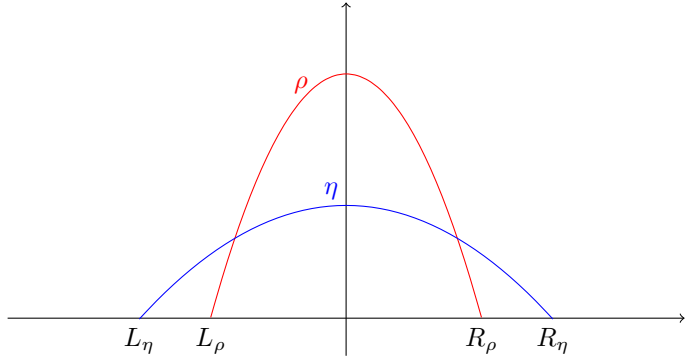

The other steady state, called \emph{separated steady state}, corresponds to the case where the prey win, namely the corresponding densities are supported on disjoint intervals and a vacuum region is formed around predators, see \figurename~\ref{fig:separated_state}.  

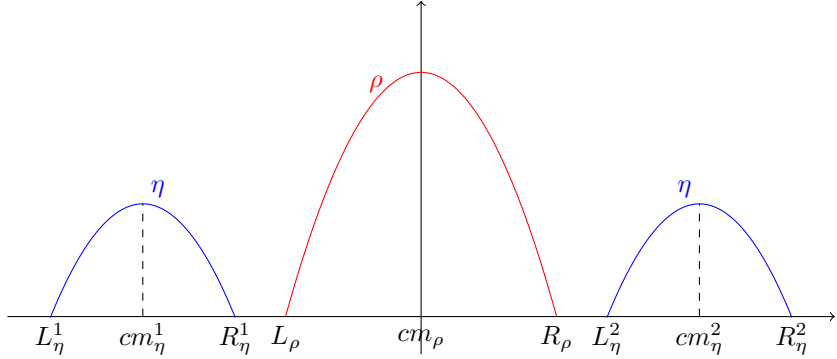
\begin{figure}[htbp]
\begin{tikzpicture}
\draw[->] (-4,0) -- (7,0);
\draw[->] (1.5,-0.5) -- (1.5,4.2) ;
\draw[scale=1,domain=-0.3:3.3,smooth,variable=\x,red] plot ({\x},{(3.25-(\x-1.5)*(\x-1.5)))});
\draw[scale=1,domain=-3.43:-0.97,smooth,variable=\x,blue] plot ({\x},{(1.5-(\x+2.2)*(\x+2.2)))});
\draw[scale=1,domain=3.97:6.43,smooth,variable=\x,blue] plot ({\x},{(1.5-(\x-5.2)*(\x-5.2)))});

\node[scale=1] at (-0.3,-0.3) {$L_\rho$};
\node[scale=1] at (1.5,-0.3) {$cm_\rho$};
\node[scale=1] at (3.3,-0.3) {$R_\rho$};
\node[scale=1] at (-3.43,-0.3) {$L_\eta^1$};
\node[scale=1] at (-2.2,-0.3) {$cm_\eta^1$};
\node[scale=1] at (-0.97,-0.3) {$R_\eta^1$};
\node[scale=1] at (3.97,-0.3) {$L_\eta^2$};
\node[scale=1] at (5.2,-0.3) {$cm_\eta^2$};
\node[scale=1] at (6.43,-0.3) {$R_\eta^2$};

\draw[dashed] (-2.2,0) -- (-2.2,1.5);
\draw[dashed] (5.2,0) -- (5.2,1.5);

\node[red,scale=1] at (1.8/2,3.1){$\rho$};
\node[blue,scale=1] at (-2,1.7){$\eta$};
\node[blue,scale=1] at (5,1.7){$\eta$};

\end{tikzpicture}
\caption{Example of separated stationary state. Assuming symmetry in both the densities we easily recover that $L_\rho=-R_\rho$ and $cm_\rho=0$, $L_\eta^1=-R_\eta^2$ and $cm_\eta^1=-cm_\eta^2$.}
\label{fig:separated_state}
\end{figure}

The proof of the existence of such steady states follows by using the strong version of the Krein-Rutman theorem, see \cite{Yu}.
\begin{thm}[Krein-Rutman]
\label{K-R}
Let $X$ be a Banach space, $K\subset X$ be a solid cone, such that
$\lambda K\subset K$ for all $\lambda\geq0$ and $K$ has a nonempty interior
$K^{o}$. Let $T$ be a compact linear operator on X, which is strongly positive with respect to $K$, i.e. $T[u]\in K^{o}$ if $u\in K\setminus\{0\}$. Then,
\begin{itemize}
\item [(i)] the spectral radius $r(T)$ is strictly positive and $r(T)$ is a simple eigenvalue with an eigenvector $v\in K^{o}$. There is no other eigenvalue with a corresponding eigenvector $v\in K$.
\item [(ii)] $|\lambda|<r(T)$ for all other eigenvalues $\lambda\neq r(T)$.
\end{itemize}
\end{thm}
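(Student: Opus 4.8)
The plan is to deduce the theorem from the Riesz--Schauder spectral theory of the compact operator $T$, using the order induced by $K$ and by the closed dual cone $K^{*}=\{\phi\in X^{*}:\langle\phi,u\rangle\ge0\text{ for all }u\in K\}$. Since $T$ is compact, every nonzero spectral value is an eigenvalue of finite multiplicity, the spectrum can accumulate only at $0$, and $r(T)$ is attained on $\sigma(T)$. First I would check $r(T)>0$: fixing $u_{0}\in K^{o}$, strong positivity gives $Tu_{0}\in K^{o}$, hence $Tu_{0}-c\,u_{0}\in K$ for some $c>0$; since $T$ preserves $K$ this yields $T^{n}u_{0}-c^{n}u_{0}\in K$ for all $n$, and pairing with any $\phi\in K^{*}$ with $\langle\phi,u_{0}\rangle>0$ gives $\|T^{n}\|\gtrsim c^{n}$, so $r(T)=\lim\|T^{n}\|^{1/n}\ge c>0$.

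Next I would construct the eigenvector. For $\lambda>r(T)$ the resolvent $R(\lambda)=(\lambda I-T)^{-1}=\sum_{k\ge0}\lambda^{-k-1}T^{k}$ is a positive operator; since $r(T)$ is a pole of the resolvent, $\|R(\lambda)u_{0}\|\to\infty$ as $\lambda\downarrow r(T)$ for a suitable $u_{0}\in K^{o}$. Setting $v_{\lambda}=R(\lambda)u_{0}/\|R(\lambda)u_{0}\|\in K$ gives $(\lambda I-T)v_{\lambda}\to0$, and compactness of $T$ lets me extract $Tv_{\lambda}\to w$ along a subsequence, whence $v_{\lambda}\to w/r(T)=:v$ with $\|v\|=1$, $v\in K$ and $Tv=r(T)v$; strong positivity then upgrades $v=Tv/r(T)\in K^{o}$. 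Running the same resolvent-limit argument for the compact operator $T^{*}$ on the closed cone $K^{*}$ produces a positive eigenfunctional $\phi\in K^{*}\setminus\{0\}$ with $T^{*}\phi=r(T)\phi$, and a short separation argument (using $v\in K^{o}$) shows $\langle\phi,u\rangle>0$ for every $u\in K^{o}$, in particular $\langle\phi,v\rangle>0$.

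I would then settle all the simplicity claims of (i). For \emph{geometric} simplicity, let $w$ be a real eigenvector at $r(T)$; since $v\in K^{o}$ the set $\{s\in\mathbb{R}:v-sw\in K\}$ is a proper closed interval with $0$ in its interior (if it were all of $\mathbb{R}$ then $\pm w\in K$, forcing $w=0$), so it has a finite endpoint $s^{*}$ with $v-s^{*}w\in\partial K$. If $v-s^{*}w\ne0$ it is an eigenvector in $K\setminus\{0\}$, hence in $K^{o}$ by strong positivity, contradicting $v-s^{*}w\in\partial K$; thus $v-s^{*}w=0$ and the eigenspace is one–dimensional (splitting into real and imaginary parts for complex eigenvectors). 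That \emph{no other eigenvalue} has an eigenvector in $K$ follows from $\phi$: if $Tw=\mu w$ with $w\in K\setminus\{0\}$, then $Tw\in K^{o}$ forces $\mu>0$ and $w\in K^{o}$, so $\langle\phi,w\rangle>0$ and $\mu\langle\phi,w\rangle=\langle\phi,Tw\rangle=\langle T^{*}\phi,w\rangle=r(T)\langle\phi,w\rangle$ gives $\mu=r(T)$. For \emph{algebraic} simplicity I rule out a Jordan chain: a generalized eigenvector $u$ would satisfy $(T-r(T)I)u=c\,v$ with $c\ne0$, yet pairing with $\phi$ gives $0=\langle\phi,(T-r(T)I)u\rangle=c\langle\phi,v\rangle\ne0$, a contradiction.

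The main obstacle is part (ii), the strict dominance $|\lambda|<r(T)=:r$ for peripheral eigenvalues $\lambda\ne r$. The case $\lambda=-r$ I would dispatch by observing that $T^{2}$ is again compact and strongly positive, so by part (i) applied to $T^{2}$ the value $r^{2}=r(T^{2})$ is a simple eigenvalue with eigenspace $\mathbb{R}v$; any eigenvector of $T$ for $-r$ solves $T^{2}z=r^{2}z$, hence is a multiple of $v$, which is incompatible with $Tz=-rz$. For a genuinely complex $\lambda=re^{i\theta}$ write $z=x+iy$ with $x,y$ real and independent; the plane $V=\langle x,y\rangle$ is $T$-invariant and, for a suitable orientation, $Tw_{\beta}=r\,w_{\beta+\theta}$ where $w_{\beta}=\cos\beta\,x+\sin\beta\,y$. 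The crux is to show $V\cap K=\{0\}$: otherwise $V\cap K$ is a closed convex planar sector of some angle in $(0,\pi]$ that strong positivity maps \emph{into its own interior} under a nonzero rotation, which is impossible. Granting this, the maximal scaling function $\tau(\beta)=\sup\{t\ge0:v-t\,w_{\beta}\in K\}$ is finite, strictly positive and continuous on the circle (using $v\in K^{o}$), and each $v-\tau(\beta)w_{\beta}\in\partial K$ is nonzero (since $v\notin V$, as $r$ is not an eigenvalue of $T|_{V}$). Strong positivity then gives $r\bigl(v-\tau(\beta)w_{\beta+\theta}\bigr)=T\bigl(v-\tau(\beta)w_{\beta}\bigr)\in K^{o}$, i.e. $\tau(\beta+\theta)>\tau(\beta)$ for every $\beta$; integrating this strict inequality over $[0,2\pi)$ and using shift-invariance of the integral yields $\int\tau<\int\tau$, the desired contradiction. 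I expect the delicate points to be the resolvent blow-up underlying the eigenvector construction and, above all, the planar sector argument securing $V\cap K=\{0\}$ together with the continuity of $\tau$, since this is exactly where strong positivity must be converted into strict spectral dominance.
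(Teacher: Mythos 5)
The paper itself contains no proof of Theorem \ref{K-R}: it is the classical strong Krein--Rutman theorem, quoted from Du's monograph \cite{Yu} and used as a black box in the propositions that follow, so your proposal can only be judged against the standard proofs. Its architecture is indeed the standard one, and most of the steps are correct: the iteration $T^nu_0-c^nu_0\in K$ giving $r(T)\geq c>0$; the bootstrap $v=Tv/r(T)\in K^{o}$; geometric simplicity via the extremal parameter $s^{*}$ with $v-s^{*}w\in\partial K$; algebraic simplicity and the exclusion of other eigenvalues with eigenvectors in $K$ via the dual eigenfunctional $\phi$; and part (ii), where the reduction of $\lambda=-r$ to $T^{2}$, the sector argument giving $V\cap K=\{0\}$ (a nonzero rotation cannot map a nontrivial proper closed convex planar cone into itself), and the integration of the strict inequality $\tau(\beta+\theta)>\tau(\beta)$ are all sound. (You do use, tacitly, that the cone is convex and proper, $K\cap(-K)=\{0\}$ --- e.g.\ when forcing $w=0$ and when classifying $V\cap K$ --- and Hahn--Banach separation to produce $\phi\in K^{*}\setminus\{0\}$; these belong to the standard definition of cone and should be stated.)

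The genuine gap is the single clause on which the whole construction rests: ``since $r(T)$ is a pole of the resolvent, $\|R(\lambda)u_0\|\to\infty$ as $\lambda\downarrow r(T)$''. This assumes that the real number $r(T)$ lies in $\sigma(T)$. Riesz--Schauder theory only guarantees that \emph{some} spectral value $\lambda_0$ with $|\lambda_0|=r(T)$ exists and is a pole; for a general compact operator $\lambda_0$ may be non-real and $r(T)$ itself need not be a spectral value at all (a finite-rank rotation with spectrum $\{ir,-ir,0\}$ already shows this). Proving that positivity forces $r(T)\in\sigma(T)$ is precisely the analytic core of the Krein--Rutman theorem, and your argument takes it for granted (the same unproved claim is reused for $T^{*}$); as written, the reasoning is circular exactly at the crucial point. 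The classical way to fill the hole fits directly into your framework: take the peripheral pole $\lambda_0$, of order $m$, with leading Laurent coefficient $A_{-m}\neq 0$; since a solid cone is generating and $K^{*}$ separates points (properness plus the bipolar theorem), choose $u\in K$ and $\phi\in K^{*}$ with $\langle\phi,A_{-m}u\rangle\neq0$, so that $|\langle\phi,R(t\lambda_0)u\rangle|\geq c\,(t-1)^{-m}$ as $t\downarrow1$; then the termwise positivity estimate $|\langle\phi,R(\mu)u\rangle|\leq\sum_{k}|\mu|^{-k-1}\langle\phi,T^{k}u\rangle=\langle\phi,R(|\mu|)u\rangle$ transfers this blow-up to the real axis, giving $\langle\phi,R(tr)u\rangle\to\infty$. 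This simultaneously shows $r(T)\in\sigma(T)$ and hands you the divergence $\|R(tr)u\|\to\infty$ needed to launch your compactness argument. Note that the transfer must be done at the level of single pairings, as above: upgrading the dual-cone estimate to an operator-norm estimate would require the cone to be normal, and solid cones --- such as the non-negativity cones in $C^{1}$-type spaces, which are exactly the cones this paper feeds into the theorem --- need not be normal.
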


As pointed out in \cite{BDFFS}, using this strategy we only obtain existence of stationary states for a diffusion coefficient that depends on the support, without providing an \emph{explicit} formula. Even if non completely satisfactory, the following results give a useful insight on the possible conditions we can expect in order to get existence of steady states, see Remark \ref{rem:conditions}.


\subsubsection{Mixed steady state} Let us first introduce a proper definition for mixed steady states as in \figurename~\ref{fig:mixed_state}.

\begin{defn}\label{def:mixed}
Let $0<R_\rho<R_\eta$ be fixed. We call a pair $(\rho,\eta)$ a \emph{mixed steady state} solution to system \eqref{main-equation} if $\rho$ and $\eta$ are $ L^1\cap L^{\infty}(\R)$, non-negative, symmetric and radially decreasing functions with supports
$$ 
I_\rho:= \mbox{supp}(\rho)=[-R_\rho,R_\rho],\qquad \mbox{and}\quad  I_\eta:= \mbox{supp}(\eta)=[-R_\eta,R_\eta],
$$
and $\rho'(0)=\eta'(0)=0.$
\end{defn}
Let us now assume that $(\rho,\eta)$ is a steady state to system \eqref{main-equation} as in Definition \ref{def:mixed}, then \eqref{stationary system1} can be rephrased as
\begin{equation}
\label{stationary system3}
\begin{dcases}
d \rho(x) - S_\rho*\rho(x) - K*\eta(x) = C_{\rho}  &\qquad x\in I_\rho
\\
d\eta(x) - S_\eta*\eta(x)+\alpha K*\rho(x) = C_{\eta}  &\qquad x\in I_\eta
\end{dcases}.
\end{equation}
where $C_{\rho},C_{\eta}>0$ are two constants. Differentiating the two equations in \eqref{stationary system3} w.r.t. $x\in\mbox{supp}(\rho)$ and $x\in\mbox{supp}(\eta)$ respectively, we obtain
\begin{equation}
\label{stationary system4}
\begin{dcases}
d\rho_{x} = \int_{-R_\rho}^{R_\rho}S_\rho(x-y)\rho_{y}(y)dy + \int_{-R_\eta}^{R_\eta}K(x-y)\eta_{y}(y)dy  &\quad\quad x\in I_\rho
\\[.5cm]
d\eta_{x} = \int_{-R_\eta}^{R_\eta}S_\eta(x-y)\eta_{y}(y)dy - \alpha\int_{-R_\rho}^{R_\rho}K(x-y)\rho_{y}(y)dy &\quad\quad x\in I_\eta
\end{dcases}.
\end{equation}
By symmetry properties of the kernels $S_\rho$, $S_\eta$ and $K$ and the steady states $\rho$ and $\eta$, for $x>0$, we get
\begin{equation}
\label{stationary system5}
\begin{aligned}
&d\rho_{x} = \int_{0}^{R_\rho}\Big(S_\rho(x-y) - S_\rho(x+y)\Big)\rho_{y}(y)dy +\int_{0}^{R_\eta}\Big(K(x-y) - K(x+y)\Big)\eta_{y}(y)dy,
\\
&d\eta_{x} = \int_{0}^{R_\eta}\Big(S_\eta(x-y) - S_\eta(x+y)\Big)\eta_{y}(y)dy - \alpha\int_{0}^{R_\rho}\Big(K(x-y) - K(x+y)\Big)\rho_{y}(y)dy.
\end{aligned}
\end{equation}
For simplifying notations, we set
\begin{equation*}
\widetilde{G}(x,y):=G(x-y)-G(x+y),\quad\mbox{for}\quad G=S_\rho,S_\eta,K.
\end{equation*}
Notice that $G$ is a nonnegative function for $x,y>0$. We also set $p(x)=-\rho_{x}(x)$ for $x\in (-R_\rho,R_\rho)$ and $q(x)=-\eta_{x}(x)$ for $x\in (-R_\eta,R_\eta)$. Hence, \eqref{stationary system5} is rewritten simply as

\begin{equation}
\label{stationary system6}
\begin{dcases}
d p(x) = \int_{0}^{R_\rho}\widetilde{S}_{\rho}(x,y)p(y)dy +  \int_{0}^{R_\eta}\widetilde{K}(x,y)q(y)dy
\\[0.2cm]
d q(x) = \int_{0}^{R_\eta}\widetilde{S}_{\eta}(x,y)q(y)dy - \alpha\int_{0}^{R_\rho}\widetilde{K}(x,y)p(y)dy
\end{dcases}.
\end{equation}

\begin{prop}
Assume that $S_\rho,S_\eta,K$ satisfy (A1), (A2) and (A3) and fix $0<R_\rho<R_\eta$. Assume that there exists a constant $C$ such that 
\begin{equation}\label{eq:cond_a_1}
C < \frac{\int_{0}^{R_\eta}\widetilde{S}_{\eta}(x,y)q(y)dy}{\int_{0}^{R_\rho}\widetilde{K}(x,y)p(y)dy}. 
\end{equation}
then, there exists a unique mixed steady state $(\rho,\eta)$ in the sense of Definition \ref{def:mixed} to system \eqref{main-equation} with $d = d(R_\rho,R_\eta)>0$, provided that
$$\alpha < \min\bigg\{ C\,,\frac{- S'_2(R_\eta)z_\eta}{- R_\eta^2 K''(0) z_\rho} \bigg\},$$
where $z_\rho$ and $z_\eta$ are masses of $\rho$ and $\eta$ respectively.
\end{prop}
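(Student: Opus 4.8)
The plan is to recast the differentiated system \eqref{stationary system6} as a linear eigenvalue problem to which the Krein--Rutman Theorem \ref{K-R} applies, with the diffusion coefficient $d$ playing the role of the eigenvalue. I would work on a space of pairs of continuous functions on $[0,R_\rho]$ and $[0,R_\eta]$, ordered by the cone $\mathcal{K}$ of componentwise nonnegative pairs. Since $\widetilde{G}(0,y)=0$ by evenness of the kernels, $T(p,q)$ always vanishes at the origin (consistently with $\rho'(0)=\eta'(0)=0$ in Definition \ref{def:mixed}), so to obtain a \emph{solid} cone with nonempty interior $\mathcal{K}^o$ --- as required by Theorem \ref{K-R} --- I would use a weighted norm, taking $\mathcal{K}^o$ to consist of pairs comparable to $x$ near $0$ and strictly positive on $(0,R_\rho]$ and $(0,R_\eta]$. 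Defining the operator $T$ by
\begin{equation*}
T\begin{pmatrix}p\\ q\end{pmatrix}(x)=\begin{pmatrix}\displaystyle\int_0^{R_\rho}\widetilde{S}_\rho(x,y)\,p(y)\,dy+\int_0^{R_\eta}\widetilde{K}(x,y)\,q(y)\,dy\\[0.3cm]\displaystyle\int_0^{R_\eta}\widetilde{S}_\eta(x,y)\,q(y)\,dy-\alpha\int_0^{R_\rho}\widetilde{K}(x,y)\,p(y)\,dy\end{pmatrix},
\end{equation*}
system \eqref{stationary system6} becomes exactly the eigenvalue equation $T(p,q)=d\,(p,q)$, and I would set $d:=r(T)$, the spectral radius, whence $d=d(R_\rho,R_\eta)>0$.

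Two hypotheses of Theorem \ref{K-R} must be verified. \emph{Compactness} of $T$ is immediate: by (A1)--(A2) the kernels $\widetilde{S}_\rho,\widetilde{S}_\eta,\widetilde{K}$ are continuous on the compact squares, so each integral operator is compact on $C$ by the Arzel\`a--Ascoli theorem. The delicate requirement is \emph{strong positivity}, $T(\mathcal{K}\setminus\{0\})\subset\mathcal{K}^o$. The first row is harmless: since $\widetilde{S}_\rho(x,y),\widetilde{K}(x,y)>0$ for $x,y>0$ by the strict radial monotonicity in (A2), it is strictly positive on $(0,R_\rho]$ for every nonzero $(p,q)\in\mathcal{K}$. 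The obstruction lives in the sign-indefinite second row $\widetilde{S}_\eta q-\alpha\widetilde{K}p$, whose cross term carries a minus sign. This is exactly where the smallness of $\alpha$ is used: reading \eqref{eq:cond_a_1} together with $\alpha<C$ gives the pointwise domination
\begin{equation*}
\alpha\int_0^{R_\rho}\widetilde{K}(x,y)\,p(y)\,dy<C\int_0^{R_\rho}\widetilde{K}(x,y)\,p(y)\,dy<\int_0^{R_\eta}\widetilde{S}_\eta(x,y)\,q(y)\,dy,
\end{equation*}
so that the second row is strictly positive on $(0,R_\eta]$ and $T$ becomes strongly positive.

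Granting both hypotheses, Theorem \ref{K-R} furnishes $d=r(T)>0$ as a \emph{simple} eigenvalue with a unique (up to a multiplicative constant) eigenvector $(p,q)\in\mathcal{K}^o$, and no other eigenvalue admits an eigenvector in $\mathcal{K}$; this simplicity delivers the \emph{uniqueness} of the mixed state. I would then reconstruct the densities by integrating from the support boundary, $\rho(x)=\int_{|x|}^{R_\rho}p$ and $\eta(x)=\int_{|x|}^{R_\eta}q$ extended evenly, which are automatically nonnegative, even, radially decreasing and $C^1$; the single multiplicative degree of freedom in the eigenvector is fixed by a mass normalization, thereby determining $z_\rho$ and $z_\eta$, while the constants $C_\rho,C_\eta$ in \eqref{stationary system3} are recovered by evaluating the undifferentiated identities at the boundary. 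The second smallness condition serves to make \eqref{eq:cond_a_1} \emph{verifiable}: bounding the numerator from below via the monotonicity of $S_\eta$, the denominator from above via the Taylor expansion $\widetilde{K}(x,y)\approx-2K''(0)\,xy$ near the origin, and inserting the moment identities $\int_0^{R_\rho}y\,p(y)\,dy=z_\rho/2$ and $\int_0^{R_\eta}y\,q(y)\,dy=z_\eta/2$, one reduces the abstract ratio condition to the explicit inequality displayed in the statement.

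The step I expect to be the main obstacle is precisely the strong positivity of $T$. The minus sign on the predator--prey coupling $-\alpha\widetilde{K}p$ means that $T$ \emph{fails} to be positive on all of $\mathcal{K}$ for arbitrary $\alpha$ (take $q\equiv0$, $p\not\equiv0$), and the domination $\alpha\widetilde{K}p<\widetilde{S}_\eta q$ one needs refers to the still-unknown eigenfunction $(p,q)$, so that \eqref{eq:cond_a_1} is genuinely self-referential. Resolving this circularity --- by restricting to a suitable invariant subcone and reducing \eqref{eq:cond_a_1} to the eigenfunction-independent bound $\alpha<-S_\eta'(R_\eta)z_\eta/(-R_\eta^2K''(0)z_\rho)$ through the moment identities and boundary estimates above --- is the heart of the argument; once strong positivity is secured, existence, positivity, simplicity and uniqueness all follow from Krein--Rutman in the standard way.
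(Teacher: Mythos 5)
Your overall strategy coincides with the paper's: the same operator $T_{R_\rho,R_\eta}$, the same eigenvalue reading of the differentiated system \eqref{stationary system6}, compactness plus strong positivity on a solid cone of nonnegative pairs vanishing at the origin, and Theorem \ref{K-R} producing $d=d(R_\rho,R_\eta)>0$ as a simple principal eigenvalue with eigenvector in the interior of the cone. Your choice of a weighted $C^0$ space in place of the paper's $C^1$ space $\Xre$ with cone $\Kre$ and interior set $\Hre$ is only a cosmetic variant: in either setting, membership of $T_{R_\rho,R_\eta}[P]$ in the \emph{interior} of the cone requires, besides strict positivity of both components on the open intervals, a linear lower bound at the origin --- in the paper's language $f'(0)>0$ and $g'(0)>0$, in yours ``comparable to $x$ near $0$.''

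The gap is that you never verify this lower bound for the second component, and you misassign the role of the explicit condition $\alpha<\frac{-S_2'(R_\eta)z_\eta}{-R_\eta^2K''(0)z_\rho}$. In the paper the two smallness conditions do different jobs: $\alpha<C$ together with \eqref{eq:cond_a_1} gives $g(x)>0$ on $(0,R_\eta)$, while the second bound is what guarantees
\[
g'(0)=-2\int_0^{R_\eta}S_\eta'(y)q(y)\,dy+2\alpha\int_0^{R_\rho}K'(y)p(y)\,dy>0,
\]
proved by rewriting $\int S_\eta'q=\int S_\eta''\eta$ and applying Chebyshev's inequality to bound the first term below by $-S_2'(R_\eta)z_\eta/R_\eta^2$ (after division by $R_\eta$), and bounding the second term by $-K''(0)z_\rho$. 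Your proposal instead presents this condition as a way of making \eqref{eq:cond_a_1} ``verifiable'' through a Taylor expansion of $\widetilde K$ near the origin; but that expansion only controls the ratio in the limit $x\to0$, not uniformly in $x$, so it cannot substitute for hypothesis \eqref{eq:cond_a_1} --- and meanwhile the place where the bound is actually needed, namely the positivity of $g'(0)$ without which $T_{R_\rho,R_\eta}[P]$ fails to land in the interior of the cone and strong positivity breaks down, is left unaddressed in your argument (strict positivity on the open interval alone does not put a function vanishing at $0$ into the interior of your weighted cone). Your remark that \eqref{eq:cond_a_1} is self-referential in the unknown eigenfunction is fair, but it applies equally to the paper, which simply assumes the condition and acknowledges the result as not completely satisfactory; neither your invariant-subcone suggestion nor the paper actually removes that circularity.
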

\begin{proof}
Let us first introduce the following Banach space
\begin{equation*}
\Xre=\big\{(p,q)\in C^{1}[0,R_\rho]\times C^{1}[0,R_\eta]:p(0) = q(0)=0\big\},
\end{equation*}
endowed with the $W^{1,\infty}$-norm for the two components $p$ and $q$. Define the operator $T_{R_\rho,R_\eta}[P]$ on the Banach space $\Xre$ as
\begin{equation*}
T_{R_\rho,R_\eta}[P] := (f,g)\in C^{1}[0,R_\rho]\times C^{1}[0,R_\eta],
\end{equation*}
where $P$ denotes the elements $P=(p,q)\in \Xre$, and $(f,g)$ are given by
\begin{equation*}
\begin{aligned}
&f(x)= \int_{0}^{R_\rho}\widetilde{S}_{\rho}(x,y)p(y)dy +  \int_{0}^{R_\eta}\widetilde{K}(x,y)q(y)dy &\quad\mbox{for}\quad x\in[0,R_\rho],
\\
&g(x)=\int_{0}^{R_\eta}\widetilde{S}_{\eta}(x,y)q(y)dy - \alpha\int_{0}^{R_\rho}\widetilde{K}(x,y)p(y)dy &\quad\mbox{for}\quad x\in[0,R_\eta].
\end{aligned}
\end{equation*}
By assumptions on the kernels, it follows that the operator $T_{R_\rho,R_\eta}$ is compact on the Banach space $\Xre$. Now, consider the subset $\Kre\subseteq \Xre$ defined as
\begin{equation*}
\Kre=\big\{P\in \Xre:p\geq0,q\geq0\big\}.
\end{equation*}
It can be shown that this set is indeed a solid cone in $\Kre$. Moreover, we have that
\begin{equation*}
\begin{aligned}
\Hre= \big\{P\in \Kre:p^{\prime}(0)>0,~&p
(x)>0\hspace{0.2cm}\mbox{for all}\hspace{0.2cm}x\in (0,R_\rho),\hspace{0.2cm}\mbox{and} 
\\
&q^{\prime}(0)>0,~q(x)>0\hspace{0.2cm}\mbox{for all}\hspace{0.2cm}x\in (0,R_\eta) \big\}\subset \overset{\circ}{\Kre},
\end{aligned}
\end{equation*}
where $\overset{\circ}{\Kre}$ denotes the interior of $\Kre$. Next, we show that the operator $T_{R_\rho,R_\eta}$ defined above is strongly positive on the solid cone $\Kre$ in the sense of Theorem \ref{K-R}. Let $(p,q)\in \Kre$ with $p,q\neq 0$, then by the definition of the operator $T_{R_\rho,R_\eta}$, the first component is non-negative. Concerning the second component, we have
\begin{equation}
\int_{0}^{R_\eta}\widetilde{S}_{\eta}(x,y)q(y)dy - \alpha\int_{0}^{R_\rho}\widetilde{K}(x,y)p(y)dy > 0
\end{equation}
if and only if $\alpha <C$ with $C$ as in \eqref{eq:cond_a_1}. Next, it is easy to show that the derivative at $x=0$ of the first component is strictly positive. The derivative of the second component is given by 
\begin{equation*}
\begin{aligned}
&\frac{d}{dx}\Big|_{x=0}\Bigg( \int_{0}^{R_\eta}\widetilde{S}_{\eta}(x,y)q(y)dy - \alpha\int_{0}^{R_\rho}\widetilde{K}(x,y)p(y)dy \Bigg) 
\\
& =\int_{0}^{R_\eta}\widetilde{S}_{\eta,x}(0,y)q(y)dy - \alpha\int_{0}^{R_\rho}\widetilde{K}_{x}(0,y)p(y)dy
\\
& =\int_{0}^{R_\eta}\big(S_\eta^{\prime}(-y) - S_\eta^{\prime}(y)\big)q(y)dy - \alpha\int_{0}^{R_\rho}\big(K^{\prime}(-y) - K^{\prime}(y)\big)p(y)dy
\\
& =-2\int_{0}^{R_\eta}S_\eta^{\prime}(y)q(y)dy + 2\alpha\int_{0}^{R_\rho}K^{\prime}(y)p(y)dy:=A.
\end{aligned}
\end{equation*} 
Now, we need to find the condition on $\alpha$ such that $A>0$. Chebyshev's inequality in the first integral of $A$ yields the bound
\begin{equation*}
\begin{aligned}
-\frac{2}{R_\eta}\int_{0}^{R_\eta}S_\eta^{\prime}(y)q(y)dy &= -\frac{2}{R_\eta}\int_{0}^{R_\eta}S''_\eta(y)\eta(y)dy 
\\
&\geq \left( \frac{1}{R_\eta}\int_{0}^{R_\eta}-S''_2(y) dy \right) \left( \frac{2}{R_\eta}\int_{0}^{R_\eta} \eta(y)dy \right) = \frac{-S'_2(R_\eta) z_\eta}{R_\eta^2}.
\end{aligned}
\end{equation*}
The other integral can be easily bounded by
\begin{equation*}
-2\int_{0}^{R_\rho}K'(y)p(y)dy = -2\int_{0}^{R_\rho}K''(y)\rho(y)dy < -K''(0) z_\rho.
\end{equation*}
Thus, $A>0$ holds under the condition
\begin{equation}\label{1st alpha condition}
\alpha < \frac{- S'_2(R_\eta)z_\eta}{- R_\eta^2 K''(0) z_\rho}.
\end{equation}
As a consequence, the operator $T_{R_\rho,R_\eta}[P]$ belongs to $H_{R_\rho,R_\eta}$, which implies that the operator $T_{R_\rho,R_\eta}$ is strongly positive on the solid cone $\Kre$. Then, the Krein-Rutman theorem applies and guarantees the existence of an eigenvalue $d=d(R_\rho,R_\eta)$ such that
\begin{equation*}
T_{R_\rho,R_\eta}[P] = dP,
\end{equation*}
with an eigenspace generated by a nontrivial element $(p,q)$ in the interior of the solid cone $\Kre$.
\end{proof}

\subsubsection{Separated stationary states}
We now introduce the following definition
\begin{defn}\label{def3}
Fix $0 < R_\rho < L_\eta < R_\eta$. Set $\cmei = \frac{R_\eta+L_\eta}{2}$. A pair $(\rho,\eta)$ is called \emph{separated steady state} to system \eqref{main-equation} if $\rho$ and $\eta$ have supports
$$ 
I_\rho:= \mbox{supp}(\rho)=[-R_\rho,R_\rho],\qquad \mbox{and}\quad  I_\eta:= \mbox{supp}(\eta)=[-R_\eta,-L_\eta]\cup[L_\eta,R_\eta],
$$
respectively, $\rho,\eta \in L^1\cap L^{\infty}(\R)$, both are non-negative, $\rho$ is symmetric around zero and monotone decreasing on $[0,R_\rho]$, $\eta$ symmetric on both parts of its support, monotone decreasing on $[-\cmei,-L_\eta]\cup[\cmei,R_\eta]$ , and $\rho'(0)=\eta'(\cmei)= \eta'(-\cmei) =0$.
\end{defn}  

Assume that $\rho$ and $\eta$ are solutions to \eqref{stationary system1} with a structure as described in Definition \ref{def3}, then \eqref{stationary system1} can be rephrased as
\begin{equation}
\label{3rd stationary system222}
\begin{dcases}
d\rho - S_\rho*\rho-K*\eta = C_{1} &\quad \mbox{for}\quad x\in I_\rho
\\
d\eta - S_\eta*\eta+\alpha K*\rho = C_{2} &\quad \mbox{for}\quad x\in I_\eta
\end{dcases}
\end{equation}
where $C_1,C_2>0$ are two constants. A similar procedure to the one before leads to the following
\begin{equation}
\label{3rd stationary system6}
\begin{dcases}
d p(x) = \int_{0}^{R_\rho}\hat{S}_{\rho}(x,y)p(y)dy +  \int_{\cmei}^{R_\eta}\tilde{K}(x,y)q(y)dy  \quad x\in[0,R_\rho]
\\[0.2cm]
d q(x) = \int_{\cmei}^{R_\eta}\tilde{S}_{\eta}(x,y)q(y)dy - \alpha\int_{0}^{R_\eta}\hat{K}(x,y)p(y)dy  \quad x\in[\cmei,R_\eta]
\end{dcases},
\end{equation}
where
\begin{equation}
\begin{aligned}
&\hat{S}_{\rho}(x,y) = S_\rho(x-y)-S_\rho(x+y),
\\
&\hat{K}(x,y) = K(x-y)-K(x+y),
\\
&\widetilde{K}(x,y) = K(x-y)-K(x+y) - K(x+y-2\cmei) + K(x-y+2\cmei),
\\
&\widetilde{S}_{\eta}(x,y) = S_\eta(x-y)-S_\eta(x+y) - S_\eta(x+y-2\cmei) + S_\eta(x-y+2\cmei),
\end{aligned}
\end{equation}
and $p(x)=-\rho^{\prime}(x)$ restricted to the interior of $I_\rho$ and $q(x)=-\eta^{\prime}(x)$ restricted to the interior of $I_\eta$.
Let us now introduce the Banach space
\begin{equation*}
\Xrre = \big\{(p,q)\in C^{1}[0,R_\rho]\times C^{1}[\cmei,R_\eta] : p(0) = q(\cmei) = 0\big\},
\end{equation*}
endowed with the $W^{1,\infty}$-norm for the two components $p$ and $q$. Define the operator $T_{R_\rho,L_\eta,R_\eta}[P]$  as
\begin{equation}\label{operator2}
\Xrre\ni P \longmapsto  T_{R_\rho,L_\eta,R_\eta}[P] := (f,g)\in C^{1}[0,R_\rho]\times C^{1}[\cmei,R_\eta],
\end{equation}
with 
\begin{align}
&f(x) = \int_{0}^{R_\eta}\hat{S}_{\rho}(x,y)p(y)dy +  \int_{\cmei}^{R_\eta}\tilde{K}(x,y)q(y)dy:=A_1(x)+A_2(x)  \quad x\in[0,R_\rho],
 \label{component_1} \\
&g(x) = \int_{\cmei}^{R_\eta}\tilde{S}_{\eta}(x,y)q(y)dy - \alpha\int_{0}^{R_\rho}\hat{K}(x,y)p(y)dy:=A_3(x)-\alpha A_4(x)  \quad x\in[\cmei,R_\eta]. \label{component_2}
\end{align}
Define $\Krre\subseteq \Xrre$ as
\begin{equation*}
\Krre=\big\{P\in \Xrre:p\geq0,q\geq0\big\}.
\end{equation*}
It can be shown that this set is a solid cone in $\Xrre$. Moreover,
\begin{equation*}
\begin{aligned}
\Hrre= \big\{P\in \Krre:p'(0)>0,~&p(x)>0\hspace{0.2cm}\mbox{for all}\hspace{0.2cm}x\in (0,R_\rho],\hspace{0.2cm}\mbox{and} 
\\
&q'(\cmei)>0,~q(x)>0\hspace{0.2cm}\mbox{for all}\hspace{0.2cm}x\in (\cmei,R_\eta] \big\}\subset \overset{\circ}{\Krre},
\end{aligned}
\end{equation*}
where $\overset{\circ}{\Krre}$ denotes the interior of $\Krre$.
\begin{prop}\label{prop:sss}
Let $0 < R_\rho < L_\eta < R_\eta$ be fixed and set $\cmei = \frac{R_\eta+L_\eta}{2}$. Assume that $S_\rho,S_\eta,K$ satisfy (A1), (A2) and (A3) and moreover that $K,~S_\eta$ are strictly concave on $\big[-(R_\rho+R_\eta),R_\rho+R_\eta\big],~\big[-(\cmei+R_\eta),\cmei+R_\eta\big]$ respectively. Assume that there exists a constant $c>0$ such that 
\begin{equation*}
\begin{aligned}
c<\frac{A_3(x)}{A_4(x)} \qquad\mbox{for all}~x\in[\cmei,R_\eta],
\end{aligned}
\end{equation*}
with $A_3$ and $A_4$ defined in \eqref{component_2} and consider $\alpha$ such that
\begin{equation}\label{alfa_sep}
\alpha < \min\left(c,\frac{ - \big( 2S''_\eta(\cmei-R_\eta) + S''_\eta(\cmei+R_\eta) + S''_\eta(2\cmei) \big) z_\eta}{-4K''(0)z_\rho }\right).
\end{equation}
Then, there exists a unique separated steady state $(\rho,\eta)$ to \eqref{main-equation} in the sense of Definition \ref{def3} with $d=d(R_\rho,L_\eta,R_\eta)>0$.
\end{prop}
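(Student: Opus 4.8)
The plan is to follow verbatim the Krein--Rutman strategy already used for the mixed steady state, now applied to the operator $T_{R_\rho,L_\eta,R_\eta}$ from \eqref{operator2} on the solid cone $\Krre$. Once we verify that (a) $T_{R_\rho,L_\eta,R_\eta}$ is compact, (b) $\Krre$ is a solid cone whose interior contains $\Hrre$, and (c) $T_{R_\rho,L_\eta,R_\eta}$ is strongly positive, Theorem \ref{K-R} yields a simple principal eigenvalue $d=d(R_\rho,L_\eta,R_\eta)>0$ with eigenvector $(p,q)\in\Hrre$. Undoing the substitutions $p=-\rho'$, $q=-\eta'$ and integrating, together with the mass normalisations and the boundary data $\rho'(0)=\eta'(\cmei)=0$ built into $\Xrre$, reconstructs a pair $(\rho,\eta)$ solving \eqref{3rd stationary system222}, hence \eqref{stationary system1}, in the sense of Definition \ref{def3}; simplicity of the eigenvalue gives uniqueness.

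Compactness of $T_{R_\rho,L_\eta,R_\eta}$ follows from (A1), since the reflected kernels $\hat{S}_\rho,\tilde{K},\tilde{S}_\eta,\hat{K}$ are $C^1$ and the associated integral operators send bounded subsets of $\Xrre$ into equibounded, equicontinuous families, so Arzel\`a--Ascoli applies. The first component $f=A_1+A_2$ in \eqref{component_1} is immediately non-negative: the radial monotonicity (A2) and the strict concavity of $S_\eta,K$ on the prescribed intervals force $\hat{S}_\rho,\tilde{K}\ge0$ for $x,y>0$, whence $f\ge0$ and $f'(0)>0$ exactly as before. For the second component $g=A_3-\alpha A_4$ in \eqref{component_2}, strict concavity of $S_\eta$ gives $\tilde{S}_\eta(x,y)>0$, so $A_3(x)>0$ on $(\cmei,R_\eta]$, and the sign requirement $g>0$ is equivalent to $\alpha<A_3(x)/A_4(x)$ for all such $x$, which is guaranteed by $\alpha<c$.

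The crux is the boundary-derivative inequality $g'(\cmei)>0$, the analogue of the quantity $A$ in the mixed case. Differentiating the four-term kernel and evaluating at $x=\cmei$, oddness of $S_\eta'$ collapses $\tilde{S}_{\eta,x}(\cmei,y)$ into $2S_\eta'(\cmei-y)-S_\eta'(\cmei+y)+S_\eta'(3\cmei-y)$; an integration by parts in $y$ against $q=-\eta'$, using $S_\eta'(0)=0$ and $\eta(R_\eta)=0$ to dispose of the regular boundary contributions, converts these into integrals of $S_\eta''$ weighted by the density $\eta$. Applying Chebyshev's inequality termwise, as in the estimate for the mixed state, and evaluating the monotone second-derivative factors at the extremal arguments $\cmei\pm R_\eta$ and $2\cmei$ produces the lower bound with numerator $-\big(2S_\eta''(\cmei-R_\eta)+S_\eta''(\cmei+R_\eta)+S_\eta''(2\cmei)\big)z_\eta$. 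The competing term $\alpha A_4'(\cmei)$ is bounded above, after integration by parts against $p=-\rho'$, by a multiple of $-K''(0)z_\rho$; collecting the constants gives exactly the threshold in \eqref{alfa_sep}, so any $\alpha$ below it forces $g'(\cmei)>0$.

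The \emph{main obstacle} is precisely this last step: unlike the mixed configuration, the kernel $\tilde{S}_\eta$ carries four reflected copies, so the integration by parts generates several boundary terms that must be shown to cancel or to carry a favourable sign, and one must track carefully which endpoint of $[\cmei,R_\eta]$ realises the extremum in each Chebyshev estimate. Once $g>0$ and $g'(\cmei)>0$ are established, we have $T_{R_\rho,L_\eta,R_\eta}[P]\in\Hrre$ for every $P\in\Krre\setminus\{0\}$, strong positivity holds, and the Krein--Rutman conclusion closes the argument.
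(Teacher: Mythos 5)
Your proposal follows the same route as the paper: the operator $T_{R_\rho,L_\eta,R_\eta}$ on the cone $\Krre$, positivity of $A_1,A_2,A_3$ via monotonicity and the concavity/slope comparison of the four reflected copies of the kernels, the sign condition $\alpha<c$ for $g>0$, and the boundary-derivative estimate at $x=\cmei$ obtained by passing from $q=-\eta'$ and $p=-\rho'$ to integrals of $S''_\eta$ and $K''$ against the densities and evaluating at the extremal arguments to recover exactly the threshold \eqref{alfa_sep}, after which Krein--Rutman gives the simple eigenvalue $d(R_\rho,L_\eta,R_\eta)$ and uniqueness. The obstacle you flag (cancellation of the boundary terms produced by the four-term kernel $\tilde{S}_\eta$, which indeed cancel pairwise using $S'_\eta(0)=0$, $\eta(R_\eta)=0$ and the matching $\pm S'_\eta(2\cmei)\eta(\cmei)$ contributions) is precisely the computation carried out in the paper, so the argument is essentially identical.
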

\begin{proof}
We study each integral in \eqref{component_1} and \eqref{component_2} separately. $A_1(x)>0$ by the assumptions that $S_\rho$ is decreasing on $[0,\infty)$.
\begin{figure}
\begin{tikzpicture}
\draw[->] (-3,0) -- (6,0) node[right] {$x$};
\draw[->] (0,-0.5) -- (0,4.2) node[above] {$K(x)$};
\draw[scale=0.5,domain=-6:12,smooth,variable=\x,blue] plot ({\x},{10/sqrt(pi)*exp(-0.01*\x*\x)});
\draw[] (7.07/2,-0.3) -- (7.07/2,2); 

\draw[dashed] (6/2,0) -- (6/2,3.93622/2);
\draw[dashed] (5.2/2,0) -- (5.2/2,4.30519/2);
\draw[dashed] (1.8/2,0) -- (1.8/2,5.46203/2);
\draw[dashed] (2.6/2,0) -- (2.6/2,5.27311/2);

\draw[dashed] (0,3.93622/2) -- (6/2,3.93622/2);
\draw[dashed] (0,4.30519/2) -- (5.2/2,4.30519/2);
\draw[dashed] (0,5.46203/2) -- (1.8/2,5.46203/2);
\draw[dashed] (0,5.27311/2) -- (2.6/2,5.27311/2);

\node[scale=0.8] at (5,2) {concavity point};
\node[scale=3] at (7.07/2,3.422/2){.};

\node at (0.8,-0.3) {$|a_1|$};
\node[green,scale=3] at (1.8/2,0){.};

\node at (1.4,-0.3) {$|a_2|$};
\node[green,scale=3] at (2.6/2,0){.};

\node at (2.6,-0.3) {$a_3$};
\node[green,scale=3] at (5.2/2,0){.};

\node at (3,-0.3) {$a_4$};
\node[green,scale=3] at (6/2,0){.};

\node[scale=0.8] at (-0.4,2.65){$\Delta_1 y$};
\node[scale=0.8] at (-0.4,2.12){$\Delta_2 y$};

\draw[domain=-0.5:3,smooth,variable=\x,red] plot ({\x},{((5.27311/2-5.46203/2)/(2.6/2-1.8/2))*(\x-1.8/2) + 5.46203/2});
\draw[domain=1:5,smooth,variable=\x,red] plot ({\x},{((3.93622/2-4.30519/2)/(6/2-5.2/2))*(\x-6/2) + 3.93622/2});

\node[scale=2.5] at (1.8/2,5.46203/2){.};
\node[scale=2.5] at (2.6/2,5.27311/2){.};
\node[scale=2.5] at (5.2/2,4.30519/2){.};
\node[scale=2.5] at (6/2,3.93622/2){.};

\end{tikzpicture}
\caption{For a generic kernel $K$ and $a_1,$ $a_2$,  $a_3$ and $a_4$ defined in \eqref{eq:a}, the picture shows that $\Delta_2 y > \Delta_1 y$ and so the slope between the two point $(a_3,K(a_3)),(a_4,K(a_4))$ is less than the slope between the two points $(|a_1|,K(|a_1|)),(|a_2|,K(|a_2|))$.}
\label{fig1}
\end{figure}
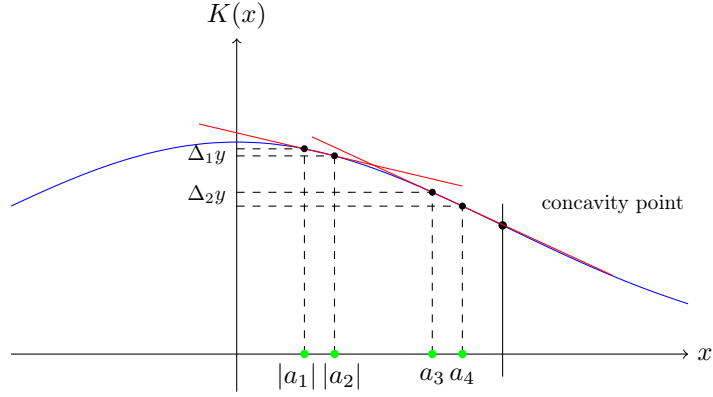
Concerning the sign of $A_2$(x) we set 
\begin{equation}\label{eq:a}
    a_1 = x+y-2\cmei,\quad a_2 = x-y,\quad a_3 = x-y+2\cmei,\quad a_4 = x+y,
\end{equation}
see \figurename~\ref{fig1}.  The following order holds for all $(x,y)\in[0,R_\rho]\times[\cmei,R_\eta]$, 
\begin{equation}\label{order}
|a_1| < |a_2|,\qquad  |a_1| < a_3,\qquad |a_1| < a_4,\qquad  |a_2| < a_4,\qquad a_3 < a_4. 
\end{equation}
Using these notations, $A_2$ becomes 
\begin{equation*}
A_2 (x)= \int_{\cmei}^{R_\eta}\big[ K(a_2)-K(a_4) - K(a_1) + K(a_3) \big]q(y)dy.
\end{equation*}
It is easy to check that $a_4 - a_3 = a_1 - a_2$, then we have
\begin{equation}\label{3rd A2}
\begin{aligned}
K(a_3)-K(a_4) - K(a_1) + K(a_2) &= -(a_4-a_3)\left( \frac{K(a_4)-K(a_3) }{a_4-a_3} + \frac{K(a_1) - K(a_2)}{a_4-a_3} \right)
\\
&= -(a_4-a_3)\left( \frac{K(a_4)-K(a_3) }{a_4-a_3} + \frac{K(a_1) - K(a_2)}{a_1-a_2}\right)>0,
\end{aligned}
\end{equation}
where the last inequality holds thanks to the concavity assumption on $K$ and \eqref{order}. This implies that $A_2(x)>0$ and the first component of $T_{R_\rho,R_\eta,R_\eta}$, $f(x)>0$ for all $x\in(0,R_\rho)$. The same can be done to prove that $A_3(x)>0$. On the other hand, since the function $K(x-y)-K(x+y)$ is non-negative for $x,y>0$ we get that $-\alpha A_4(x)<0$. Therefore, the second component $g(x)$ can be positive under the condition  $\alpha < c.$
The derivatives of the first component computed at $x=0$ and the second computed at $x=\cmei$, are given by
\begin{equation*}
\begin{aligned}
\frac{d}{dx}f(x)\big|_{x=0} = & \int_{0}^{R_\rho}-2S'_\rho(y)p(y)dy  +  \int_{\cmei}^{R_\eta}-2\big[ K'(y) + K'(y-2\cmei) \big]q(y)dy,\\
\frac{d}{dx}g(x)\big|_{x=\cmei}  =& \int_{\cmei}^{R_\eta}\big[ 2S'_\eta(\cmei-y)-S'_\eta(\cmei+y) + S'_\eta(3\cmei-y) \big]q(y)dy  
\\
& - \alpha\int_{0}^{R_\rho}\big[ K'(\cmei-y) - K'(\cmei+y) \big] p(y)dy.
\end{aligned}
\end{equation*}
By the assumptions on the kernels $K,\,S_\rho$, we have
\begin{equation*}
 \int_{0}^{R_\rho}-2S'_\rho(y)p(y)dy > 0, \qquad   \int_{0}^{R_\rho}\big[ K'(\cmei-y) - K'(\cmei+y) \big] p(y)dy > 0.
\end{equation*}
Under the assumptions on the concavity of the kernels $K,S_\eta$, it is easy to show
\begin{equation*}
\begin{aligned}
& \int_{\cmei}^{R_\eta}-2\big[ K'(y) + K'(y-2\cmei) \big]q(y)dy > 0,
\\
&\int_{\cmei}^{R_\eta}\big[ 2S'_\eta(\cmei-y)-S'_\eta(\cmei+y) + S'_\eta(3\cmei-y) \big]q(y)dy  > 0.
\end{aligned}
\end{equation*}
By concavity assumption on $S_\eta$ i.e. $S''_\eta(x)<0$ on $[-2\cmei,2\cmei]$, we have the bound
\begin{equation*}
\begin{aligned}
 &\int_{\cmei}^{R_\eta}\big[ 2S'_\eta(\cmei-y)-S'_\eta(\cmei+y) + S'_\eta(3\cmei-y) \big]q(y)dy  
\\ 
 &=  -\int_{\cmei}^{R_\eta}\big[ 2S''_\eta(\cmei-y) + S''_2(\cmei+y) + S''_\eta(3\cmei-y) \big]\eta(y)dy 
\\ 
&\geq -\frac{z_\eta}{4} (2S''_\eta(\cmei-R_\eta) + S''_\eta(\cmei+R_\eta) + S''_\eta(2\cmei)) > 0.
\end{aligned}
\end{equation*}
On the other hand, 
\begin{equation*}
\begin{aligned}
 \int_{0}^{R_\rho}\left( K'(\cmei-y) - K'(\cmei+y) \right) p(y)dy =  -\int_{0}^{R_\rho}\left(K''(\cmei-y) + K''(\cmei+y) \right) \rho(y)dy \leq -K''(0)z_\rho .
\end{aligned}
\end{equation*}
Hence, we get $\frac{d}{dx}g(x)\big|_{x=0}>0$ under the condition in \eqref{alfa_sep}. Then the operator $T$ defined above is strongly positive on the solid cone $\Krre$ and Krein-Rutman theorem applies.
\end{proof}

\section{Existence for Multiple Bumps Steady States}\label{sec:implicit}
In this Section we prove existence and uniqueness of a multiple bumps steady state in the sense of Definition \ref{def:multi_bump} fixing masses and a small diffusion coefficient. Following the approach in \cite{budif,BDFFS}, we first formulate the problem in terms of the pseudo-inverse
functions and then we use the Implicit Function Theorem (cf. \cite[Theorem 15.1]{Del}). 

We start rewriting our stationary system in terms of pseudo-inverse functions. Let $(\rho,\eta)$ be a solution to the stationary system  
\begin{equation}\label{stationary system sec2}
\begin{dcases}
0 = \big( \rho \big( d\rho - S_\rho*\rho - \alpha_{\rho} K*\eta \big)_{x}\big)_{x}
\\
0 = \big( \eta \big( d\eta - S_\eta*\eta - \alpha_{\eta} K*\rho \big)_{x}\big)_{x}.
\end{dcases}
\end{equation}
where $\alpha_{\rho} = 1$ and $\alpha_{\eta} = -\alpha$. Assume that $(\rho,\eta)$ have masses $z_\rho$ and $z_\eta$ respectively and denote by $cm_{l},~l = \{\rho,\eta \},$ the centres of masses
\[\int_\R x\rho(x)dx = cm_{\rho}, \qquad \int_\R x\eta(x)dx = cm_{\eta}. \]
Remember that the only conserved quantity in the evolution, together with the masses, is the \emph{joint centre of mass}
\begin{equation}\label{joint}
    CM_\alpha = \alpha cm_\rho - cm_\eta,
\end{equation}
that we can consider fixed. Define the cumulative distribution of $\rho$ and $\eta$ as
\[ F_{\rho}(x) = \int_{-\infty}^{x}\rho(x)dx, \qquad F_{\eta}(x) = \int_{-\infty}^{x}\eta(x)dx.  \] 
Let $u_{l}:[0,z_l]\rightarrow\R,~l \in \{\rho,\eta\},$ be the pseudo-inverse of $F_l,$ namely
\[u_{l}(z) = \inf \{ x\in \R : F_{l}\geq z \}, \quad l \in \{\rho,\eta\},\]
supported on
\[ \mbox{supp}(u_{l}) = [0,z_{l}] := J_{l},\qquad l \in \{\rho,\eta\}.\]
For $\rho$ and $\eta$ multiple bumps in the sense of Definition \ref{def:multi_bump} we can denote the mass of each bump as 
$$
\int\rho_{i}(x)dx = z_{\rho}^i, \quad \int\eta_{j}(x)dx = z_{\eta}^j, \quad i = 1,2,\ldots, N_\rho. \quad j = 1,2,\ldots, N_\eta,
$$
and the centres of masses accordingly,
$$
\int x\rho_{i}(x)dx = cm_{\rho}^i, \quad \int x\eta_{j}(x)dx = cm_{\eta}^j, \quad \quad i = 1,2,\ldots, N_\rho. \quad j = 1,2,\ldots, N_\eta.$$
and we can always assume that the centres of masses are ordered species by species, i.e. $cm_l^i\geq cm_l^j$ if $i\geq j$. Let us consider the case of  centres of masses that are stationary solutions of the purely non-local particle system \eqref{eq:pred_particle_steady}, that we recall for the reader convenience,
\begin{equation}\label{eq:cmi}
    \begin{cases}
    \displaystyle
     \sum_{j=1}^{N_\rho} S'_\rho( cm_\rho^i - cm_\rho^j )z_\rho^j +  \sum_{j=1}^{N_\eta} K'( cm_\rho^i - cm_\eta^j ) z_\eta^j = 0, \quad i=1,\ldots,N_\rho,\\
     \displaystyle
      \sum_{j=1}^{N_\eta} S'_\eta( cm_\eta^i - cm_\eta^j ) z_\eta^j - \alpha\sum_{j=1}^{N_\rho} K'( cm_\eta^i - cm_\rho^j ) z_\rho^j = 0, \quad i=1,\ldots,N_\eta,
    \end{cases}
\end{equation}
coupled with the conservation of the joint centre of mass $CM_\alpha$ in \eqref{joint}, see the discussion in Section \ref{sec:preliminaries}.
Then the pseudo-inverse $u_l$ reads as
$$
u_l(z) = \sum_{i=1}^{N_l} u_l^i(z) \mathds{1}_{J_l^i}(z), \quad l \in \{\rho,\eta\},
$$
where
$$
\mbox{supp}(u_{l}) = [0,z_{l}] = J_{l} = \bigcup_{i=1}^{N_l}\left[\sum_{k=1}^{i} z_l^{k-1}, \sum_{k=1}^{i} z_l^k \right]:=\bigcup_{i=1}^{N_l}\left[\hat{z}_l^{i},  \tilde{z}_l^i \right]:=\bigcup_{i=1}^{N_l}J_l^i,\qquad l \in \{\rho,\eta\},
$$
with $z_l^0 = 0$ and $ z_l = \sum_{k=1}^{N_l} z_l^k.$ We are now in the position of reformulating \eqref{stationary system sec2} in terms of the pseudo-inverse functions as follows:
\begin{equation}
\label{Pseudo-inverse1}
\begin{dcases}
\frac{d}{2}\partial_{z}\Big(\big(\partial_{z}u_{\rho}(z)\big)^{-2}\Big) = \int_{J_{\rho}}S_{\rho}^{\prime}\big(u_{\rho}(z)-u_{\rho}(\xi)\big)d\xi + \alpha_{\rho}\int_{J_{\eta}}K^{\prime}\big(u_{\rho}(z)-u_{\eta}(\xi)\big)d\xi,~~z\in J_{\rho},
\\
\frac{d}{2}\partial_{z}\Big(\big(\partial_{z}u_{\eta}(z)\big)^{-2}\Big) = \int_{J_{\eta}}S_{\eta}^{\prime}\big(u_{\eta}(z)-u_{\eta}(\xi)\big)d\xi + \alpha_{\eta}\int_{J_{\rho}}K^{\prime}\big(u_{\eta}(z)-u_{\rho}(\xi)\big)d\xi,~~z\in J_{\eta}.
\end{dcases}
\end{equation}
The restriction to $z\in J_{l}^i$, $i=1,2,\cdots,N_l,$ and $l \in \{\rho,\eta\},$ , allow us to rephrase \eqref{Pseudo-inverse1} in the compact form
\begin{equation}\label{compact}
\begin{aligned}
\frac{d}{2}\partial_{z}\Big(\big(\partial_{z}u_{l}^i(z)\big)^{-2}\Big) & = \sum_{j=1}^{N_l} \int_{J_{l}^j}S_{l}^{\prime}\big(u_{l}^i(z) -  u_{l}^j(\xi)\big)d\xi + \alpha_{l}\sum_{j=1}^{N_h}\int_{J_{h}^j}K^{\prime}\big(u_{l}^i(z) -  u_{h}^j(\xi)\big)d\xi, \quad z\in J_l^i.
\end{aligned}
\end{equation}
Similar to \cite{budif, BDFFS}, we suggest the linearization  
$$
u_l^i = cm_l^i  + \delta v_l^i \quad i=1,2,\cdots,N_l, \mbox{ and } l \in \{\rho,\eta\},
$$ 
with $v_l^i$, being odd functions defined on $J_l^i$. Using this ansatz in  \eqref{compact}, with the scaling $d =\delta^{3}$ we have 
\begin{equation}
\label{Pseudo-inverse2}
\begin{aligned}
\frac{\delta}{2}\partial_{z}\Big(\big(\partial_{z}v_{l}^i(z)\big)^{-2}\Big)  = & \sum_{j=1}^{N_l}\int_{J_{l}^j}S_{l}^{\prime}\Big(cm_l^i - cm_l^j + \delta\big( v_{l}^i(z)-v_{l}^j(\xi)\big) \Big)d\xi 
\\
& + \alpha_{l}\sum_{j=1}^{N_h}\int_{J_{h}^j}K^{\prime}\Big(cm_l^i - cm_h^j + \delta\big( v_{l}^i(z)-v_{h}^j(\xi) \big)\Big)d\xi.
\end{aligned}
\end{equation}
Multiplying \eqref{Pseudo-inverse2} by $\delta\partial_{z}v_l^i$, and taking the primitives w.r.t. $z$, we obtain
\begin{equation}
\label{Pseudo-inverse3}
\begin{aligned}
\frac{\delta^2}{\partial_{z}v_{l}^i(z)}  = & \sum_{j=1}^{N_l}\int_{J_{l}^j}S_{l}\Big(cm_l^i - cm_l^j + \delta\big( v_{l}^i(z)-v_{l}^j(\xi)\big) \Big)d\xi 
\\
& + \alpha_{l}\sum_{j=1}^{N_h}\int_{J_{h}^j}K\Big(cm_l^i - cm_h^j + \delta\big( v_{l}^i(z)-v_{h}^j(\xi) \big)\Big)d\xi  + A_l^i,\quad z\in J_l^i,
\end{aligned}
\end{equation}
where $A_l^i$ are the integration constants. In order to recover the constants $A_l^i$, we substitute $\tilde{z}_l^i$ into equation \eqref{Pseudo-inverse3}. Denoting by $v_l^i(\tilde{z}_l^i) = \lambda_l^i,$ we obtain
\begin{equation}
\label{A constants}
\begin{aligned}
A_l^i = & -\sum_{j=1}^{N_l}\int_{J_{l}^j}S_{l}\Big(cm_l^i - cm_l^j + \delta\big( \lambda_l^i - v_{l}^j(\xi)\big) \Big)d\xi 
 - \alpha_{l}\sum_{j=1}^{N_h}\int_{J_{h}^j}K\Big(cm_l^i - cm_h^j + \delta\big( \lambda_l^i - v_{h}^j(\xi) \big)\Big)d\xi.
\end{aligned}
\end{equation}
 Next, we set $G_{l}$ and $H$ such that $G_l^{\prime} = S_l$ and $H^{\prime} = K$, with $G_l,~H$ to be odd and satisfy $G_l(0) = H(0) = 0$.  Then, multiplying \eqref{Pseudo-inverse3} again by $\delta\partial_{z}v_l^i$ and taking the primitives w.r.t. $z\in J_l^i$, we obtain
\begin{equation}
\label{Pseudo-inverse4}
\begin{aligned}
\delta^3 z  = & \sum_{j=1}^{N_l}\int_{J_{l}^j}G_{l}\Big(cm_l^i - cm_l^j + \delta\big( v_{l}^i(z)-v_{l}^j(\xi)\big) \Big)d\xi 
\\
& + \alpha_{l}\sum_{j=1}^{N_h}\int_{J_{h}^j}H\Big(cm_l^i - cm_h^j + \delta\big( v_{l}^i(z)-v_{h}^j(\xi) \big)\Big)d\xi  + A_l^i\delta v_l^i(z) + \beta_l^i,\quad z\in J_l^i.
\end{aligned}
\end{equation}
Let us denote with $\bar{z}_l^i$ the middle point of each interval $J_l^i.$ Then, in order to recover the integration constants $\beta_l^i$, we substitute $\bar{z}_l^i$ in \eqref{Pseudo-inverse4} which yields
\begin{equation}
\label{Pseudo-inverse5}
\begin{aligned}
\beta_l^i =\, \delta^3 \bar{z}_l^i & - \sum_{j=1}^{N_l}\int_{J_{l}^j}G_{l}\big(cm_l^i - cm_l^j - \delta v_{l}^j(\xi) \big)d\xi 
\\
& \hspace{2cm}- \alpha_{l}\sum_{j=1}^{N_h}\int_{J_{h}^j}H\big(cm_l^i - cm_h^j - \delta v_{h}^j(\xi)\big)d\xi.
\end{aligned}
\end{equation}
As a consequence of all above computations, we get the following relation for $z\in J_l^i.$
\begin{equation}
\label{Pseudo-inverse6}
\begin{aligned}
\delta^3 (z - \bar{z}_l^i)  = & \,\sum_{j=1}^{N_l}\int_{J_{l}^j}G_{l}\Big(cm_l^i - cm_l^j + \delta\big( v_{l}^i(z)-v_{l}^j(\xi)\big) \Big) - G_l\big(cm_l^i - cm_l^j - \delta v_{l}^j(\xi) \big) \,d\xi 
\\
& -\delta v_l^i(z) \sum_{j=1}^{N_l}\int_{J_{l}^j}S_{l}\Big(cm_l^i - cm_l^j + \delta\big( \lambda_l^i - v_{l}^j(\xi)\big) \Big) \,d\xi 
\\
& + \alpha_{l}\sum_{j=1}^{N_h}\int_{J_{h}^j}H\Big(cm_l^i - cm_h^j + \delta\big( v_{l}^i(z)-v_{h}^j(\xi) \big)\Big) - H\big(cm_l^i - cm_h^j - \delta v_{h}^j(\xi)\big) \,d\xi
\\
& - \delta v_l^i(z) \alpha_{l}\sum_{j=1}^{N_h}\int_{J_{h}^j}K\Big(cm_l^i - cm_h^j + \delta\big( \lambda_l^i - v_{h}^j(\xi) \big)\Big) \,d\xi .
\end{aligned}
\end{equation}
If we define, for  $p = (v_{\rho}^1,\ldots,v_{\rho}^{N_\rho},v_{\eta}^1,\ldots,v_{\eta}^{N_\eta})$
\begin{equation}
\label{functional equation}
\begin{aligned}
& \mathcal{F}_l^i[p;\delta](z) 
\\
&= \,  \bar{z}_l^i - z  +  \, \delta^{-3}\Bigg[ \sum_{j=1}^{N_l}\int_{J_{l}^j}G_{l}\Big(cm_l^i - cm_l^j + \delta\big( v_{l}^i(z)-v_{l}^j(\xi)\big) \Big) - G_l\big(cm_l^i - cm_l^j - \delta v_{l}^j(\xi) \big) \,d\xi 
\\
& -\delta v_l^i(z) \sum_{j=1}^{N_l}\int_{J_{l}^j}S_{l}\Big(cm_l^i - cm_l^j + \delta\big( \lambda_l^i - v_{l}^j(\xi)\big) \Big) \,d\xi 
\\
& + \alpha_{l}\sum_{j=1}^{N_h}\int_{J_{h}^j}H\Big(cm_l^i - cm_h^j + \delta\big( v_{l}^i(z)-v_{h}^j(\xi) \big)\Big) - H\big(cm_l^i - cm_h^j - \delta v_{h}^j(\xi)\big) \,d\xi
\\
& - \delta v_l^i(z) \alpha_{l}\sum_{j=1}^{N_h}\int_{J_{h}^j}K\Big(cm_l^i - cm_h^j + \delta\big( \lambda_l^i - v_{h}^j(\xi) \big)\Big) \,d\xi \Bigg], \quad z\in J_l^i.
\end{aligned}
\end{equation}
we have that \eqref{compact} reduces to the equation $\mathcal{F}_l^i[p;\delta](z) = 0$. In the following we compute the Taylor expansion of $\mathcal{F}_l^i[p;\delta](z)$ around $\delta = 0$. Let us  begin with the first integral on the r.h.s. of \eqref{functional equation}, i.e.,
\begin{equation}
\label{Taylor1}
\begin{aligned}
& \int_{J_{l}^j}G_{l}\Big(cm_l^i - cm_l^j + \delta\big( v_{l}^i(z)-v_{l}^j(\xi)\big) \Big) -  G_l\big(cm_l^i - cm_l^j - \delta v_{l}^j(\xi) \big) \,d\xi
\\
& = \Big[ S_l(cm_l^i - cm_l^j) \delta v_{l}^i(z) +  \frac{\delta^2}{2} S'_l(cm_l^i - cm_l^j)(v_{l}^i(z))^2  +   \frac{\delta^3}{6} S''_l(cm_l^i - cm_l^j)(v_{l}^i(z))^3 \Big] |J_l^j|
\\
& \hspace{5.5cm} + \int_{J_{l}^j}  \frac{\delta^3}{2} S''_l(cm_l^i - cm_l^j)\big( v_{l}^j(\xi) \big)^2 v_l^i(z) \,d\xi +R(S'''_l,\delta^4),
\end{aligned}
\end{equation}
where we used the fact that $\int_{J_l^i}v_l^i(\xi)\,d\xi = 0$ and $R(S'''_l,\delta^4)$ is a remainder term. For the second integral we have
\begin{equation}
\label{Taylor2}
\begin{aligned}
& - \delta v_l^i(z) \int_{J_{l}^j}S_{l}\Big(cm_l^i - cm_l^j + \delta\big( \lambda_{l}^i - v_{l}^j(\xi)\big) \Big) \,d\xi
\\
& = \Big[ - S_l(cm_l^i - cm_l^j) \delta v_{l}^i(z) - \delta^2 S'_l(cm_l^i - cm_l^j) \lambda_l^i v_{l}^i(z) - \frac{\delta^3}{2} S''_l(cm_l^i - cm_l^j) (\lambda_l^i)^2 v_{l}^i(z)  \Big] |J_l^j|
\\
& \hspace{5.5cm} - \int_{J_{l}^j}  \frac{\delta^3}{2} S''_l(cm_l^i - cm_l^j)\big( v_{l}^j(\xi) \big)^2 v_l^i(z) \,d\xi+R(S'''_l,\delta^4).
\end{aligned}
\end{equation}
Summing up the contributions in \eqref{Taylor1} to \eqref{Taylor2}, we get that the \emph{self-interaction} part in \eqref{functional equation} reduces to
\begin{equation}\label{Taylor3}
 \delta^3 \Big[ \frac{\delta^{-1}}{2} S'_l( cm_l^i - cm_l^j ) v_l^i(z)(v_l^i(z)-2\lambda_l^i)  +  \frac{1}{6} S''_l(cm_l^i - cm_l^j)  \big( (v_{l}^i(z))^3 - 3 v_{l}^i(z) (\lambda_l^i)^2 \big) \Big] |J_l^j|+R(S'''_l,\delta^4).
\end{equation}
Similarly, for the \emph{cross-interaction} part we obtain
\begin{equation}
\label{Taylor4}
 \delta^3 \Big[ \frac{\delta^{-1}}{2} K'( cm_l^i - cm_h^j ) v_l^i(z)(v_l^i(z)-2\lambda_l^i)  +  \frac{1}{6} K''(cm_l^i - cm_h^j)  \big( (v_{l}^i(z))^3 - 3 v_{l}^i(z) (\lambda_l^i)^2 \big) \Big] |J_h^j|+R(K''',\delta^4).
\end{equation}
Putting together the contributions of \eqref{Taylor3} and \eqref{Taylor4} in the functional equation \eqref{functional equation}, we get
\begin{equation}\label{functinal1}
\begin{aligned}
\mathcal{F}_l^i[p;\delta](z) &=  \,  (\bar{z}_l^i - z) +  \frac{D_{l}^i}{6} \Big( 3 v_{l}^i(z) (\lambda_l^i)^2 - (v_{l}^i(z))^3 \Big)   +  \delta^{-1} \frac{B_{l}^i}{2} v_l^i(z)(v_l^i(z)-2\lambda_l^i) +R(S'''_l,K''',\delta^4),
\end{aligned}
\end{equation}
where we used the notations introduced in \eqref{dli} and \eqref{bli}, namely
$$
D_{l}^i = - \sum_{j=1}^{N_l} S''_l(cm_l^i - cm_l^j) |J_l^j| -\alpha_l \sum_{j=1}^{N_h}  K''(cm_l^i - cm_h^j ) |J_h^j| ,
$$
and
$$
B_{l}^i = \sum_{j=1}^{N_l} S'_l( cm_l^i - cm_l^j ) |J_l^j| +  \alpha_l\sum_{j=1}^{N_h} K'( cm_l^i - cm_h^j ) |J_h^j|.
$$
Note that since the values $cm_l^i$ satisfy \eqref{eq:cmi} we have that $B_{l}^i=0$.  After the manipulations above, equation $\mathcal{F}_l^i[p;0](z) = 0$ reads as
\begin{equation}\label{eq:Bare_PI}
       (\bar{z}_l^i - z)  + \frac{D_{l}^i}{6} \Big( 3 v_{l}^i(z) (\lambda_l^i)^2 - (v_{l}^i(z))^3 \Big)  = 0, \quad z\in J_l^i,
\end{equation}
that gives a unique solution once the value of $\lambda_l^i$ is determined. In order to do that, let us introduce the following quantity 
\begin{equation}\label{Lambda}
    \Lambda_l^i[p;\delta] = \mathcal{F}_l^i[p;\delta](\tilde{z}_l^i)
\end{equation}
Performing Taylor expansions similar to the ones in \eqref{Taylor1} and \eqref{Taylor2} we get that
\begin{equation}\label{functinal2}
\begin{aligned}
\Lambda_l^i[p;0] & =  \,  (\bar{z}_l^i - \tilde{z}_l^i) +  \frac{D_{l}^i}{3} (\lambda_l^i)^3,
\end{aligned}
\end{equation}
and we are now in the position to solve
\begin{equation}\label{systemdelta0}
\begin{dcases}
   (\bar{z}_l^i - z)  + \frac{D_{l}^i}{6} \Big( 3 v_{l}^i(z) (\lambda_l^i)^2 - (v_{l}^i(z))^3 \Big)  = 0,
\\
(\bar{z}_l^i - \tilde{z}_l^i) +  \frac{D_{l}^i}{3} (\lambda_l^i)^3  = 0.
\end{dcases}
\end{equation}
The second equation in \eqref{systemdelta0} admits a solution once we have that $D_l^i > 0$, and $\lambda_l^i$ is uniquely determined by
\begin{equation}
\lambda_l^i = \left(\frac{3(\tilde{z}_l^i - \bar{z}_l^i)}{D_l^i} \right)^{1/3}.
\end{equation}
By construction $\lambda_l^i\geq \lambda_l^j$ if $i\geq j$, and this implies that equation \eqref{systemdelta0} admits the unique solution $\bar{v}_l^i$ which can be recovered as the pseudo inverse of the following Barenblatt type profiles
\begin{equation}
\begin{aligned}
\bar{\rho}^i(x)& = \frac{D_\rho^i}{2}\left((\lambda_\rho^i)^2-(x-cm_\rho^i)^2\right)\mathds{1}_{I_{\rho}^i}(x), \quad i=1,\ldots,N_\rho,
\\
\bar{\eta}^h(x)& = \frac{D_\eta^h}{2}\left((\lambda_\eta^h)^2-(x-cm_\eta^h)^2\right)\mathds{1}_{I_{\eta}^h}(x),\quad h=1,\ldots,N_\eta,
\end{aligned}
\end{equation}
where the intervals $I_{\rho}^i=\left[l_\rho^i,r_\rho^i\right]$ and $I_{\eta}^h=\left[l_\eta^h,r_\eta^h\right]$ are determined imposing
\[
 l_k^{i_k} = cm_k^{i_k}-\lambda_k^{i_k}, \quad r_k^{i_k} = cm_k^{i_k}+\lambda_k^{i_k}, \quad i_k=1,\ldots,N_k,\,k=\rho,\eta.
\]

We are now ready to reformulate \eqref{Pseudo-inverse6} as a functional equation on a proper Banach space. Consider the spaces
\begin{equation}\label{eq:Omegai}
    \Omega_l^i = \left\{v \in L^\infty\left(\left[\bar{z}_l^i,\tilde{z}_l^i\right)\right)\,|\, v \mbox{ increasing},\, v(\bar{z}_l^i)=0\right\},\, i = 1,\ldots,N_l,\, l\in\{\rho,\eta\},
\end{equation}
endowed with the  $L^\infty$ norm and take the product spaces
\[
 \Omega_l = \bigtimes_{i=1}^{N_l}\Omega_l^i,\,\mbox{ for } l\in\{\rho,\eta\}.
\]
We now introduce the space $\Omega$ defined by
\begin{equation}\label{eq:Omega}
    \Omega = \Omega_\rho \times \R^{N_\rho}\times\Omega_\eta \times \R^{N_\eta},
\end{equation}
with elements $\omega=(v_\rho^1,\ldots,v_\rho^{N_\rho},\lambda_\rho^1,\ldots,\lambda_\rho^{N_\rho},v_\eta^1,\ldots,v_\eta^{N_\eta},\lambda_\eta^1,\ldots,\lambda_\eta^{N_\eta}) $ endowed with the norm
\begin{equation}
|||\omega ||| = \sum_{i=1}^{N_\rho}\Big(\|v_{\rho}^i\|_{L^{\infty}}+ |\lambda_{\rho}^i| \Big)+ \sum_{i=1}^{N_\eta}\Big(\|v_{\eta}^i\|_{L^{\infty}}  + |\lambda_{\eta}^i|\Big).
\end{equation}
For $\gamma>0$, calling $\tilde{J}_l^i= \left[\bar{z}_l^i,\tilde{z}_l^i\right)$,  we consider the norm
\begin{equation}\label{alpha-norm}
 ||| \omega|||_{\gamma} = ||| \omega||| + \sum_{l\in \{ \rho,\eta \} }\sum_{i=1}^{N_l} \sup_{z\in \tilde{J}_l^i}\frac{|\lambda_l^i-v_l^i(z)|}{(\tilde{z}_l^i - z)^{\gamma}},
 \end{equation}
and set $\Omega_\gamma := \{ \omega \in\Omega ~:~ |||\omega|||_\gamma < +\infty\}$. For a given $\omega\in\Omega$, we define the operator $\mathcal{T}:\Omega_{\frac{1}{2}}\rightarrow \Omega_1$
\begin{equation}\label{eq:oper}
\mathcal{T}[\omega;\delta](z) := 
\begin{pmatrix}
\mathcal{F}_{\rho}[\omega;\delta](z) 
\\[0.2cm]
\Lambda_{\rho}[\omega;\delta]
\\[0.2cm]
\mathcal{F}_{\eta}[\omega;\delta](z) 
\\[0.2cm]
\Lambda_{\eta}[\omega;\delta]
\end{pmatrix}
\end{equation}
where for $l\in \{ \rho,\eta \}$ we have shorted the notation introducing
\begin{equation}\label{functionalsum}
\mF_{l}[\omega;\delta](z)  = \left( \mF_l^1[\omega;\delta](z),\ldots,\mF_l^{N_l}[\omega;\delta](z)  \right),\quad 
\Lambda_{l}[\omega;\delta](z)  :=\left( \Lambda_l^1[\omega;\delta],\ldots,\Lambda_l^{N_l}[\omega;\delta] \right).
\end{equation}
The operator $\mathcal{T}$ is a bounded operator for any fixed $\delta\geq 0$ and can be continuously extended at $\delta = 0$ to \eqref{eq:Bare_PI} and \eqref{functinal2}. In order to prove existence of stationary solutions for small $\delta > 0$ using the Implicit
Function Theorem, we need to prove that the Jacobian matrix of $\mathcal{T}$ is a bounded linear operator form $\Omega_{1/2}$ to $\Omega_1$ with bounded inverse. The Jacobian of $\mathcal{T}$ has the following structure
\begin{equation}\label{Jac_Mat}
D\mathcal{T}[\omega;\delta] = \begin{pmatrix} 
D_{v_\rho}\mF_\rho(\delta) 
& 
D_{\lambda_\rho}\mF_\rho(\delta) 
&
D_{v_\eta}\mF_\rho(\delta) 
& 
D_{\lambda_\eta}\mF_\rho(\delta) 
\\[0.2 cm]
D_{v_\rho}\Lambda_\rho(\delta) 
& 
D_{\lambda_\rho}\Lambda_\rho(\delta) 
&
D_{v_\eta}\Lambda_\rho(\delta) 
& 
D_{\lambda_\eta}\Lambda_\rho(\delta) 
\\[0.2 cm]
D_{v_\rho}\mF_\eta(\delta) 
& 
D_{\lambda_\rho}\mF_\eta(\delta) 
&
D_{v_\eta}\mF_\eta(\delta) 
& 
D_{\lambda_\eta}\mF_\eta(\delta) 
\\[0.2 cm]
D_{v_\rho}\Lambda_\eta(\delta) 
& 
D_{\lambda_\rho}\Lambda_\eta(\delta) 
&
D_{v_\eta}\Lambda_\eta(\delta) 
& 
D_{\lambda_\eta}\Lambda_\eta(\delta) 
\end{pmatrix},
\end{equation}
where the components are actually matrices defined by
\begin{equation*}
    \begin{aligned}
     &D_{v_h}\mF_l(\delta)   = \left(\frac{ \partial\mF_l^i[\omega;\delta]}{\partial v_h^j }(\nu_h^j) \right)_{i,j=1}^{N_l,N_h}, \quad  D_{\lambda_h}\mF_l(\delta)   = \left(\frac{ \partial\mF_l^i[\omega;\delta]}{\partial \lambda_h ^j}(a_h^j) \right)_{i,j=1}^{N_l,N_h} \\
     &D_{v_h}\Lambda_l(\delta)   = \left(\frac{ \partial\Lambda_l^i[\omega;\delta]}{\partial v_h ^j}(\nu_h^j) \right)_{i,j=1}^{N_l,N_h}, \quad  D_{\lambda_h}\Lambda_l(\delta)   = \left(\frac{ \partial\Lambda_l^i[\omega;\delta]}{\partial \lambda_h^j }(a_h^j) \right)_{i,j=1}^{N_l,N_h},
    \end{aligned}
\end{equation*}
where $\nu_h^j$ and $a_h^j$ are generic directions.  We first compute the diagonal terms in the matrix $D_{v_l}\mF_l(\delta)$. We have
\begin{equation*}
\begin{aligned}
& \frac{\partial\mF_l^i[\omega;\delta]}{\partial v_l^i}(\nu_l^i)  =
-\delta ^{-2}\int_{J_{l}^i} \nu_{l}^i(\xi) \Bigg [S_{l}\Big( \delta\big( v_{l}^i(z)-v_{l}^i(\xi)\big) \Big)
- \delta  v_l^i(z) S'_{l}\Big(\delta\big( \lambda_l^i - v_{l}^i(\xi)\big) \Big) - S_l\Big( -\delta v_{l}^i(\xi) \Big) \Bigg] \,d\xi 
\\
& + \delta^{-2}\nu_{l}^i(z) \sum_{j=1 }^{N_l}\int_{J_{l}^j}\Bigg[S_{l}\Big(cm_l^i - cm_l^j + \delta\big( v_{l}^i(z)-v_{l}^j(\xi)\big) \Big) -S_{l}\Big(cm_l^i - cm_l^j + \delta\big( \lambda_l^i - v_{l}^j(\xi)\big) \Big) \Bigg] \,d\xi 
\\
& + \delta^{-2}\nu_{l}^i(z)\alpha_{l}\sum_{j=1}^{N_h}\int_{J_{h}^j}\Bigg[K\Big(cm_l^i - cm_h^j + \delta\big( v_{l}^i(z)-v_{h}^j(\xi) \big)\Big)-K\Big(cm_l^i - cm_h^j + \delta\big( \lambda_l^i - v_{h}^j(\xi) \big)\Big)\Bigg]d\xi.
\end{aligned}
\end{equation*}
A Taylor expansion around $\delta = 0$ similar to the ones in \eqref{Taylor1} - \eqref{Taylor4} easily  gives that in the limit $\delta\to 0$ we obtain
\begin{equation*}
\frac{\partial\mF_l^i[\omega;0]}{\partial v_l^i}(\nu_l^i) =  \,     \frac{D_l^i}{2} \Big(  (\lambda_l^i)^2  -  (v_{l}^i(z))^2  \Big) \nu_{l}^i(z).
\end{equation*}
Concerning the other terms in $D_{v_l}\mF_l(\delta)$ we get
\begin{equation*}
\begin{aligned}
 \frac{\partial\mF_l^i[\omega;\delta]}{\partial v_l^j}(\nu_l^j)  =-\delta ^{-2}\sum_{j=1}^{N_l}\int_{J_{l}^j}\nu_{l}^j(\xi)\Bigg[& S_{l}\Big( cm_l^i - cm_l^j+\delta\big( v_{l}^i(z)-v_{l}^j(\xi)\big) \Big)-S_l\Big( (cm_l^i - cm_l^j-\delta v_{l}^j(\xi) \Big)\\
 & -\delta v_l^i(z)S'_{l}\Big(cm_l^i - cm_l^j+\delta\big( \lambda_l^i - v_{l}^j(\xi)\big) \Big) \Bigg] \,d\xi,
\end{aligned}
\end{equation*}
that all vanish in the limit $\delta\to 0$. Let us now focus on the matrix $D_{\lambda_l}\mF_l(\delta)$. By \eqref{functional equation} it is easy to see that the only non-zero terms in $D_{\lambda_l}\mF_l(\delta)$ are the diagonal ones that are given by
\begin{equation*}
\begin{aligned}
 \frac{\partial\mF_l^i[\omega;\delta]}{\partial \lambda_l^i}(a_l^i) = 
\, -\delta^{-1}v_l^i(z)a_l^i\Bigg[ 
 & \sum_{j=1}^{N_l}\int_{J_{l}^j}S'_{l}\Big(cm_l^i - cm_l^j + \delta\big( \lambda_l^i - v_{l}^j(\xi)\big) \Big)  \,d\xi 
\\
& + \alpha_{l}\sum_{j=1}^{N_h}\int_{J_{h}^j}K'\Big(cm_l^i - cm_h^j + \delta\big( \lambda_l^i - v_{h}^j(\xi) \big)\Big)  \,d\xi \Bigg].
\end{aligned}
\end{equation*}
Then, Taylor expansion w.r.t. $\delta$ yields
\begin{equation*}
\begin{aligned}
\frac{\partial\mF_l^i[\omega;0]}{\partial \lambda_l^i}(a_l^i)  =  \,    D_l^i \lambda_l^i v_{l}^i(z) a_l^i.
\end{aligned}
\end{equation*}
Since all the entrances in the matrix $D_{\lambda_h}\mF_l(\delta)$ are zero, the last matrix that concerns $\mF_l^i$ is $D_{v_h}\mF_l(\delta)$. The elements of this matrix are given by
\begin{equation*}
\begin{aligned}
 \frac{\partial\mF_l^i[\omega;\delta]}{\partial v_h^j}(\nu_h^j)  =-\delta ^{-2}\alpha_l\sum_{j=1}^{N_h}\int_{J_{h}^j}\nu_{h}^j(\xi)\Bigg[& K\Big( cm_l^i - cm_h^j+\delta\big( v_{l}^i(z)-v_{h}^j(\xi)\big) \Big)-K\Big( cm_l^i - cm_h^j-\delta v_{h}^j(\xi) \Big)\\
 & -\delta v_l^i(z)K'\Big(cm_l^i - cm_h^j+\delta\big( \lambda_l^i - v_{h}^j(\xi)\big) \Big) \Bigg] \,d\xi,
\end{aligned}
\end{equation*}
that vanish in the limit $\delta\to 0$. We now start in computing the functional derivatives for $\Lambda_l^i$ in \eqref{Lambda}. Again we should consider the four matrix in \eqref{Jac_Mat}, and we start from $D_{v_l}\Lambda_l(\delta)$. Note that the terms in the diagonal are zero in this case and the others are given by
\begin{equation*}
\begin{aligned}
 \frac{\partial\Lambda_l^i[\omega;\delta]}{\partial v_l^j}(\nu_l^j)  =-\delta ^{-2}\sum_{j=1}^{N_l}\int_{J_{l}^j}\nu_{l}^j(\xi)\Bigg[& S_{l}\Big( cm_l^i - cm_l^j+\delta\big( \lambda_{l}^i(z)-v_{l}^j(\xi)\big) \Big)-S_l\Big( (cm_l^i - cm_l^j-\delta v_{l}^j(\xi) \Big)\\
 & -\delta\lambda_l^i S'_{l}\Big(cm_l^i - cm_l^j+\delta\big( \lambda_l^i - v_{l}^j(\xi)\big) \Big) \Bigg] \,d\xi.
\end{aligned}
\end{equation*}
The terms in $D_{v_h}\Lambda_l(\delta)$ are
\begin{equation*}
\begin{aligned}
 \frac{\partial\Lambda_l^i[\omega;\delta]}{\partial v_h^j}(\nu_h^j)  =-\delta ^{-2}\alpha_l\sum_{j=1}^{N_h}\int_{J_{h}^j}\nu_{h}^j(\xi)\Bigg[& K\Big( cm_l^i - cm_h^j+\delta\big( \lambda_{l}^i-v_{h}^j(\xi)\big) \Big)-K\Big( cm_l^i - cm_h^j-\delta v_{h}^j(\xi) \Big)\\
 & -\delta\lambda_l^i K'\Big(cm_l^i - cm_h^j+\delta\big( \lambda_l^i - v_{h}^j(\xi)\big) \Big) \Bigg] \,d\xi,
\end{aligned}
\end{equation*}
and the usual Taylor expansions around $\delta = 0$, shows that both the matrices  $D_{v_l}\Lambda_l(0)= D_{v_h}\Lambda_l(0)= 0$. Since $D_{\lambda_h}\Lambda_l(\delta)$ is trivially a zero matrix, only remains to compute the diagonal terms in $D_{\lambda_l}\Lambda_l(\delta)$. We have
\begin{equation*}
\begin{aligned}
 \frac{\partial\Lambda_l^i[\omega;\delta]}{\partial \lambda_l^i}(a_l^i)&=  - \delta^{-1}a_l^i \sum_{j=1}^{N_l}\int_{J_{l}^j} \lambda_l^i S'_{l}\Big(cm_l^i - cm_l^j + \delta\big( \lambda_l^i - v_{l}^j(\xi)\big) \Big) \,d\xi 
\\
&  - \delta^{-1}a_l^i\alpha_l \sum_{j=1}^{N_h}\int_{J_{h}^j} \lambda_l^i K'_{l}\Big(cm_l^i - cm_h^j + \delta\big( \lambda_l^i - v_{h}^j(\xi)\big) \Big) \,d\xi 
\\
\end{aligned}
\end{equation*}
The last Taylor expansion gives
\begin{equation*}
 \frac{\partial\Lambda_l^i[\omega;0]}{\partial \lambda_l^i}(a_l^i) = D_l^i (\lambda_l^i)^2 a_l^i.
\end{equation*}
We have proved that
\begin{equation}\label{matrix operator}
D\mathcal{T}[\omega;0] =
\begin{pmatrix} 
\diag\left(\frac{D_{\rho}^i}{2} \Big(  (\lambda_{\rho}^i)^2  -  (v_{\rho}^i)^2  \Big) \nu_{\rho}^i \right)
& \diag \left(D_{\rho}^i \lambda_{\rho}^i v_{\rho}^i a_{\rho}^i \right)
&
0
& 
0
\\[0.3cm] 
0 
&\diag \left(D_{\rho}^i (\lambda_{\rho}^i)^2 a_{\rho}^i \right) 
& 
0 
& 
0 
\\[0.3cm] 
0 
& 
0
& \hspace{-1cm}
\diag \left(\frac{D_{\eta}^i}{2} \Big(  (\lambda_{\eta}^i)^2  -  (v_{\eta}^i)^2  \Big) \nu_{\eta}^i \right)
& \displaystyle
\diag \left(D_{\eta}^i \lambda_{\eta}^i v_{\eta}^i a_{\eta}^i \right)
\\[0.3cm] 
0 
& 
0
& 
0 
& \diag \left(D_{\eta}^i (\lambda_{\eta}^i)^2 a_{\eta}^i \right)
\end{pmatrix},
\end{equation}
with $\diag(A_i)$ diagonal matrix with elements $A_i$.
Let us denote by $\omega_0$ the unique solution to \eqref{systemdelta0}, we have the following lemma:
\begin{lem}\label{lemm:1}
For $\delta>0$ small enough, the operator $D\mathcal{T}[\omega_0;\delta]$ is a bounded linear operator from $\Omega_{1/2}$ to $\Omega_1$.
\end{lem}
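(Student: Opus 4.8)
The plan is to fix a direction $\mu=(\nu_\rho^1,\dots,\nu_\rho^{N_\rho},a_\rho^1,\dots,a_\rho^{N_\rho},\nu_\eta^1,\dots,\nu_\eta^{N_\eta},a_\eta^1,\dots,a_\eta^{N_\eta})\in\Omega_{1/2}$ and to bound the $\Omega_1$-norm of $D\mathcal{T}[\omega_0;\delta]\mu$ by $C\,|||\mu|||_{1/2}$ with $C$ independent of $\delta$ for $\delta\in[0,\delta_0)$. Since $D\mathcal{T}[\omega_0;0]$ is given in closed form by \eqref{matrix operator} and its entries depend on $\delta$ through the explicit integral expressions computed above, I would first establish the bound at $\delta=0$ and then show it persists for small $\delta$. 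The $\Omega_1$-norm splits, according to \eqref{alpha-norm} with $\gamma=1$, into a plain part $|||\cdot|||$ and the endpoint-weighted seminorm, and I treat the two separately.

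For the plain part the point is that every entry of $D\mathcal{T}[\omega;\delta]$ carries a prefactor $\delta^{-2}$ or $\delta^{-1}$ multiplying a bracket of kernel evaluations whose order-one and order-$\delta$ contributions cancel by construction. By the smoothness assumption (A1) a Taylor expansion then leaves a remainder of the matching order $\delta^2$ (resp. $\delta$), so the negative powers of $\delta$ are absorbed and, using that the fixed state $\omega_0$ has bounded components and that $\|\nu_l^i\|_{L^\infty},|a_l^i|\le|||\mu|||$, each block is bounded uniformly for $\delta\in[0,\delta_0)$. In particular the off-diagonal blocks, which vanish in \eqref{matrix operator}, are $O(\delta)$ and hence harmless.

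The heart of the matter is the weighted seminorm, where half a power of $(\tilde{z}_l^i-z)$ must be gained in passing from $\gamma=\tfrac12$ to $\gamma=1$. Evaluating at $\delta=0$ the difference between the scalar $\Lambda_l^i$-component and the functional $\mathcal{F}_l^i$-component of $D\mathcal{T}[\omega_0;0]\mu$ and using \eqref{matrix operator}, one finds the factorization
\begin{equation*}
\begin{aligned}
&D_l^i(\lambda_l^i)^2a_l^i-\tfrac{D_l^i}{2}\big((\lambda_l^i)^2-(v_l^i(z))^2\big)\nu_l^i(z)-D_l^i\lambda_l^i v_l^i(z)a_l^i
\\
&\qquad=D_l^i\big(\lambda_l^i-v_l^i(z)\big)\Big[\lambda_l^i\big(a_l^i-\nu_l^i(z)\big)+\tfrac12\big(\lambda_l^i-v_l^i(z)\big)\nu_l^i(z)\Big].
\end{aligned}
\end{equation*}
The prefactor $\lambda_l^i-v_l^i(z)$ reflects the Barenblatt edge behaviour of the solution $\omega_0$ to \eqref{systemdelta0}: since $\omega_0\in\Omega_{1/2}$, it vanishes like $(\tilde{z}_l^i-z)^{1/2}$ near the endpoint. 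The bracket is in turn bounded by $|a_l^i-\nu_l^i(z)|+|\lambda_l^i-v_l^i(z)|\,\|\nu_l^i\|_{L^\infty}$, which is of order $(\tilde{z}_l^i-z)^{1/2}$ precisely because $\mu\in\Omega_{1/2}$ controls $|a_l^i-\nu_l^i(z)|$ by $(\tilde{z}_l^i-z)^{1/2}$. Multiplying the two half powers produces a quantity of order $(\tilde{z}_l^i-z)$, so dividing by $(\tilde{z}_l^i-z)$ stays bounded by $C\,|||\mu|||_{1/2}$. This double cancellation --- one half power from the square-root profile of the stationary state, one from the weighted control of the direction --- is the main obstacle, and it explains why $\Omega_{1/2}$ and $\Omega_1$ are the correct source and target spaces.

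Finally I would upgrade the $\delta=0$ estimate to small $\delta>0$ by carrying the same factorization through the full integral expressions for the diagonal derivatives $\partial_{v_l^i}\mathcal{F}_l^i$, $\partial_{\lambda_l^i}\mathcal{F}_l^i$ and $\partial_{\lambda_l^i}\Lambda_l^i$, Taylor expanding the kernels about the separations $cm_l^i-cm_h^j$ and tracking the remainders uniformly in $\delta$. By (A1) these $\delta$-corrections are Lipschitz in $\delta$ on $[0,\delta_0)$, so the seminorm bound obtained at $\delta=0$ is stable under shrinking $\delta$. Summing the plain and weighted estimates over all $l\in\{\rho,\eta\}$ and $i=1,\dots,N_l$ yields $|||D\mathcal{T}[\omega_0;\delta]\mu|||_1\le C\,|||\mu|||_{1/2}$ with $C$ uniform for $\delta$ small, which is the assertion.
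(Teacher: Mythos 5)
Your proposal is correct and follows essentially the same route as the paper: uniform boundedness of the plain norm via Taylor expansion of the kernel brackets whose zeroth- and first-order terms cancel, and control of the weighted seminorm by pairing one half power of $(\tilde{z}_l^i-z)$ coming from the square-root edge behaviour of $\omega_0$ with the other half power coming from the $\Omega_{1/2}$-control of the direction. The only (welcome) difference is organizational: you make the $\delta=0$ bound explicit through the algebraic factorization of the $\Lambda_l^i$-minus-$\mathcal{F}_l^i$ difference and then perturb in $\delta$, whereas the paper directly estimates the difference $D\mathcal{T}[\cdot;\delta]-D\mathcal{T}[\cdot;0]$ and exhibits the same cancellation inside the $O(\delta)$ correction terms.
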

\begin{proof}
Thanks to the previous computations is easy to see that $D\mathcal{T}$ is a bounded linear operator from $\Omega$ into itself and it is continuous at $\delta = 0$. The definition of the norm in \eqref{alpha-norm} implies that   for $z\in \tilde{J}_l^i$ we need to control only that
\begin{equation}\label{sup1}
\begin{aligned}
\sup_{|||\omega|||_{1/2}\leq 1}&\frac{1}{(\tilde{z}_l^i - z)}\bigg| 
\frac{\partial \mathcal{F}_l^i}{\partial v_l^i}[\cdot,\delta](\nu_l^i)  
-\frac{\partial \Lambda_l^i}{\partial v_l^i}[\cdot;\delta](\nu_l^i)
-\bigg( \frac{\partial \mathcal{F}_l^i}{\partial v_l^i}[\cdot;0](\nu_l^i)  
-\frac{\partial \Lambda_l^i}{\partial v_l^i}[\cdot;0](\nu_l^i) \bigg)
\\
&+\frac{\partial \mathcal{F}_l^i}{\partial \lambda_l^i}[\cdot,\delta](a_l^i)  
-\frac{\partial \Lambda_l^i}{\partial \lambda_l^i}[\cdot;\delta](a_l^i)
-\bigg( \frac{\partial \mathcal{F}_l^i}{\partial \lambda_l^i}[\cdot;0](a_l^i)  
-\frac{\partial \Lambda_l^i}{\partial \lambda_l^i}[\cdot;0](a_l^i) \bigg)
\\
&+\frac{\partial \mathcal{F}_l^i}{\partial v_h^j}[\cdot,\delta](\nu_h^j) 
-\frac{\partial \Lambda_l^i}{\partial v_h^j}[\cdot;\delta](\nu_h^j)
-\bigg( \frac{\partial \mathcal{F}_l^i}{\partial v_h^j}[\cdot;0](\nu_h^j)  
-\frac{\partial \Lambda_l^i}{\partial v_h^j}[\cdot;0](\nu_h^j) \bigg)
\\
&+\frac{\partial \mathcal{F}_l^i}{\partial \lambda_h^j}[\cdot,\delta](a_h^j)
-\frac{\partial \Lambda_l^i}{\partial \lambda_h^j}[\cdot;\delta](a_h^j)
-\bigg( \frac{\partial \mathcal{F}_l^i}{\partial \lambda_h^j}[\cdot;0](a_h^j)  
-\frac{\partial \Lambda_l^i}{\partial \lambda_h^j}[\cdot;0](a_h^j) \bigg)
\bigg| \searrow 0 
\end{aligned}
\end{equation}
as $\delta\searrow 0$. We start estimating the third row in \eqref{sup1},
\begin{equation*}
\begin{aligned}
& \frac{1}{(\tilde{z}_l^i - z)}\Bigg[ 
\frac{\partial \mathcal{F}_l^i}{\partial v_h^j}[\cdot,\delta](\nu_h^j)  
-\frac{\partial \Lambda_l^i}{\partial v_h^j}[\cdot;\delta](\nu_h^j)
-\bigg( \frac{\partial \mathcal{F}_l^i}{\partial v_h^j}[\cdot;0](\nu_h^j)  
-\frac{\partial \Lambda_l^i}{\partial v_h^j}[\cdot;0](\nu_h^j) \bigg)\Bigg]\\
=& -\alpha_l \frac{(v_l^i(z)-\lambda_l^i)^2}{\tilde{z}_l^i-z}\sum_{j=1}^{N_h}\int_{J_h^j}\frac{K''\big(cm_l^i - cm_h^j + \delta(\lambda_l^i - v_h^j(\xi))\big)\nu_h^j(\xi)}{2} \, d\xi 
\\
&- \delta\alpha_l \frac{(v_l^i(z)-\lambda_l^i)^3}{\tilde{z}_l^i-z}\sum_{j=1}^{N_h}\int_{J_h^j}\frac{K'''(\tilde{x}(\xi))\nu_h^j(\xi)}{6} \, d\xi
\\
=& -\delta\alpha_l \frac{(v_l^i(z)-\lambda_l^i)^2}{\tilde{z}_l^i-z}\sum_{j=1}^{N_h}\int_{J_h^j}\frac{K'''(\bar{x}(\xi))(\lambda_l^i-v_h^j(\xi))\nu_h^j(\xi)}{2} \, d\xi 
\\
&-\delta\alpha_l \frac{(v_l^i(z)-\lambda_l^i)^3}{\tilde{z}_l^i-z}\sum_{j=1}^{N_h}\int_{J_h^j}\frac{K'''(\tilde{x}(\xi))\nu_h^j(\xi)}{6} \, d\xi.
\\
 = &-\alpha_l \left( \frac{(v_l^i(z)-\lambda_l^i)^2}{\tilde{z}_l^i-z} +\frac{(v_l^i(z)-\lambda_l^i)^3}{\tilde{z}_l^i-z} \right) O(\delta),
\end{aligned}
\end{equation*}
where in the first equality we did a Taylor expansion around the point $x_0 = cm_l^i - cm_h^j + \delta(\lambda_l^i - v_h^j(\xi))$ for the kernel $K\big(cm_l^i - cm_h^j + \delta( v_{l}^i(z)-v_{h}^j(\xi)) \big)$, while in the second equality we did a Taylor expansion around the point $x_0 = cm_l^i - cm_h^j$ for the kernel $K''\big(cm_l^i - cm_h^j + \delta(\lambda_l^i - v_h^j(\xi))\big)$. Similarly, we can show that
\begin{equation*}
\begin{aligned}
& \frac{1}{(\tilde{z}_l^i - z)}\Bigg[ 
\frac{\partial \mathcal{F}_l^i}{\partial \lambda_h^j}[\cdot,\delta](a_h^j)
-\frac{\partial \Lambda_l^i}{\partial \lambda_h^j}[\cdot;\delta](a_h^j)
-\bigg( \frac{\partial \mathcal{F}_l^i}{\partial \lambda_h^j}[\cdot;0](a_h^j)  
-\frac{\partial \Lambda_l^i}{\partial \lambda_h^j}[\cdot;0](a_h^j) \bigg)\Bigg] = 0.
\end{aligned}
\end{equation*}
The first two rows in \eqref{sup1} can be treated as follows,
\begin{equation*}
\begin{aligned}
& \frac{1}{(\tilde{z}_l^i - z)} \Bigg[ \frac{\partial \mathcal{F}_l^i}{\partial v_l^i}[\cdot,\delta](\nu_l^i)  
-\frac{\partial \Lambda_l^i}{\partial v_l^i}[\cdot;\delta](\nu_l^i)
-\bigg( \frac{\partial \mathcal{F}_l^i}{\partial v_l^i}[\cdot;0](\nu_l^i)  
-\frac{\partial \Lambda_l^i}{\partial v_l^i}[\cdot;0](\nu_l^i) \bigg)
\\
&+\frac{\partial \mathcal{F}_l^i}{\partial \lambda_l^i}[\cdot,\delta](a_l^i)  
-\frac{\partial \Lambda_l^i}{\partial \lambda_l^i}[\cdot;\delta](a_l^i)
-\bigg( \frac{\partial \mathcal{F}_l^i}{\partial \lambda_l^i}[\cdot;0](a_l^i)  
-\frac{\partial \Lambda_l^i}{\partial \lambda_l^i}[\cdot;0](a_l^i) \bigg) \Bigg]
\\
&= \,  \frac{\delta}{(\tilde{z}_l^i - z)}  \Bigg[ \sum_{j=1}^{N_l} \int_{J_l^j} \delta^{-2} (v_l^i(z) - \lambda_l^i)(\nu_l^i(z) - a_l^i) S'_l\big(cm_l^i - cm_l^j + \delta(\lambda_l^i - v_l^j(\xi))\big)
\\
&\qquad + \delta^{-1}\frac{1}{2} (v_l^i(z) - \lambda_l^i)^2(\nu_l^i(z) - \nu_l^j(\xi)) S''_l\big(cm_l^i - cm_l^j + \delta(\lambda_l^i - v_l^j(\xi))\big)
\\
& \qquad + \frac{1}{6} (v_l^i(z) - \lambda_l^i)^3(\nu_l^i(z) - \nu_l^j(\xi)) S'''_l\big(\tilde{x}_1(\xi)\big) \, d\xi
\\
&+\alpha_l\sum_{j=1}^{N_h} \int_{J_h^j} \delta^{-2} (v_l^i(z) - \lambda_l^i)(\nu_l^i(z) - a_l^i) K'\big(cm_l^i - cm_h^j + \delta(\lambda_l^i - v_h^j(\xi))\big)
\\
&\qquad + \delta^{-1}\frac{1}{2} (v_l^i(z) - \lambda_l^i)^2(\nu_l^i(z) - \nu_l^j(\xi)) K''\big(cm_l^i - cm_h^j + \delta(\lambda_l^i - v_h^j(\xi))\big)
\\
& \qquad + \frac{1}{6} (v_l^i(z) - \lambda_l^i)^3(\nu_l^i(z) - \nu_l^j(\xi)) K'''\big(\tilde{x}_2(\xi)\big) \, d\xi \Bigg]
\\
&= \, \Bigg( 2 \frac{(\nu_l^i(z) - a_l^i) (v_l^i(z) - \lambda_l^i)}{(\tilde{z}_l^i - z)} + (2\nu_l^i(z) - 1) \bigg( \frac{(v_l^i(z) - \lambda_l^i)^3}{(\tilde{z}_l^i - z)} + \frac{(v_l^i(z) - \lambda_l^i)^2}{(\tilde{z}_l^i - z)} \bigg) \Bigg) O(\delta).
\end{aligned}
\end{equation*}
Since the functions $v_l^i$ are components of a vector $\omega$ belonging to $\Omega_{1/2}$ the quantities
\[
\frac{\lambda_l^i-v_l^i(z)}{(\tilde{z}_l^i - z)^{1/2}}
\]
are uniformly bounded in $\tilde{J}_l^i$, that gives \eqref{sup1}.
\end{proof}
\begin{lem}\label{lemm:2}
For any $\delta>0$ small enough, $D\mathcal{T}[\omega_0; 0]:\Omega_{1/2}\rightarrow\Omega_{1}$ is a linear isomorphism.
\end{lem}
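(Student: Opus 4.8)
The plan is to exploit the explicit form of $D\mathcal{T}[\omega_0;0]$ in \eqref{matrix operator}. The crucial structural observation is that at $\delta = 0$ the cross-interaction contributions drop out, so the matrix is block-diagonal with respect to the two species: the blocks coupling $(\mathcal{F}_\rho,\Lambda_\rho)$ to $(v_\eta,\lambda_\eta)$ and $(\mathcal{F}_\eta,\Lambda_\eta)$ to $(v_\rho,\lambda_\rho)$ both vanish, and within each species the surviving $2\times2$ block is upper triangular in the variables $(v_l,\lambda_l)$. Hence $D\mathcal{T}[\omega_0;0]$ is an isomorphism if and only if, for each $l\in\{\rho,\eta\}$ and each $i$, the scalar lower-right entry and the functional upper-left entry are invertible on the respective weighted spaces. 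First I would dispatch the lower-right entry: it is multiplication by $D_l^i(\lambda_l^i)^2$, and since $D_l^i>0$ by hypothesis (ii) and $\lambda_l^i = \big(3(\tilde z_l^i - \bar z_l^i)/D_l^i\big)^{1/3}>0$ (because $\tilde z_l^i > \bar z_l^i$), this is a nonzero scalar, trivially invertible on $\R^{N_l}$.

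By upper-triangularity, inverting the full system amounts to first solving the $\Lambda$-equation $D_l^i(\lambda_l^i)^2 a_l^i = \mu_l^i$ for the scalar direction $a_l^i = \mu_l^i/\big(D_l^i(\lambda_l^i)^2\big)$, substituting, and then solving the $\mathcal{F}$-equation
\[
\frac{D_l^i}{2}\big((\lambda_l^i)^2 - (v_l^i(z))^2\big)\,\nu_l^i(z) = \phi_l^i(z) - D_l^i \lambda_l^i v_l^i(z)\,a_l^i
\]
for the functional direction $\nu_l^i$. The heart of the proof is the inversion of the multiplication operator on the left, whose coefficient $(\lambda_l^i)^2 - (v_l^i(z))^2$ degenerates at the right endpoint $z=\tilde z_l^i$, where $v_l^i(\tilde z_l^i)=\lambda_l^i$.

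I would control this degeneracy through the Barenblatt profile $\omega_0$. From \eqref{eq:Bare_PI}, writing $w=\lambda_l^i - v_l^i(z)$ and using $\tilde z_l^i - \bar z_l^i = \tfrac{D_l^i}{3}(\lambda_l^i)^3$, one gets $\tilde z_l^i - z = \tfrac{D_l^i}{2}\lambda_l^i w^2 + O(w^3)$, hence a two-sided bound $c(\tilde z_l^i - z)^{1/2} \le \lambda_l^i - v_l^i(z) \le C(\tilde z_l^i - z)^{1/2}$ on all of $\tilde J_l^i$. Consequently $(\lambda_l^i)^2 - (v_l^i)^2 = (\lambda_l^i - v_l^i)(\lambda_l^i + v_l^i) \sim (\tilde z_l^i - z)^{1/2}$, which is exactly the gap between the codomain weight $\gamma=1$ and the domain weight $\gamma=1/2$, and this is precisely why those exponents are chosen. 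To show the solution lies in $\Omega_{1/2}$ I would use the compatibility built into $\Omega_1$: an element $(\phi_l^i,\mu_l^i)\in\Omega_1$ satisfies $|\mu_l^i-\phi_l^i(z)|\le |||\cdot|||_1\,(\tilde z_l^i - z)$, so rewriting the numerator as $2(\phi_l^i(z)-\mu_l^i) + 2\mu_l^i(\lambda_l^i - v_l^i(z))/\lambda_l^i$ shows it vanishes like $(\tilde z_l^i - z)^{1/2}$, matching the denominator; dividing yields $\nu_l^i(\tilde z_l^i)=a_l^i$ and, after one further expansion, a bound on $\sup|a_l^i-\nu_l^i(z)|/(\tilde z_l^i - z)^{1/2}$ controlling the $\Omega_{1/2}$-seminorm by the $\Omega_1$-norm of $(\phi_l^i,\mu_l^i)$.

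Putting these together produces, for each $(\phi_l^i,\mu_l^i)\in\Omega_1$, a unique $(\nu_l^i,a_l^i)\in\Omega_{1/2}$ solving the triangular system, i.e.\ a bounded two-sided inverse; hence $D\mathcal{T}[\omega_0;0]$ is a linear isomorphism. The main obstacle is the degenerate multiplication operator in the $v$-block: establishing the sharp two-sided matching of the vanishing rate $(\tilde z_l^i - z)^{1/2}$ with the weight difference $1-\tfrac12$, uniformly on the whole interval and not merely asymptotically, together with verifying that division by the degenerate coefficient maps $\Omega_1$ boundedly back into $\Omega_{1/2}$.
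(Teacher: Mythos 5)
Your proposal is correct and follows essentially the same route as the paper: both invert the explicit block-triangular matrix \eqref{matrix operator} entrywise, observe that the scalar $\Lambda$-block is trivially invertible since $D_l^i>0$ and $\lambda_l^i>0$, and reduce the whole question to dividing by the degenerate coefficient $(\lambda_l^i)^2-(v_l^i)^2$, whose vanishing rate $(\tilde z_l^i-z)^{1/2}$ exactly bridges the weights $\gamma=1$ and $\gamma=1/2$. The only difference is that you derive the two-sided bound $\lambda_l^i-v_l^i(z)\sim(\tilde z_l^i-z)^{1/2}$ directly from the Barenblatt relation \eqref{eq:Bare_PI} (via the factorization $2\lambda^3-3v\lambda^2+v^3=(\lambda-v)^2(2\lambda+v)$), whereas the paper delegates this uniform boundedness to a citation of \cite[Lemma 4.4]{budif}; your version is self-contained and states the correct exponent.
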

\begin{proof}
Given $w\in\Omega_1$,  we have to prove that 
\begin{equation} \label{linearsystem1}
D\mathcal{T}[\omega_0;0] \omega = w,
\end{equation}  
admits a unique solution $\omega\in\Omega_{1/2}$ with the property
\begin{equation*}
||\omega||_{1/2}\leq C ||w||_{1}.
\end{equation*}
The determinant of the matrix in \eqref{matrix operator} is given by
\begin{equation}
\det D\mathcal{T} = \left(\prod_{i=1}^{N_{\rho}}\frac{(D_{\rho}^i)^2}{2} \Big(  (\lambda_{\rho}^i)^2  -  (v_{\rho}^i)^2  \Big) (\lambda_{\rho}^i)^2 \right) \cdot \left(\prod_{i=1}^{N_{\eta}}\frac{(D_{\eta}^i)^2}{2} \Big(  (\lambda_{\eta}^i)^2  -  (v_{\eta}^i)^2  \Big) (\lambda_{\eta}^i)^2  \right) ,
\end{equation}
that is always different from zero under the condition $D_l^i > 0$ and since $\big( v_l^i(z)-\lambda_l^i\big) < 0$ on $z\in[\bar{z}_l^i, \tilde{z}_l^i)$. Thanks to the structure in \eqref{matrix operator} and denoting with $\nu_l^i,a_l^i$ and $\sigma_l^i,k_l^i$ the generic entrances in $\omega$ and $w$ respectively, we easily get that
\begin{equation*}
\begin{aligned}
\nu_l^i(z) & =\frac{- 2 \sigma_l^i(z)}{D_l^i \big( (v_l^i(z))^2 - (\lambda_l^i)^2 \big)} + \frac{2 \lambda_l^i v_l^i(z) a_l^i}{ (v_l^i(z))^2 - (\lambda_l^i)^2 },
\\
a_l^i  & = \frac{k_l^i}{D_l^i(\lambda_l^i)^2},
\end{aligned}
\end{equation*}
that implies $||\nu_l^i||_{1/2}\leq C ||\sigma_l^i||_{\infty}$ for $i=1,2,\cdots,N_l,$ and $l \in \{\rho,\eta\}$. In order to close the argument, it is enough to note that the ratio
\begin{equation*}
\frac{a_l^i-\nu_l^i(z)}{(\tilde{z}_l^i - z)^{1/2}} ,
\end{equation*}
is uniformly bounded since $(\lambda_l^i - v_l^i(z))/(\tilde{z}_l^i-z)$ is uniformly bounded, see \cite[Lemma 4.4]{budif}.
\end{proof}
 We are now in the position of proving the main result of the paper, namely Theorem \ref{main_thm}, that we recall below for convenience.

\begin{thm}\label{main_thm_r}
Assume that the interaction kernels are under the assumptions (A1), (A2) and (A3). 
Consider $N_\rho,N_\eta\in\mathbb{N}$ and let $z_l^i$ be fixed positive numbers  for $i=1,2,\cdots,N_l,$ and $l \in \{\rho,\eta\}$. Consider two families of real numbers $\{cm_\rho^i\}_{i=1}^{N_\rho}$ and $\{cm_\eta^i\}_{i=1}^{N_\eta}$ such that
    \begin{itemize}
        \item[(i)]  $\{cm_\rho^i\}_{i=1}^{N_\rho}$ and $\{cm_\eta^i\}_{i=1}^{N_\eta}$ are stationary solutions of the purely non-local particle system, that is, for $i=1,2,\cdots,N_l,$ for  $l,h \in \{\rho,\eta\}$ and $l\neq h$,
\begin{equation*}
    B_{l}^i = \sum_{j=1}^{N_l} S'_l( cm_l^i - cm_l^j ) z_l^j+  \alpha_l\sum_{j=1}^{N_h} K'( cm_l^i - cm_h^j ) z_h^j=0,
\end{equation*}
\item[(ii)] the following quantities 
\begin{equation*}
 D_{l}^i = - \sum_{j=1}^{N_l} S''_l(cm_l^i - cm_l^j) z_l^j -\alpha_l \sum_{j=1}^{N_h}  K''(cm_l^i - cm_h^j ) z_h^j, 
\end{equation*}
are strictly positive, for all $i=1,2,\cdots,N_l$,  $l,h \in \{\rho,\eta\}$ and $l\neq h$.
\end{itemize}

Then, there exists
a constant $d_0$ such that for all $d\in (0, d_0)$ the stationary equation \eqref{stationary system1} admits a unique solution in the sense of Definition \ref{def:multi_bump} of the form
\[
    \rho(x) = \sum_{i=1}^{N_\rho}\rho^i(x)\mathds{1}_{I_\rho^i}(x)\quad\mbox{ and }\quad \eta (x) = \sum_{h=1}^{N_\eta}\eta^h(x)\mathds{1}_{I_\eta^h}(x)
\]
where 
\begin{itemize}
\item each interval $I_l^i$ is symmetric around $cm_l^i$ for all  $i=1,2,\cdots,N_l$,  $l \in \{\rho,\eta\}$,
\item $\rho^i$ and $\eta^j$ are $C^1$ and even w.r.t the center of $I_\rho^i$ and $I_\eta^j$ respectively, with masses $z_\rho^i$ and  $z_\eta^j$, for $i=1,...,N_\rho$ and $j=1,...,N_\eta$,
\item the solutions $\rho$ and $\eta$ have fixed masses 
\[ z_\rho=\sum_{i=1}^{N_\rho} z_\rho^i \mbox { and } z_\eta = \sum_{i=1}^{N_\eta} z_\eta^i,\]
respectively.
\end{itemize}
\end{thm}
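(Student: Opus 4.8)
\emph{Strategy.} The plan is to recognise the whole statement as a single application of the Implicit Function Theorem (in the Banach-space form of \cite[Theorem 15.1]{Del}) to the operator equation $\mathcal{T}[\omega;\delta]=0$, with $\mathcal{T}$ the map \eqref{eq:oper} and $d=\delta^3$. Three facts are needed, and all of them have essentially been assembled above. First, the explicit point $\omega_0$ solving \eqref{systemdelta0} satisfies $\mathcal{T}[\omega_0;0]=0$ by construction; here hypothesis (i), i.e.\ $B_l^i=0$, is exactly what cancels the singular $\delta^{-1}$ term in \eqref{functinal1} and lets $\mathcal{T}[\,\cdot\,;\delta]$ extend continuously down to $\delta=0$. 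Second, Lemma \ref{lemm:1} gives that $D\mathcal{T}[\omega_0;\delta]$ is a bounded linear operator $\Omega_{1/2}\to\Omega_1$ depending continuously on $\delta$ up to $\delta=0$. Third, Lemma \ref{lemm:2} identifies $D\mathcal{T}[\omega_0;0]$ as a linear isomorphism $\Omega_{1/2}\to\Omega_1$, its invertibility resting precisely on hypothesis (ii), $D_l^i>0$, together with the strict sign $v_l^i(z)-\lambda_l^i<0$ on $[\bar z_l^i,\tilde z_l^i)$ that keeps the determinant in \eqref{matrix operator} away from zero.

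\emph{Application of the IFT.} Granting these, I would invoke \cite[Theorem 15.1]{Del} to obtain a radius $\delta_0>0$ and a unique $C^1$ branch $\delta\mapsto\omega(\delta)\in\Omega_{1/2}$ with $\omega(0)=\omega_0$ and $\mathcal{T}[\omega(\delta);\delta]=0$ for all $\delta\in[0,\delta_0)$. Setting $d_0=\delta_0^3$ and, for $d\in(0,d_0)$, $\delta=d^{1/3}$, the components $v_l^i$ of $\omega(\delta)$ determine the pseudo-inverse branches $u_l^i=cm_l^i+\delta\,v_l^i$ on each $J_l^i$, and I recover the bump densities by inverting $\rho^i\!\big(u_\rho^i(z)\big)\,\partial_z u_\rho^i(z)=1$ (and the analogue for $\eta^h$), then reassembling $\rho=\sum_i\rho^i\mathds{1}_{I_\rho^i}$ and $\eta=\sum_h\eta^h\mathds{1}_{I_\eta^h}$. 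Reversing the chain of manipulations \eqref{Pseudo-inverse1}--\eqref{Pseudo-inverse6} shows that this pair solves \eqref{stationary system1} weakly, and the local uniqueness of the IFT branch yields the uniqueness asserted in the theorem with no extra work.

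\emph{Checking Definition \ref{def:multi_bump}.} It remains to verify that $(\rho,\eta)$ is a multiple bumps steady state in the exact sense required. Because $v_l^i$ lives in the odd class encoded by $\Omega_l^i$ (odd about $\bar z_l^i$, vanishing there), the map $u_l^i$ is odd about $\bar z_l^i$, whence each $I_l^i=\big[cm_l^i-\delta\lambda_l^i,\,cm_l^i+\delta\lambda_l^i\big]$ is symmetric about $cm_l^i$ and each $\rho^i,\eta^h$ is even about its centre; the prescribed masses $z_l^i=|J_l^i|$ are built into the partition $J_l=\bigcup_iJ_l^i$. Crucially the support radii $\delta\lambda_l^i$ \emph{shrink} as $\delta\to0$, so the bumps concentrate at the distinct centres $cm_l^i$ (consistently with the Dirac steady states of the particle system \eqref{eq:cmi}); hence, after possibly decreasing $d_0$, the intervals satisfy $I_l^i\cap I_l^j=\emptyset$ for $i\neq j$ and the stacked pseudo-inverse $u_l$ is globally increasing, i.e.\ a genuine rearrangement. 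Non-negativity is inherited from the limiting Barenblatt profile, and $C^1$ regularity follows from a short bootstrap in \eqref{Pseudo-inverse6}: its right-hand side is an integral of the $C^2$ kernels against $v$, so $u_l^i$ is smooth and strictly increasing in the interior of $J_l^i$, giving a smooth density there that matches the linear endpoint vanishing of the Barenblatt at $\partial I_l^i$.

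\emph{Main difficulty.} The genuine analytic difficulty of the argument is not in this final assembly but in the two lemmas it rests upon, and in particular in the correct choice of the weighted spaces $\Omega_{1/2}$ and $\Omega_1$. The $1/2$ weight in \eqref{alpha-norm} is dictated by the square-root behaviour $\lambda_l^i-v_l^i(z)\sim(\tilde z_l^i-z)^{1/2}$ of the pseudo-inverse near the edge of each bump, where the Barenblatt density vanishes linearly and $\partial_z u_l^i$ blows up; it is exactly this scaling that simultaneously keeps $D\mathcal{T}$ bounded from $\Omega_{1/2}$ into $\Omega_1$ (Lemma \ref{lemm:1}) and keeps its inverse bounded (Lemma \ref{lemm:2}). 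Within the proof proper, then, the point demanding the most care is making sure the abstract solution produced in $\Omega_{1/2}$ really translates into a classical multiple-bump profile---i.e.\ the degenerate boundary regularity and the $\delta$-uniform disjointness of supports---rather than the mechanical invocation of the IFT itself.
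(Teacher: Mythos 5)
Your proposal is correct and follows essentially the same route as the paper: both invoke the Implicit Function Theorem for $\mathcal{T}[\omega;\delta]=0$ at the base point $\omega_0$ solving \eqref{systemdelta0}, relying on Lemma \ref{lemm:1} and Lemma \ref{lemm:2} for boundedness and invertibility of $D\mathcal{T}[\omega_0;0]$, and then recover the densities by inverting $u_l^i=cm_l^i+\delta v_l^i$. If anything, your verification of Definition \ref{def:multi_bump} (the $\delta$-uniform disjointness of the supports and the evenness of each bump) is spelled out more carefully than in the paper's own proof, which passes over these points.
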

\begin{proof}
Consider $z_\rho$ and $z_\eta$ fixed masses and a set of points $cm_l^i$ for $i=1,2,\cdots,N_l,$ and $l \in \{\rho,\eta\}$ that satisfy (i) and (ii). The results in Lemma \ref{lemm:1} and Lemma \ref{lemm:2} imply that given $\mathcal{T}$ defined in \eqref{eq:oper}, the functional equation
\[
\mathcal{T}[\omega;\delta](z) = 0,
\]
admits a unique solution $\omega = ( v_\rho^1(z),\ldots,v_\rho^{N_\rho}(z),\lambda_\rho^1,\ldots,\lambda_\rho^{N_\rho},v_\eta^1(z),\ldots,v_\eta^{N_\eta}(z),\lambda_\eta^1,\ldots,\lambda_\eta^{N_\eta} )$ for $\delta >0$ small enough. The entrances $v_l^i(z)$ are solutions to \eqref{Pseudo-inverse6} for $z\in J_l^i$. Consider now $u_l^i$ defined for $z\in J_l^i$ as $u_l^i (z) = cm_l^i + \delta v_l^i(z)$. Differentiating  \eqref{Pseudo-inverse6} twice w.r.t $z$ we get that $v_l^i$ is differentiable  and strictly increasing for $z\in J_l^i$ and that $u_l^i$ is a solution to \eqref{compact}. Moreover $u_l^i$ is also strictly increasing and we can define the inverse $F_l^i$ that and its spatial derivative $\rho_l^i = \partial_x F_l^i$ is a solution to \eqref{stationary system1}.
\end{proof}

\begin{rem}\label{rem:conditions}
Note that conditions (ii) in Theorem \ref{main_thm_r} turn to be  conditions on the positions of the centres of masses and on the value of $\alpha$. Indeed, as a sufficient condition for $D_\rho^i> 0$ we can assume that all the differences between the centres of masses are in the range of concavity of the kernels. Moreover,  $D_\eta^i> 0$ is satisfied if 
$$\alpha <\min_{i=1,\ldots,N_\eta}\frac{ \sum_{j=1}^{N_\eta} S''_\eta(cm_\eta^i - cm_\eta^j) |J_\eta^j|}{  \sum_{j=1}^{N_\rho}  K''(cm_\eta^i - cm_\rho^j ) |J_\rho^j| }.
$$
Note that the above conditions are comparable to the ones we got in the proofs of Section \ref{sec:preliminaries} using the Krein-Rutmann approach.
\end{rem}

\section{Numerics and Perspectives}\label{sec:numerics}
In this section, we study numerically solutions to system \eqref{main-equation} using two different methods, the finite volume method introduced in \cite{CCH,CHS} and particles method studied in \cite{DFFRR,FRa}. We validate the results about the existence of the mixed steady state, separated steady state, and the multiple bumps steady states. Moreover, we perform some  examples to show the variation in the behavior of the solution to system \eqref{main-equation} under different choices of initial data and the parameter $\alpha$, which, in turn, suggests a future work on the stability of the solutions to system \eqref{main-equation}. Finally, traveling waves are detected under a special choice of initial data and value for the parameter $\alpha$.
We begin by sketching the particles method. This method essentially consists in a finite difference discretization in space to the pseudo inverse version of system \eqref{main-equation}
\begin{equation}
\label{main pseudo inverse}
\begin{dcases}
\partial_t u_{\rho}(z) = -\frac{d}{2}\partial_{z}\Big(\big(\partial_{z}u_{\rho}(z)\big)^{-2}\Big) + \int_{J_{\rho}}S_{\rho}^{\prime}\big(u_{\rho}(z)-u_{\rho}(\xi)\big)d\xi + \alpha_{\rho}\int_{J_{\eta}}K^{\prime}\big(u_{\rho}(z)-u_{\eta}(\xi)\big)d\xi,~~z\in J_{\rho}
\\
\partial_t u_{\eta}(z) = -\frac{d}{2}\partial_{z}\Big(\big(\partial_{z}u_{\eta}(z)\big)^{-2}\Big) + \int_{J_{\eta}}S_{\eta}^{\prime}\big(u_{\eta}(z)-u_{\eta}(\xi)\big)d\xi + \alpha_{\eta}\int_{J_{\rho}}K^{\prime}\big(u_{\eta}(z)-u_{\rho}(\xi)\big)d\xi,~~z\in J_{\eta},
\end{dcases}
\end{equation}
as the following: Let $N\in\mathbb{N}$, let $\{z^i\}_{i=1}^N$ be a sequence of points that partition the interval $[0,1]$ uniformly. Denote by $X_l^i(t):= u_l(t,z^i)$ the approximating particles of the pseudo inverse at each point $z^i$ of the partition. Assuming that the densities $\rho$ and $\eta$ are of unit masses, then we have the following approximating system of ODEs
\begin{equation}
\label{particles system1}
\begin{dcases}
\partial_t X_{\rho}^i(t) =& \frac{d}{2N} \Big(\big(\rho^{i-1}(t)\big)^{-2} - \big(\rho^{i}(t)\big)^{-2} \Big) + \frac{1}{N}\sum_{j=1}^N S_{\rho}^{\prime}\big(X_{\rho}^i(t)-X_{\rho}^j(t)\big)d\xi 
\\
& + \frac{\alpha_{\rho}}{N}\sum_{j=1}^N K^{\prime}\big(X_{\rho}^i(t)-X_{\eta}^j(t)\big)d\xi, \qquad i=1,\cdots N,
\\
\partial_t X_{\eta}^i(t) =& \frac{d}{2N} \Big(\big(\eta^{i-1}(t)\big)^{-2} - \big(\eta^{i}(t)\big)^{-2} \Big) + \frac{1}{N}\sum_{j=1}^N S_{\eta}^{\prime}\big(X_{\eta}^i(t)-X_{\eta}^j(t)\big)d\xi 
\\
& + \frac{\alpha_{\eta}}{N}\sum_{j=1}^N K^{\prime}\big(X_{\eta}^i(t)-X_{\rho}^j(t)\big)d\xi, \qquad i=1,\cdots N,
\end{dcases}
\end{equation}
where the densities are reconstructed as
\begin{equation}
\begin{dcases}
\rho^i(t) = \frac{1}{N ( X_l^{i+1}(t) - X_l^i(t) )}, \qquad i = 1,\cdots, N-1,
\\
\rho^0(t) = 0,
\\
\rho^N(t) = 0,
\end{dcases}
\end{equation}
and the same for the  $\eta^i(t)$. To this end, we are ready to solve this particle system by applying the Runge-Kutta MATLAB solver ODE23, with initial positions $\displaystyle X_l(0)= X_{l,0} = \{ X_{l,0}^i \}_{i=1}^N,~l=\rho,\eta$ determined by solving
$$
\int_{X_{\rho,0}^i}^{X_{\rho,0}^{i+1}} \rho(t=0) dX = \frac{1}{N-1}, \qquad i = 1,\cdots,N-1.
$$

The second method we use is the finite volume method which  introduced in \cite{CCH} and extended to systems in \cite{CHS}, that consists in a $1$D positive preserving finite-volume method for system \eqref{main-equation}. To do so, we first partition the computational domain into finite-volume cells $U_{i}=[x_{i-\frac{1}{2}},x_{i+\frac{1}{2}}]$ of a uniform size $\Delta x$ with $x_{i} = i\Delta x$, $i \in\{-s,~\dots,s \}$. Define
\begin{equation*}
\widetilde{\rho}_{i}(t) := \frac{1}{\Delta x}\int_{U_{i}}\rho(x,t)dx,\qquad \widetilde{\eta}_{i}(t) := \frac{1}{\Delta x}\int_{U_{i}}\eta(x,t)dx,
\end{equation*}
the averages of the solutions $\rho,\eta$ computed at each cell $U_{i}$. Then we integrate each equation in system \eqref{main-equation} over each cell $U_{i}$, and so we obtain a semi-discrete finite-volume scheme described by the following system of ODEs for $\overline{\rho}^{i}$ and $\overline{\eta}^{i}$ 
\begin{equation}
\label{volume ode1}
\begin{dcases}
\frac{d\widetilde{\rho}_{i}(t)}{dt} = -\frac{F_{i+\frac{1}{2}}^{\rho}(t) - F_{i-\frac{1}{2}}^{\rho}(t) }{\Delta x},
\\
\frac{d\widetilde{\eta}_{i}(t)}{dt} = -\frac{F_{i+\frac{1}{2}}^{\eta}(t) - F_{i-\frac{1}{2}}^{\eta}(t) }{\Delta x},
\end{dcases}
\end{equation}
where the numerical flux $F_{i+\frac{1}{2}}^l$, $l=\rho,\eta$, is considered as an approximation for the continuous fluxes $-\rho(\varepsilon\rho + S_{\rho}*\rho + \alpha_{\rho} K*\eta )_{x}$ and $-\eta(\varepsilon\eta + S_{\eta}*\eta + \alpha_{\eta} K*\rho )_{x} $ respectively. More precisely, the expression for $F_{i+\frac{1}{2}}^{\rho}$ is given by
\begin{equation}
F_{i+\frac{1}{2}}^{\rho} = \max (\vartheta_{\rho}^{i+1},0)\Big[ \widetilde{\rho}_{i} + \frac{\Delta x}{2}(\rho_{x})_{i} \Big] + \min (\vartheta_{\rho}^{i+1},0)\Big[ \widetilde{\rho}_{i} - \frac{\Delta x}{2}(\rho_{x})^{i} \Big],
\end{equation}
where
\begin{equation}
\vartheta_{\rho}^{i+1} =  - \frac{\varepsilon}{\Delta x} \big( \widetilde{\rho}_{i+1} - \widetilde{\rho}_{i} \big) - \sum_{j}\widetilde{\rho}_{j}\big( S_{\rho}(x_{i+1} - x_{j}) - S_{\rho}(x_{i} - x_{j}) \big) - \alpha_{\rho} \sum_{j}\widetilde{\eta}_{j}\big( K(x_{i+1} - x_{j}) - K(x_{i} - x_{j}) \big),
\end{equation}
and
\begin{equation}\label{minmod}
(\rho_{x})^{i} = \mbox{minmod} \Bigg( 2\frac{\widetilde{\rho}_{i+1} - \widetilde{\rho}_{i}}{\Delta x},~ \frac{\widetilde{\rho}_{i+1} - \widetilde{\rho}_{i-1}}{2\Delta x},~ 2\frac{\widetilde{\rho}_{i} - \widetilde{\rho}_{i-1}}{\Delta x} \Bigg).
\end{equation}
The minmod limiter in \eqref{minmod} has the following definition
\begin{equation}
\mbox{minmod}(a_{1}, a_{2},~\dots) :=
\begin{cases}
\min (a_{1}, a_{2},~\dots), \quad \mbox{if} ~ a_{i} > 0\quad \forall i
\\
\max (a_{1}, a_{2},~\dots), \quad \mbox{if} ~ a_{i} < 0\quad \forall i
\\
0, \qquad\qquad\qquad\mbox{otherwise.}
\end{cases}
\end{equation}
We have the same as the above expresions for $\eta$. Finally, we integrate the semi-discrete scheme \eqref{volume ode1} numerically using the third-order strong preserving Runge-Kutta (SSP-RK) ODE solver used in \cite{gott}.

 \begin{figure}[htbp]
\begin{center}
\begin{multicols}{2}
    \includegraphics[width=8cm,height=5cm]{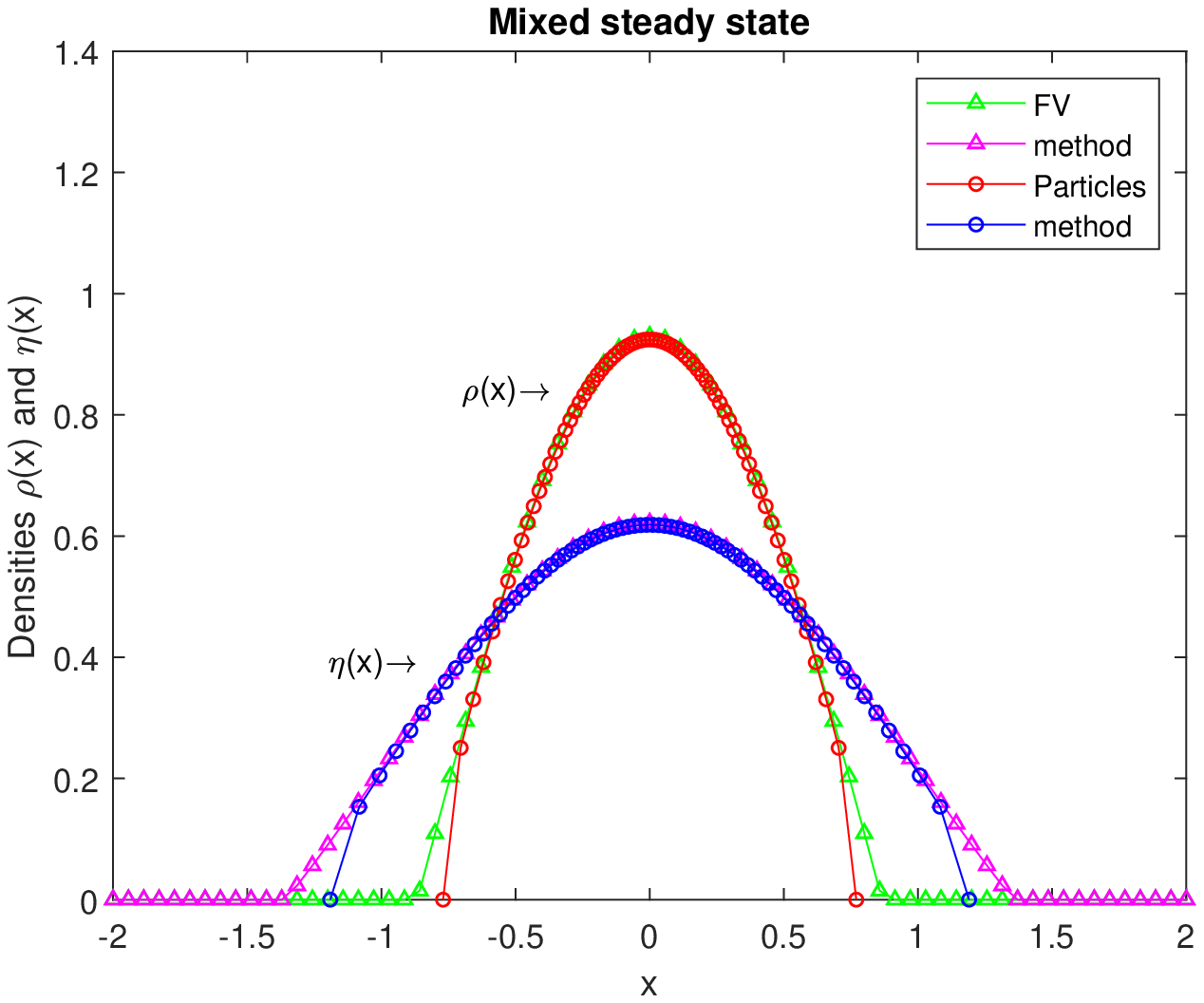}\par 
    \includegraphics[width=8cm,height=5cm]{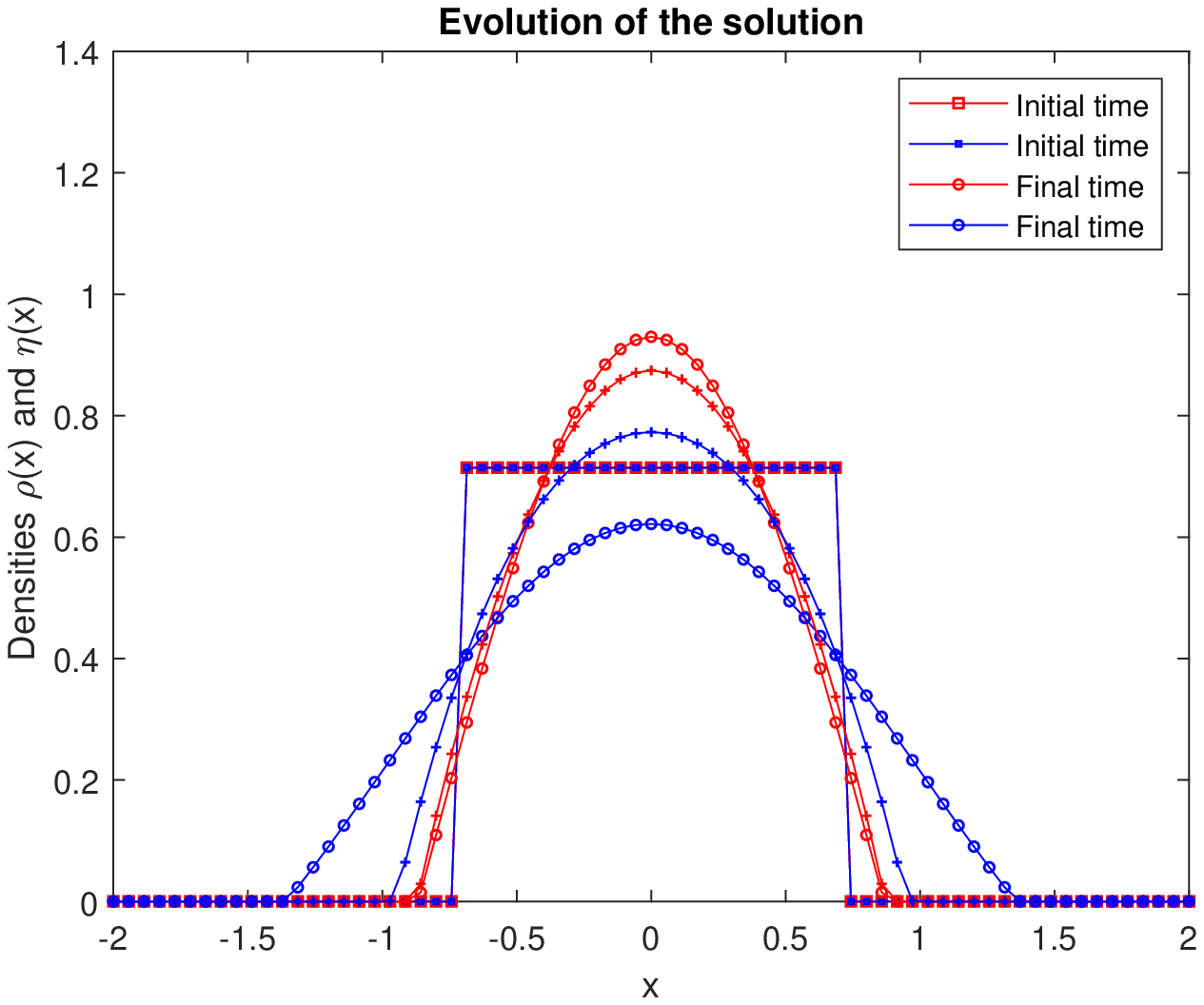}\par 
\end{multicols}
\begin{multicols}{2}
    \includegraphics[width=8cm,height=5cm]{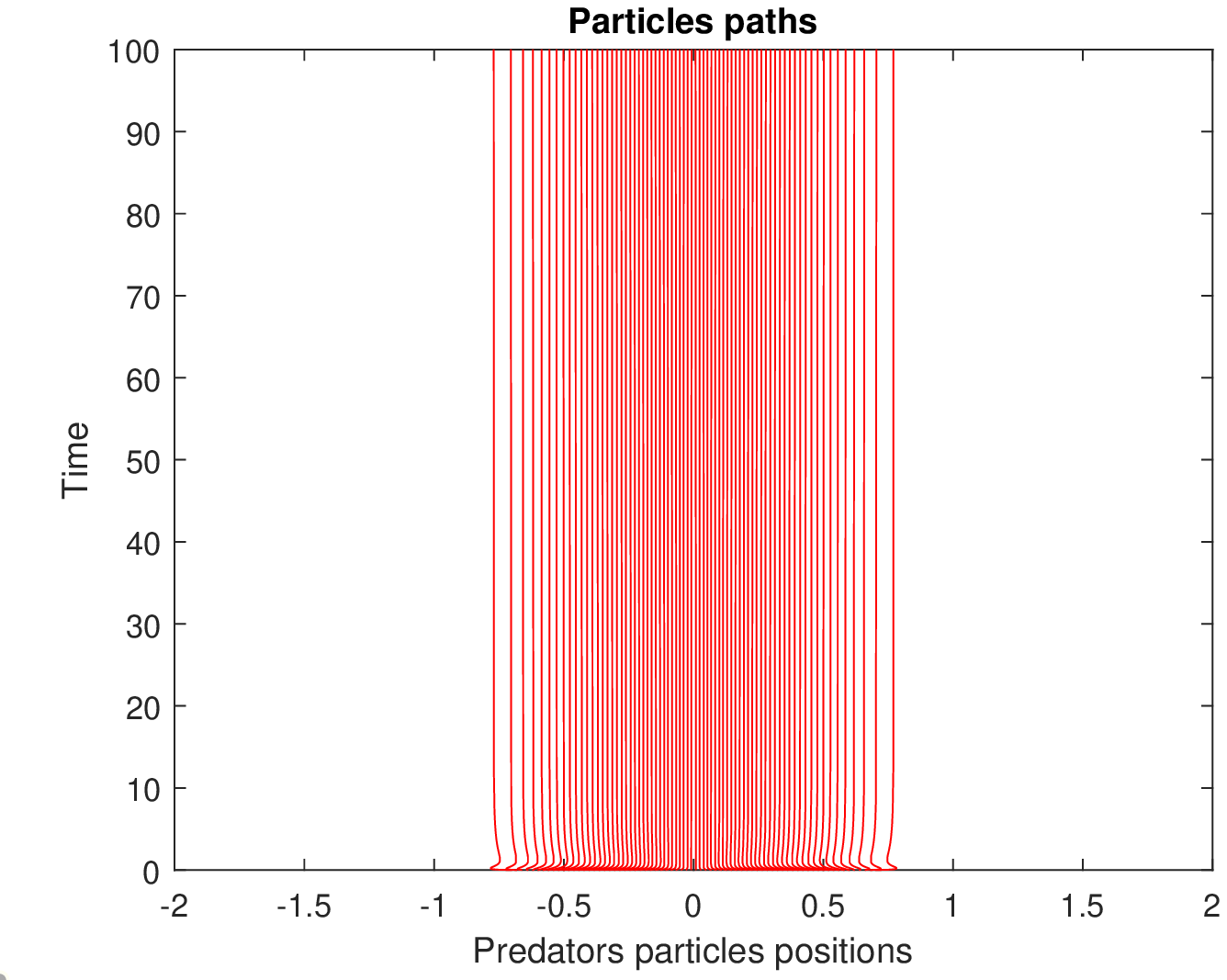}\par
    \includegraphics[width=8cm,height=5cm]{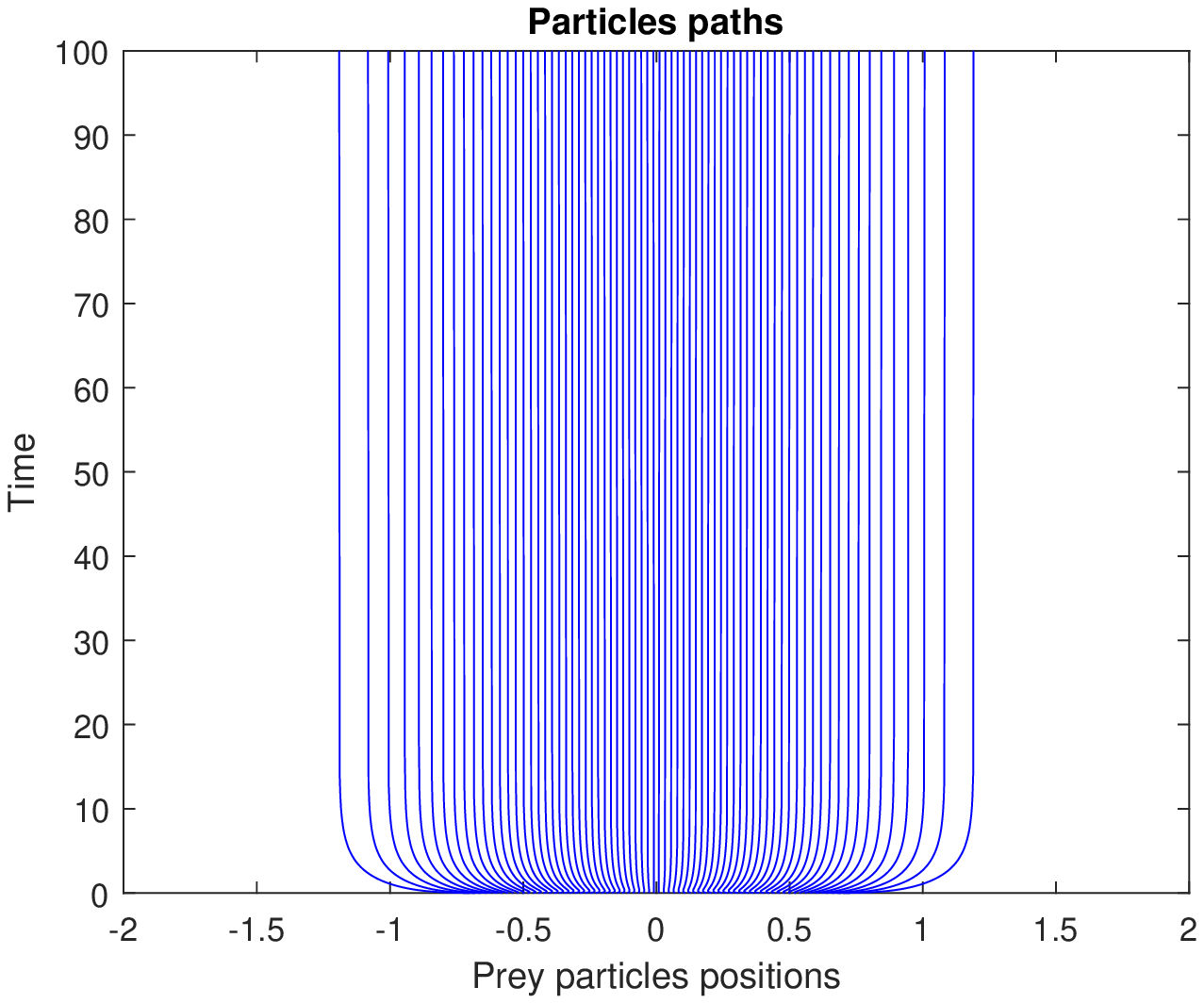}\par
\end{multicols}
\begin{multicols}{2}
\includegraphics[width=8cm,height=5cm]{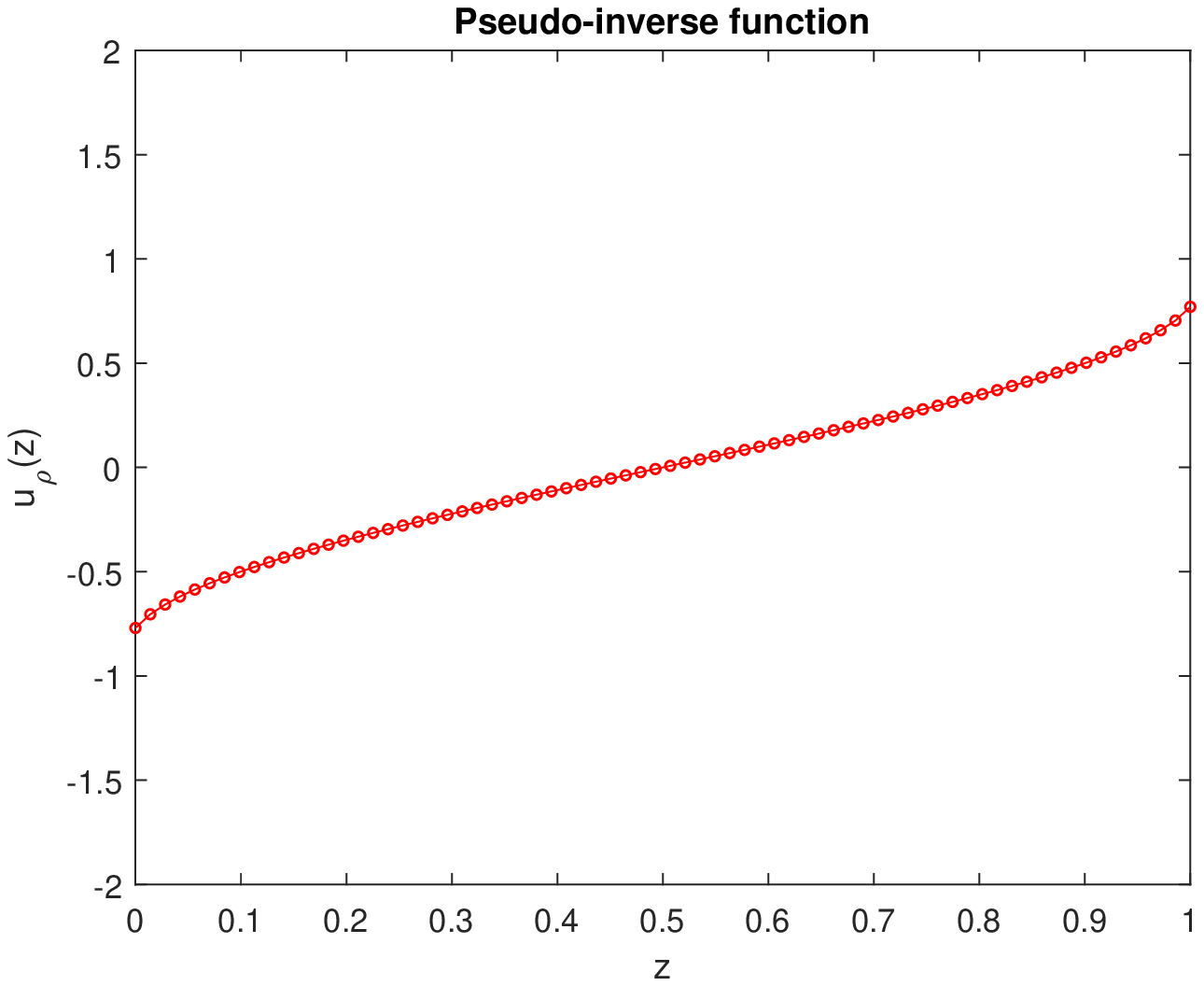}\par
   \includegraphics[width=8cm,height=5cm]{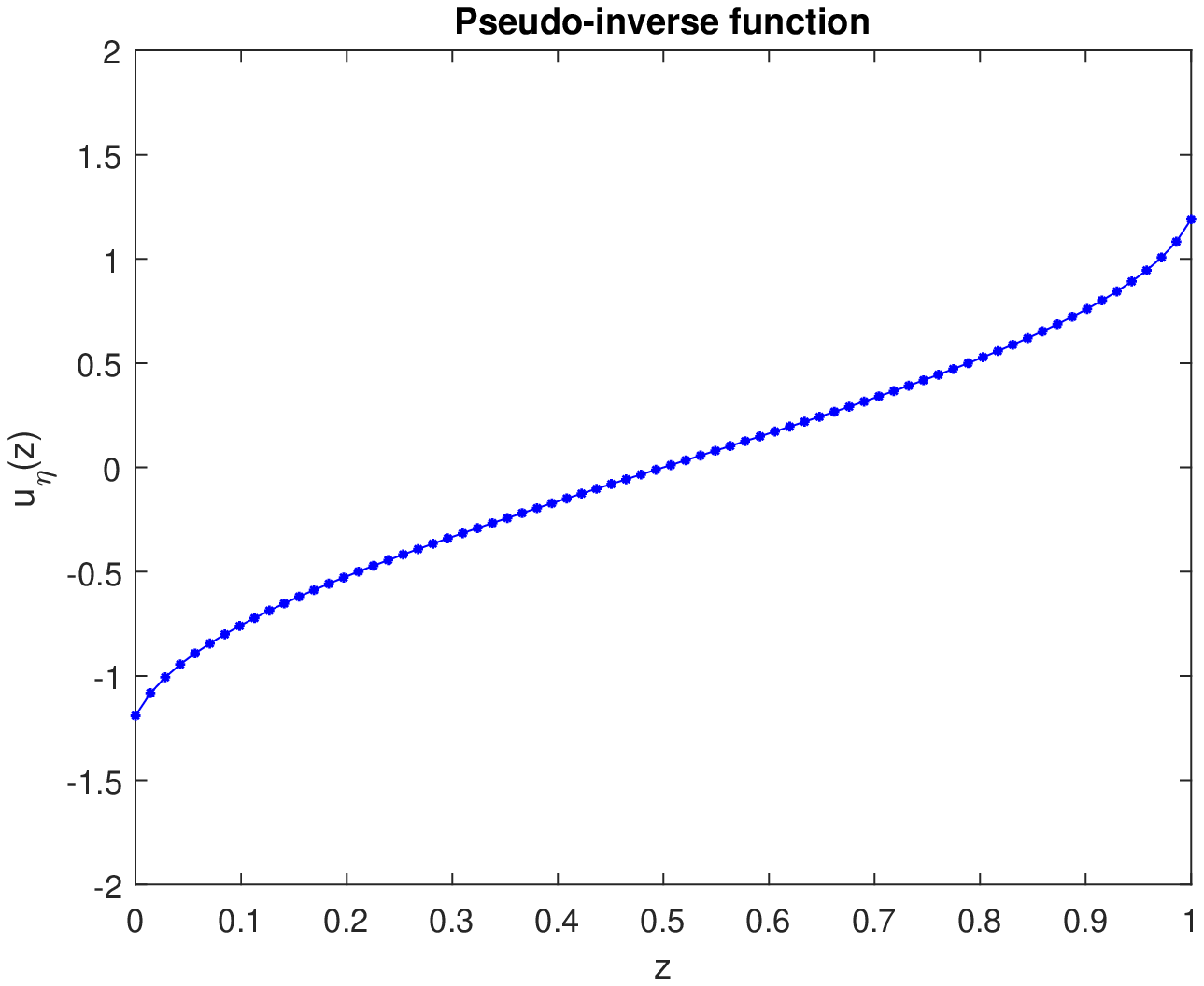}\par
\end{multicols}

\caption{
In this figure, a mixed steady state is plotted by using initial data given by \eqref{initial1}, $\alpha=0.1$, $d=0.4$. Number of particles are chosen equal to number of cells in the finite volume method, which is $N=71$. 
}
\label{Mixed SS}
\end{center}
\end{figure}

\begin{figure}[htbp]
\begin{center}
\begin{multicols}{2}
    \includegraphics[width=8cm,height=5cm]{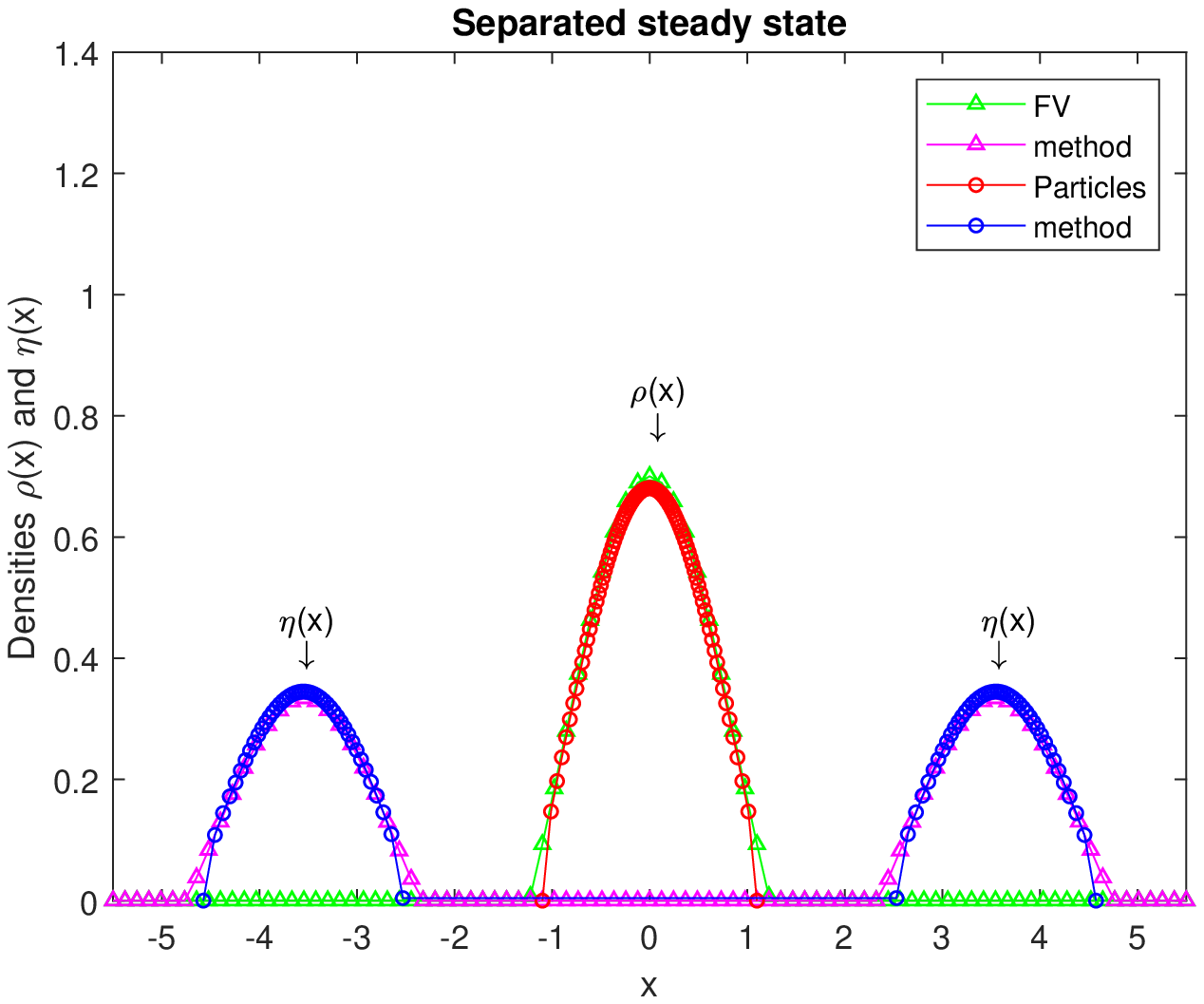}\par 
    \includegraphics[width=8cm,height=5cm]{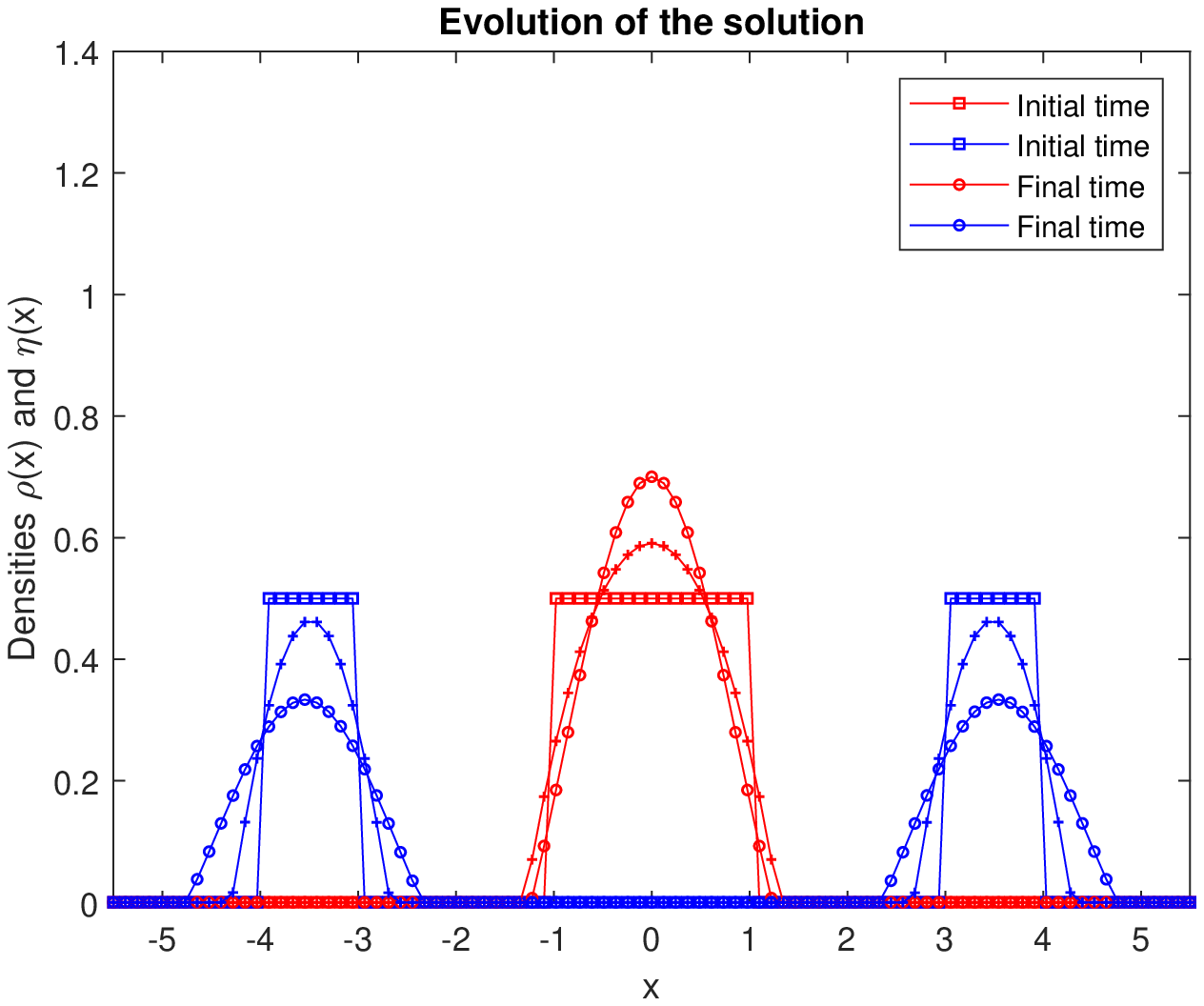}\par 
\end{multicols}
\begin{multicols}{2}
    \includegraphics[width=8cm,height=5cm]{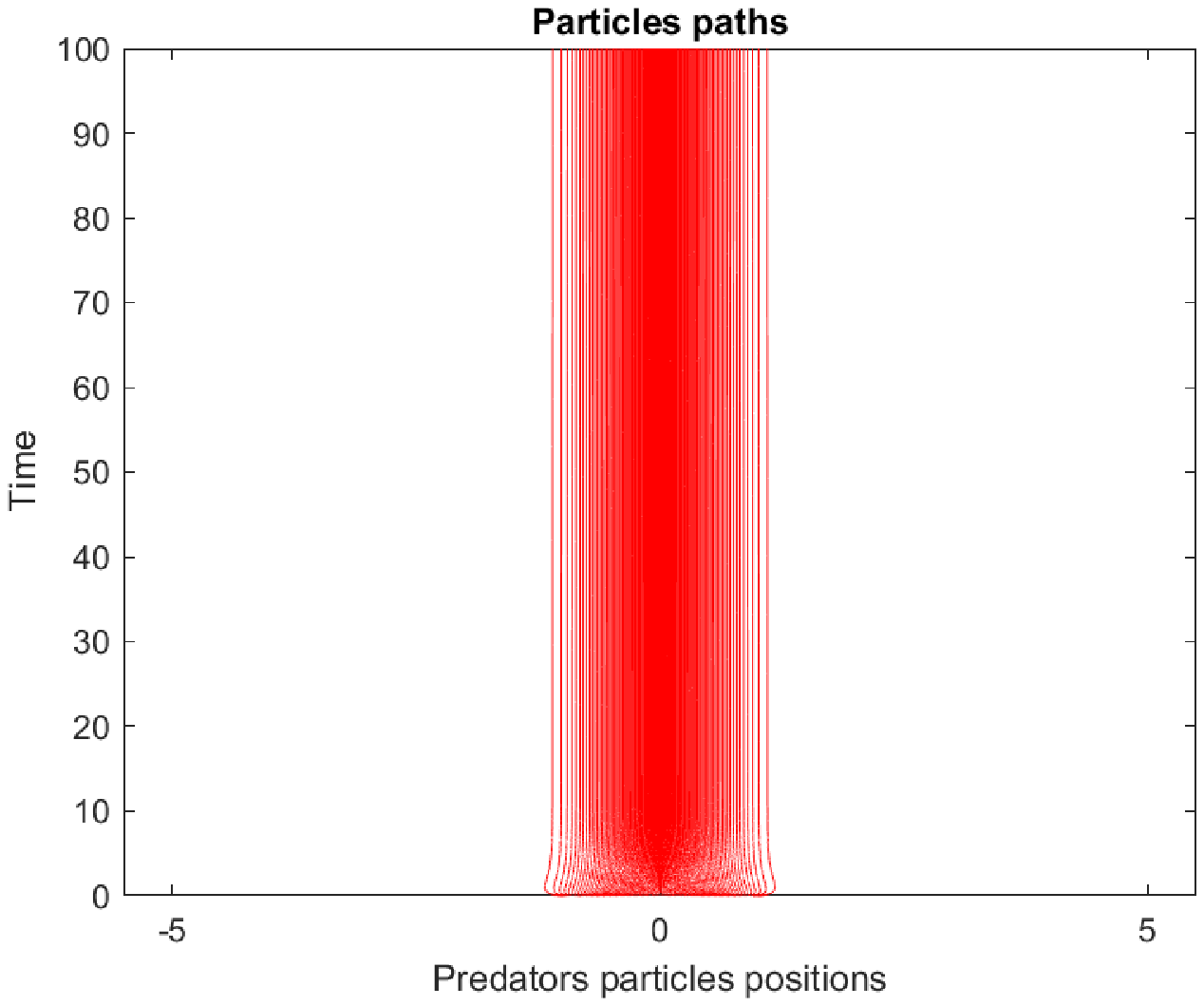}\par
    \includegraphics[width=8cm,height=5cm]{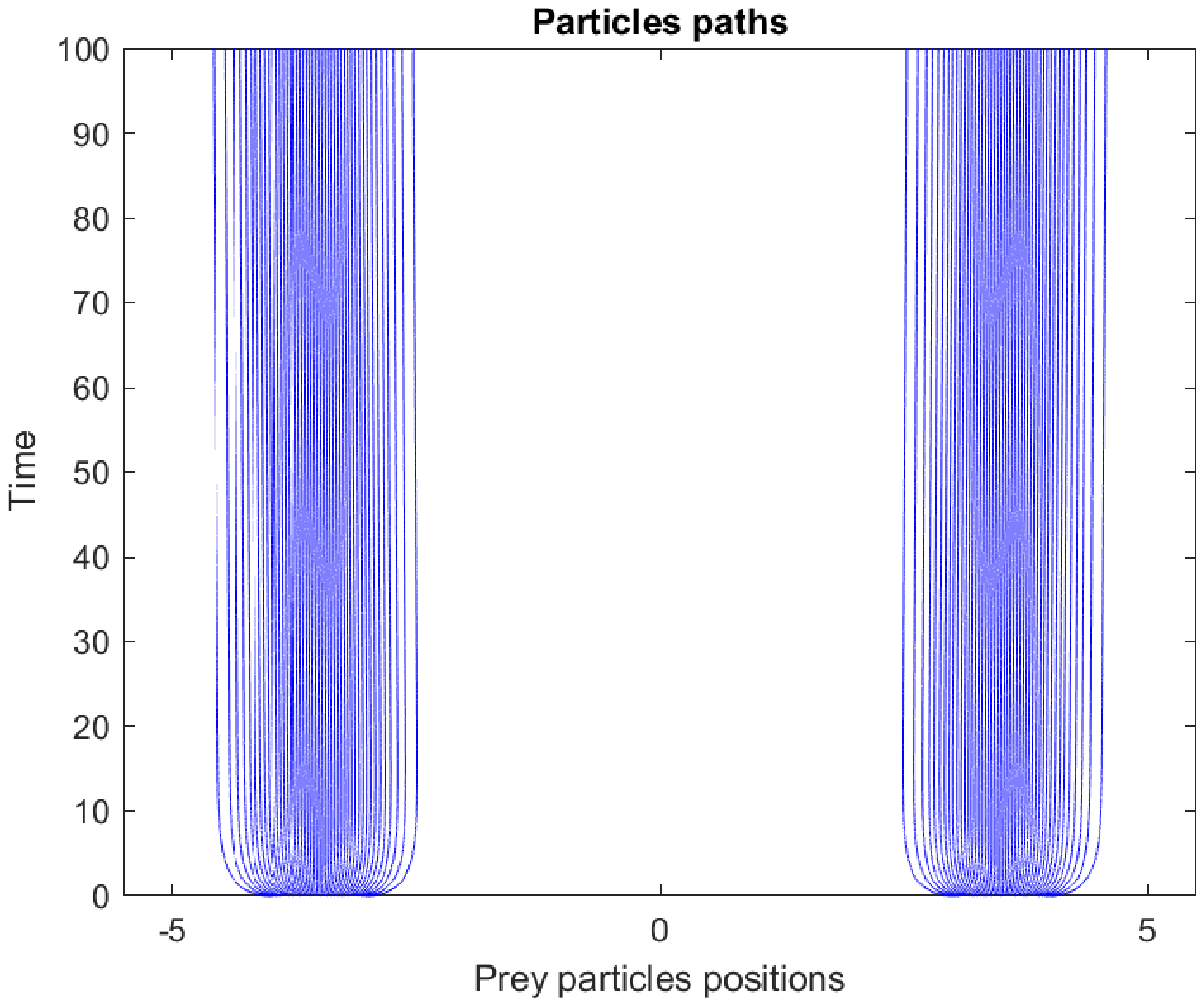}\par
\end{multicols}
\begin{multicols}{2}
\includegraphics[width=8cm,height=5cm]{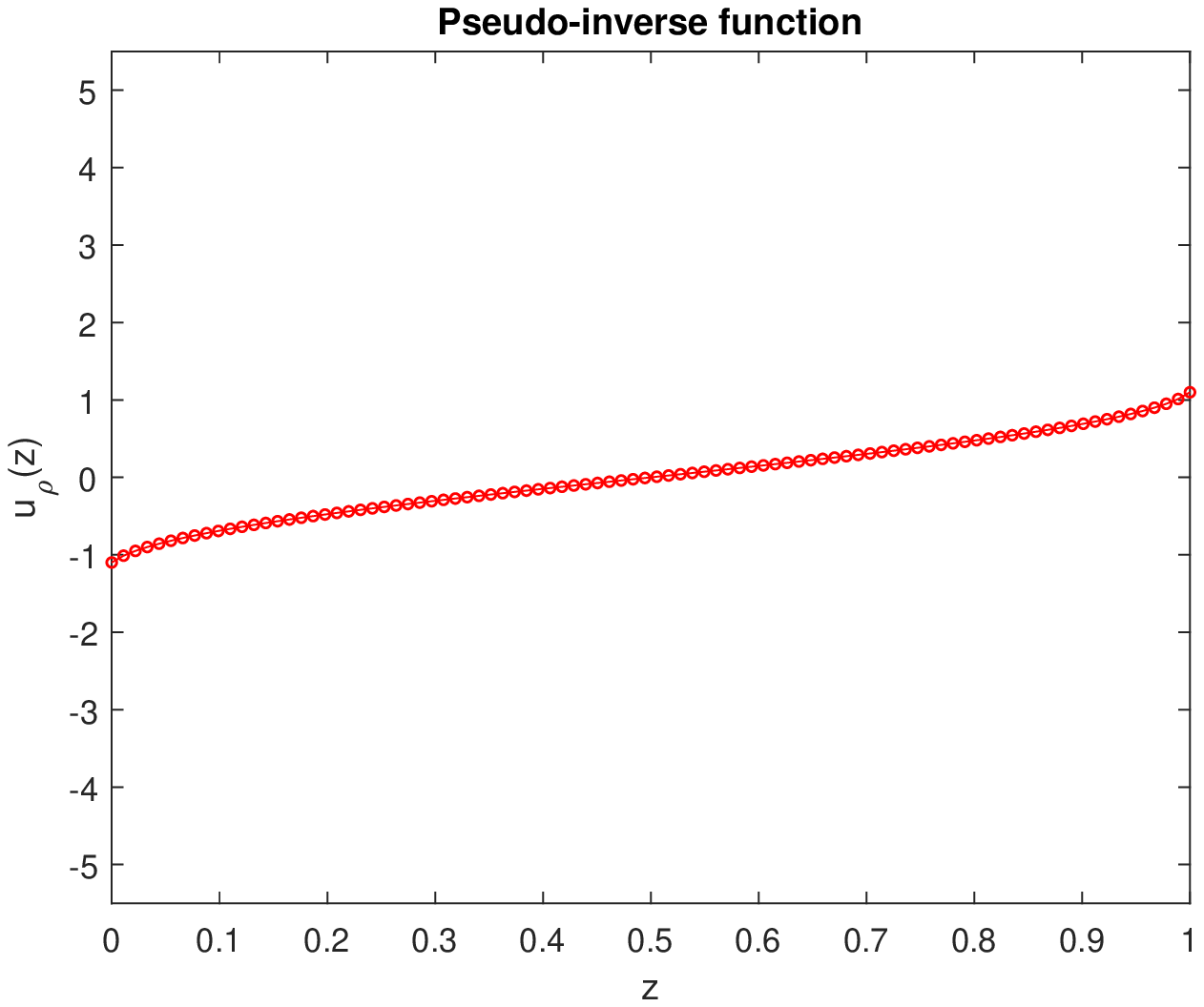}\par
   \includegraphics[width=8cm,height=5cm]{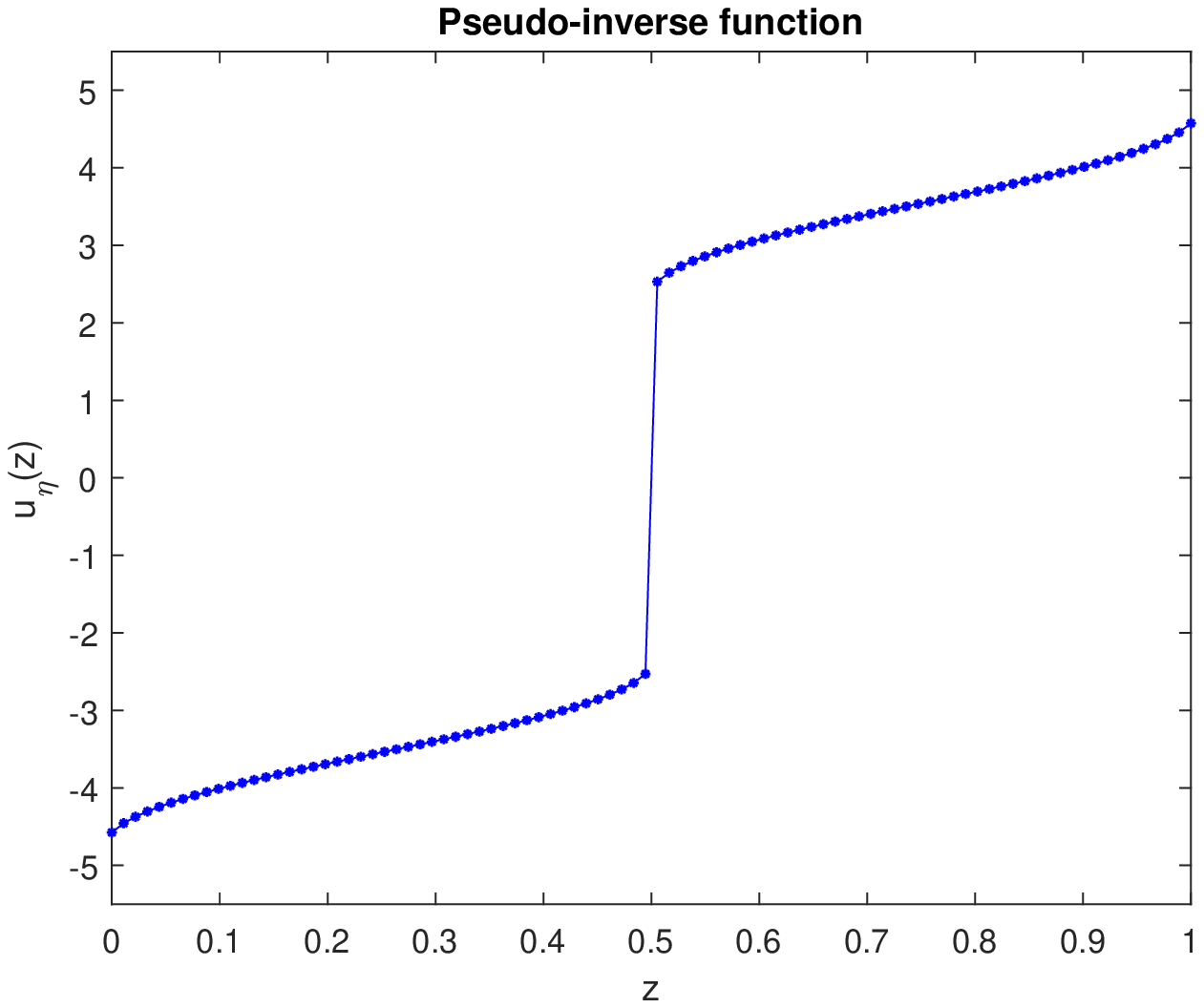}\par
\end{multicols}

\caption{A separated steady state is presented in this figure. Initial data are given by \eqref{initial2}. The parameters are $\alpha=0.2$ and $d=0.4$ with $N=91$}
\label{Separated SS}
\end{center}
\end{figure}

\begin{figure}[htbp]
\begin{center}
\begin{multicols}{2}
    \includegraphics[width=8cm,height=5cm]{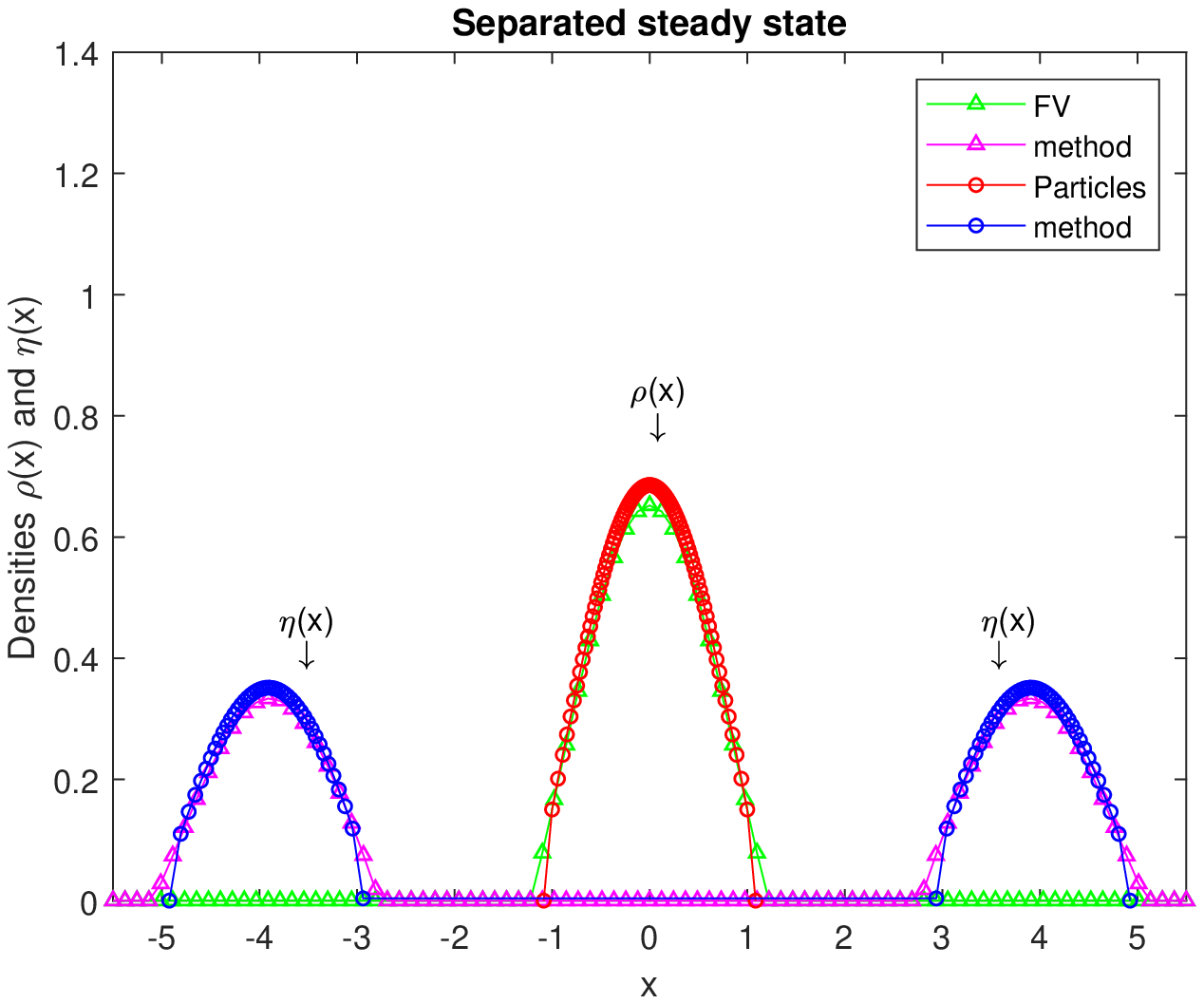}\par 
    \includegraphics[width=8cm,height=5cm]{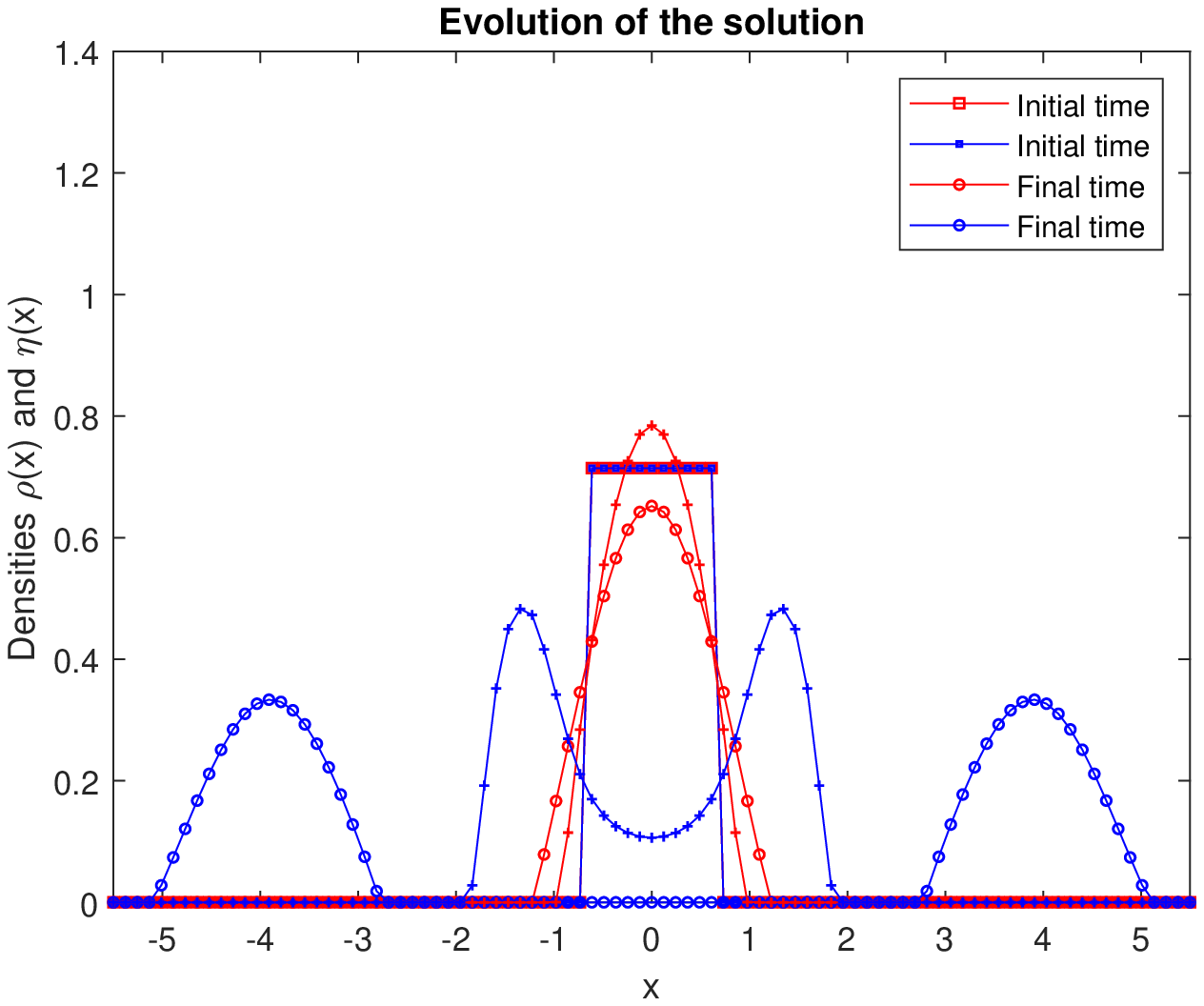}\par 
\end{multicols}
\begin{multicols}{2}
    \includegraphics[width=8cm,height=5cm]{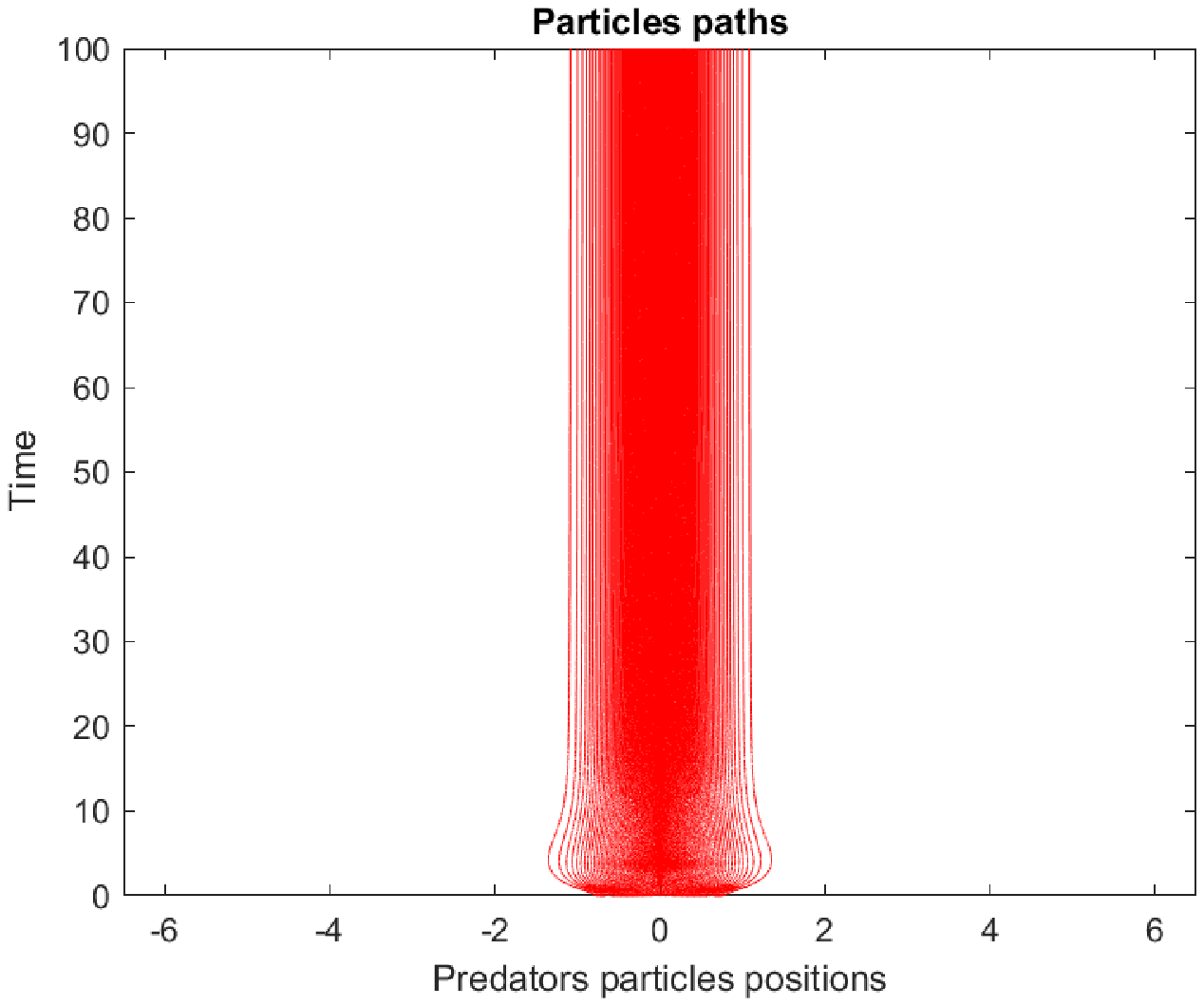}\par
    \includegraphics[width=8cm,height=5cm]{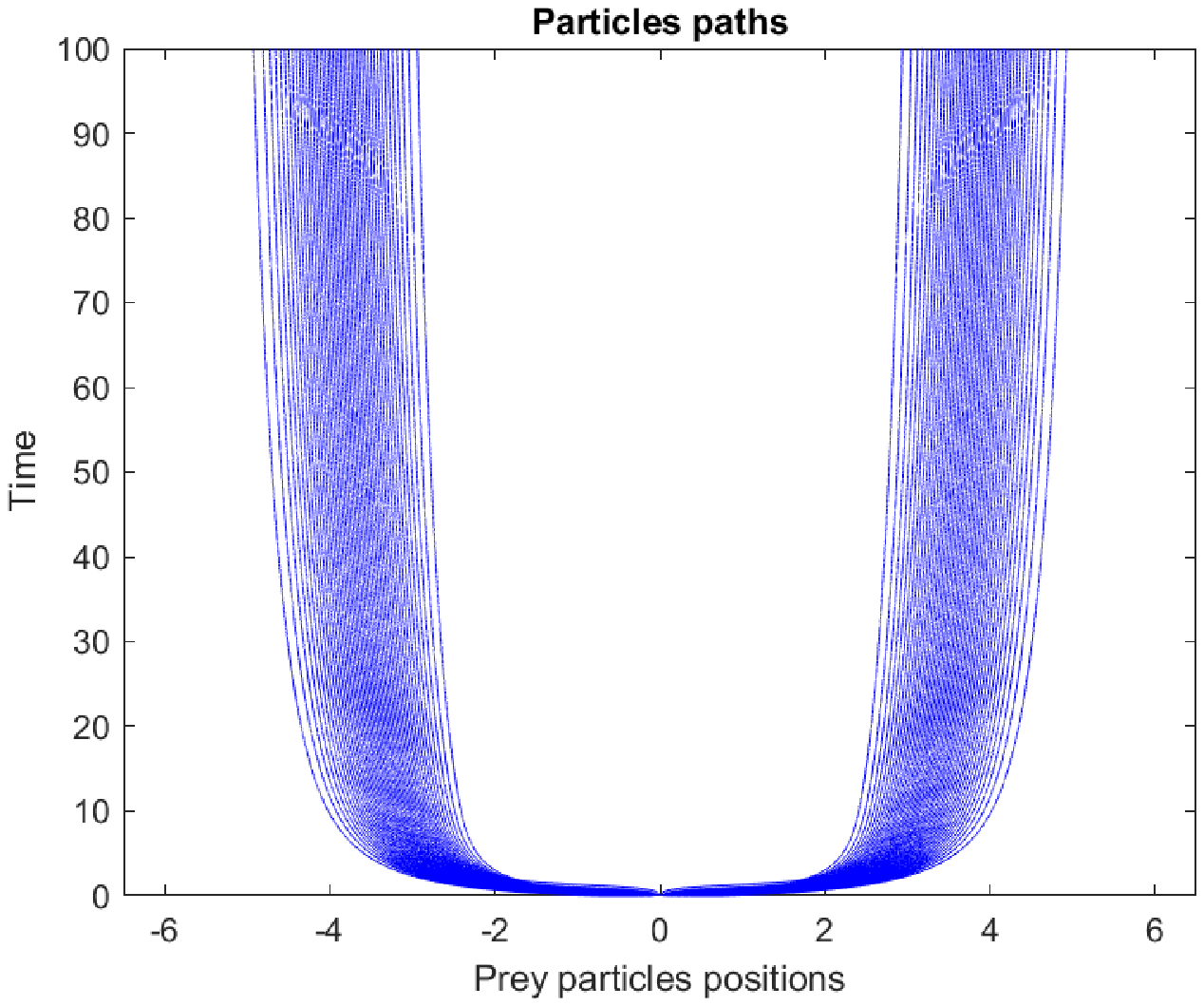}\par
\end{multicols}
\begin{multicols}{2}
    \includegraphics[width=8cm,height=5cm]{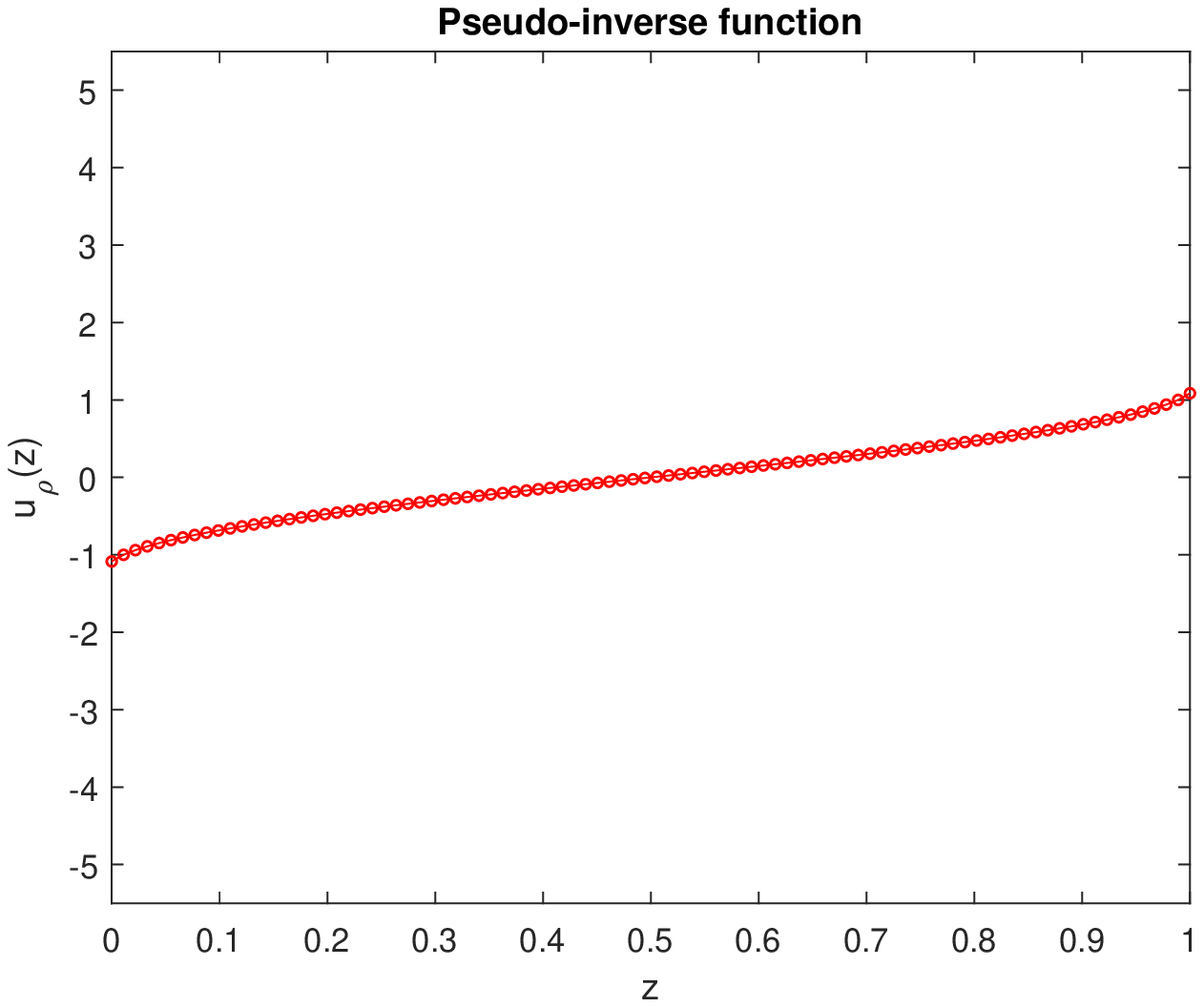}\par
    \includegraphics[width=8cm,height=5cm]{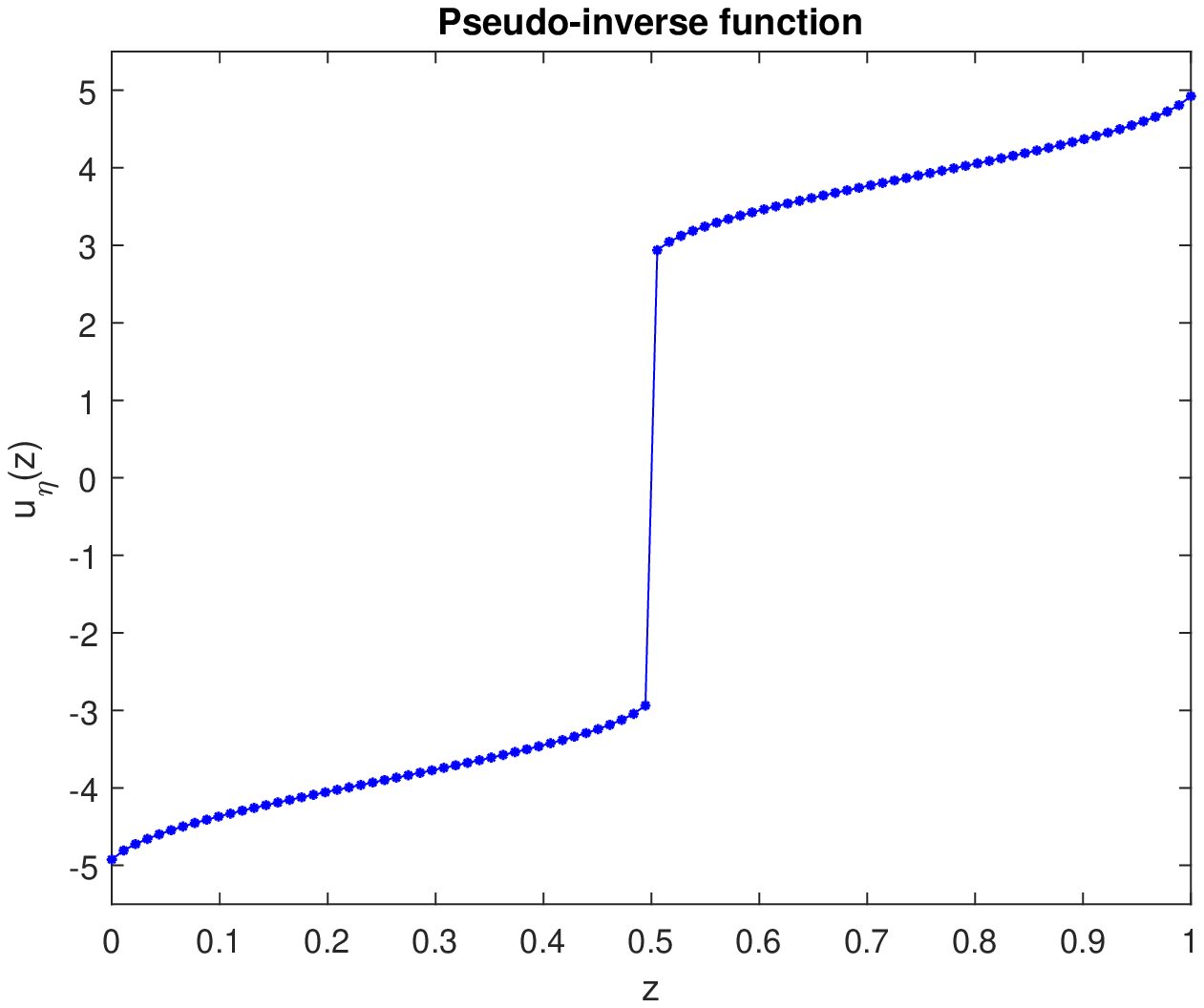}\par
\end{multicols}

\caption{This figure shows how from the initial densities $\rho_0,\eta_0$ and  $d$, as in Figure \ref{Mixed SS}, a transition between mixed and a sort of separated  steady state appears by choosing the value of $\alpha=6$. This large value of $\alpha$ suggests an \emph{unstable} behaviour in the profile, see Remark \ref{rem:conditions}.}
\label{From 1st to 2nd SS}
\end{center}
\end{figure}

\begin{figure}[htbp]
\begin{center}
\begin{multicols}{2}
    \includegraphics[width=8cm,height=5cm]{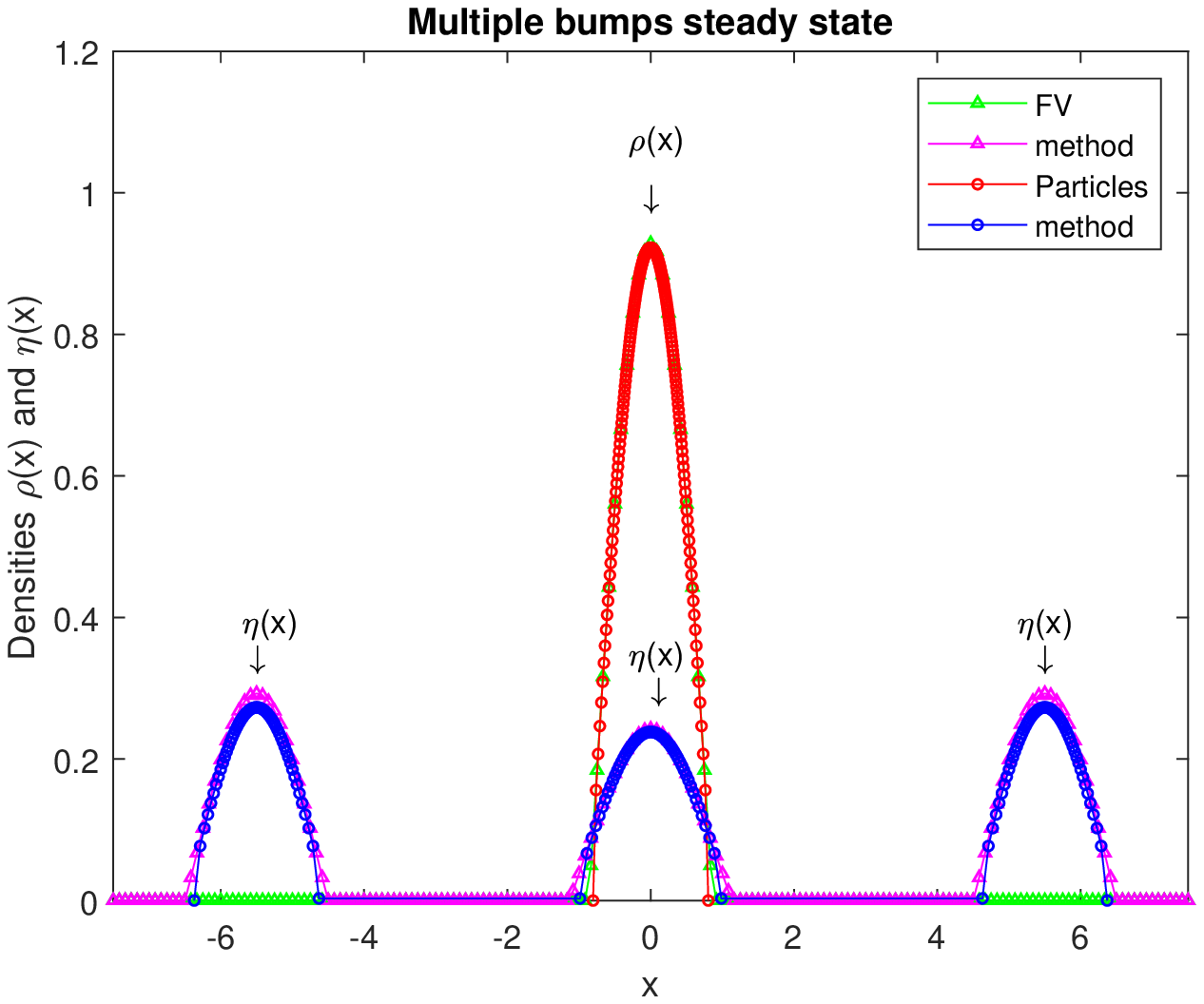}\par 
    \includegraphics[width=8cm,height=5cm]{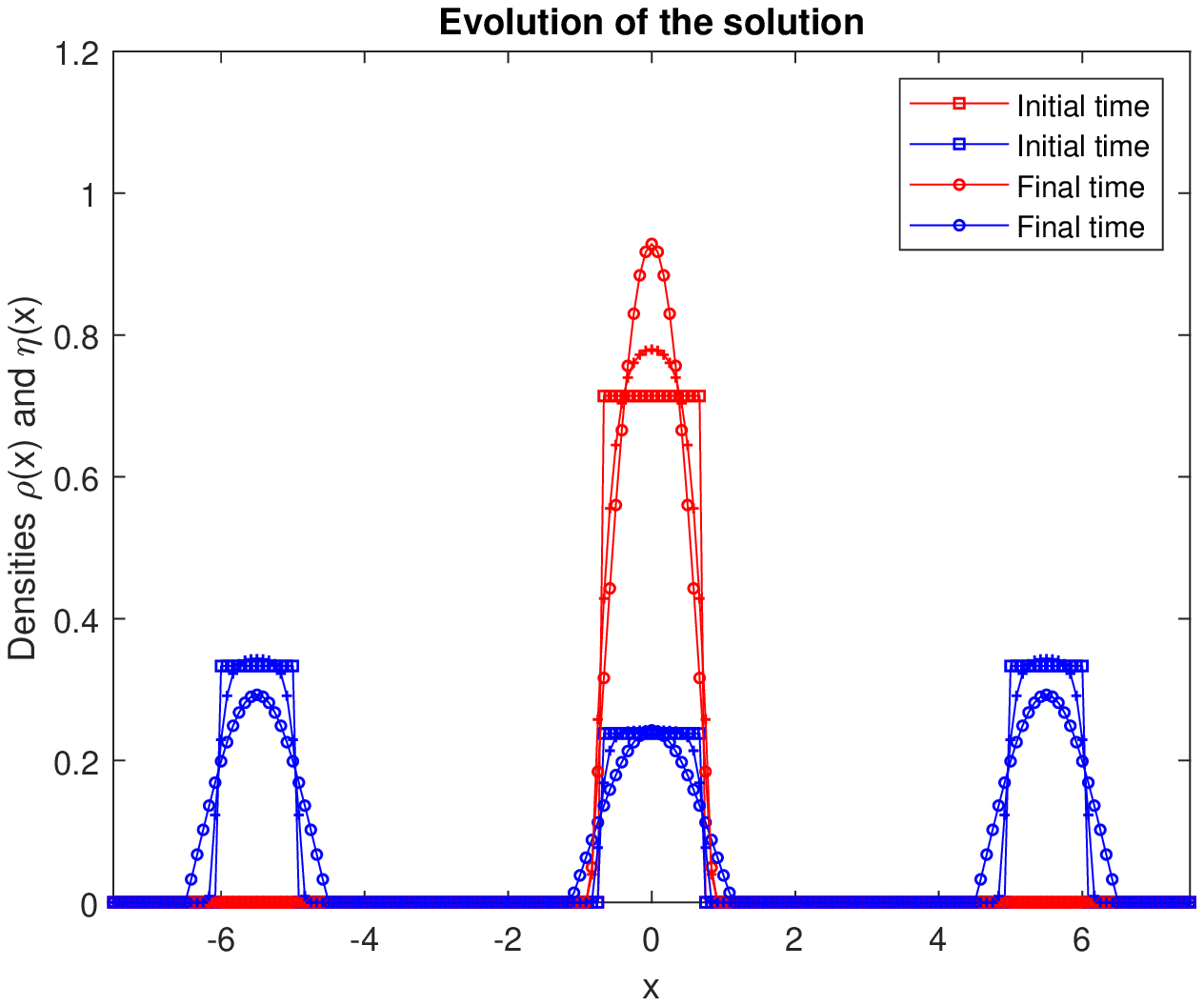}\par 
\end{multicols}
\begin{multicols}{2}
    \includegraphics[width=8cm,height=5cm]{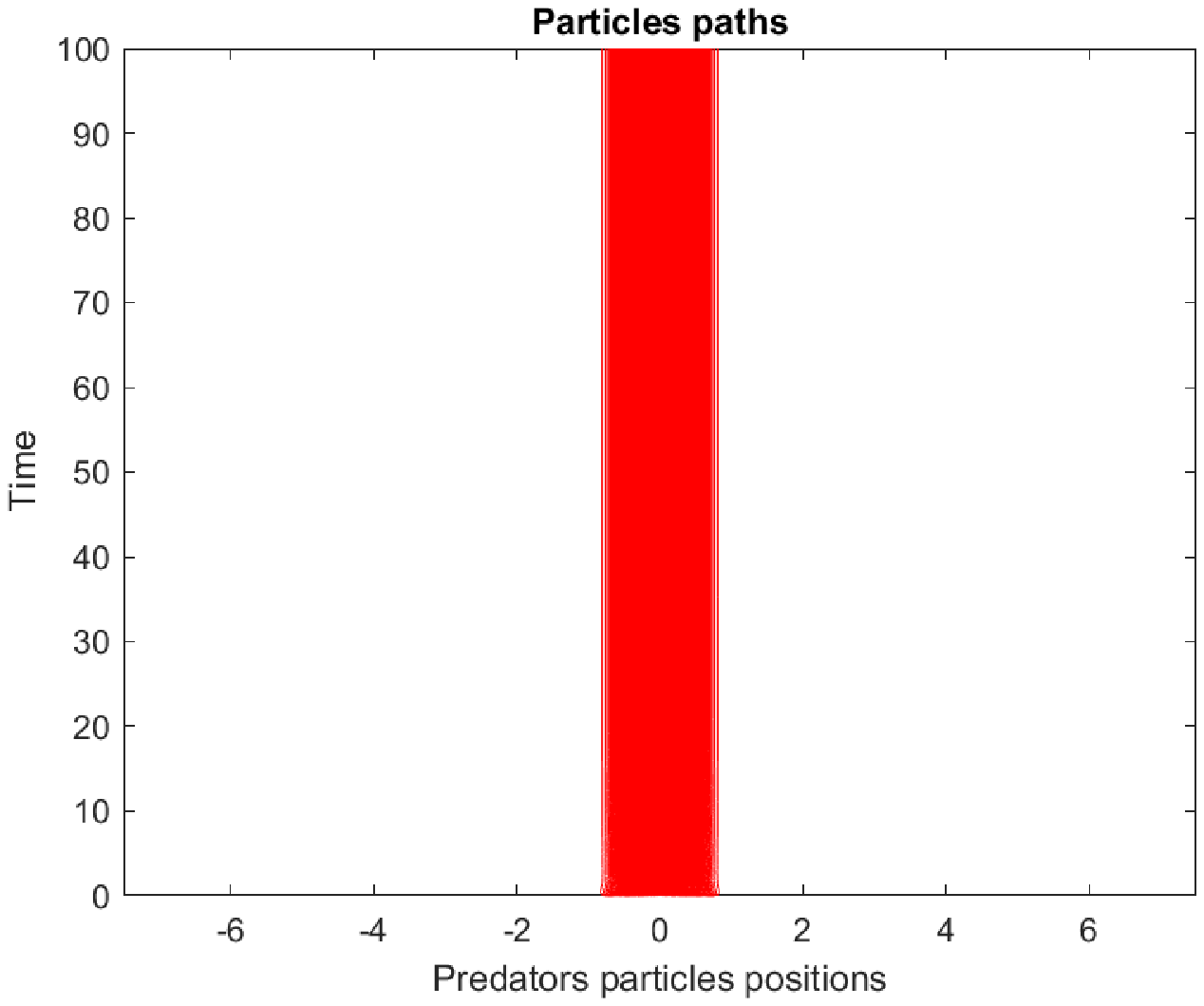}\par
    \includegraphics[width=8cm,height=5cm]{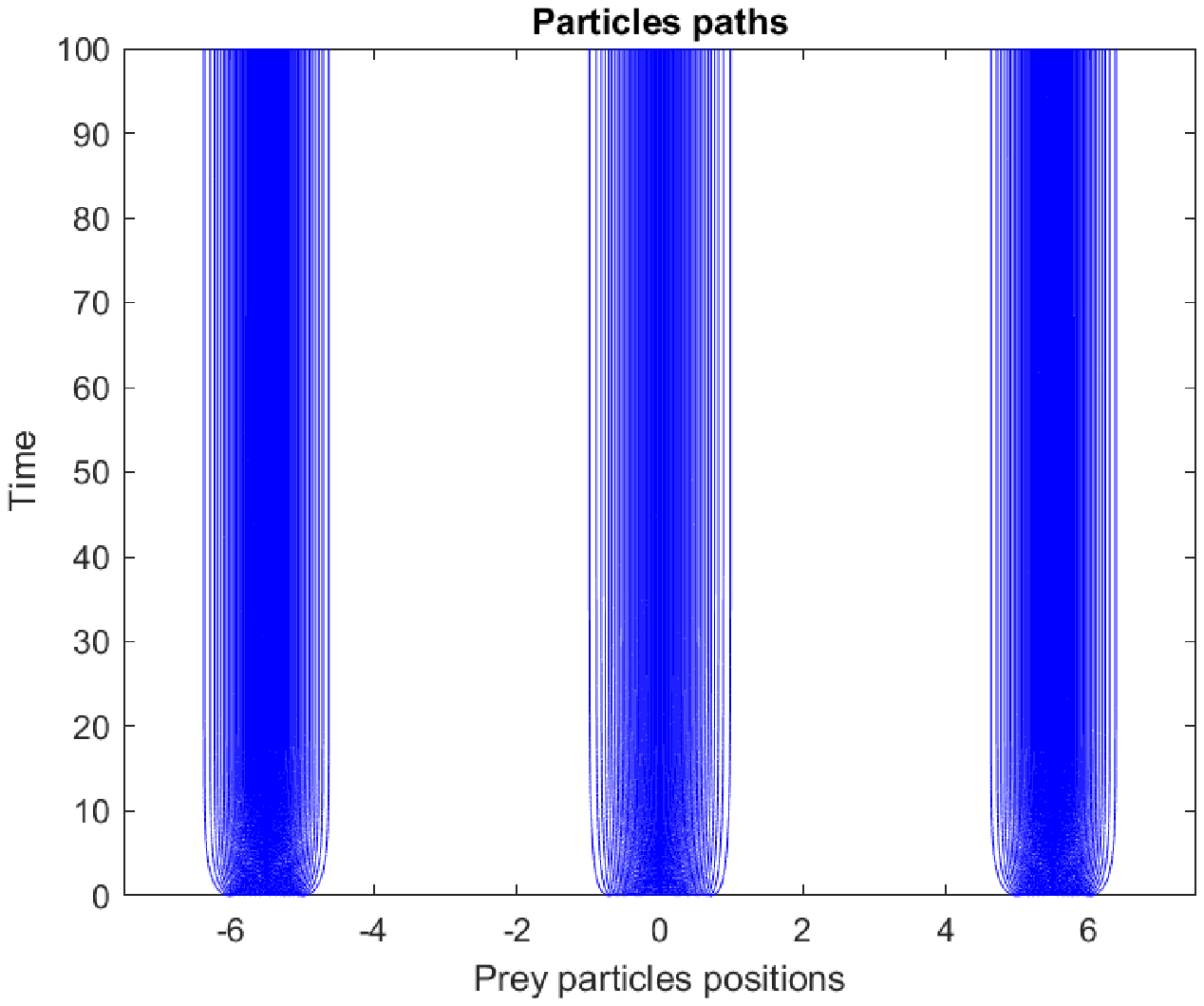}\par
\end{multicols}
\begin{multicols}{2}
\includegraphics[width=8cm,height=5cm]{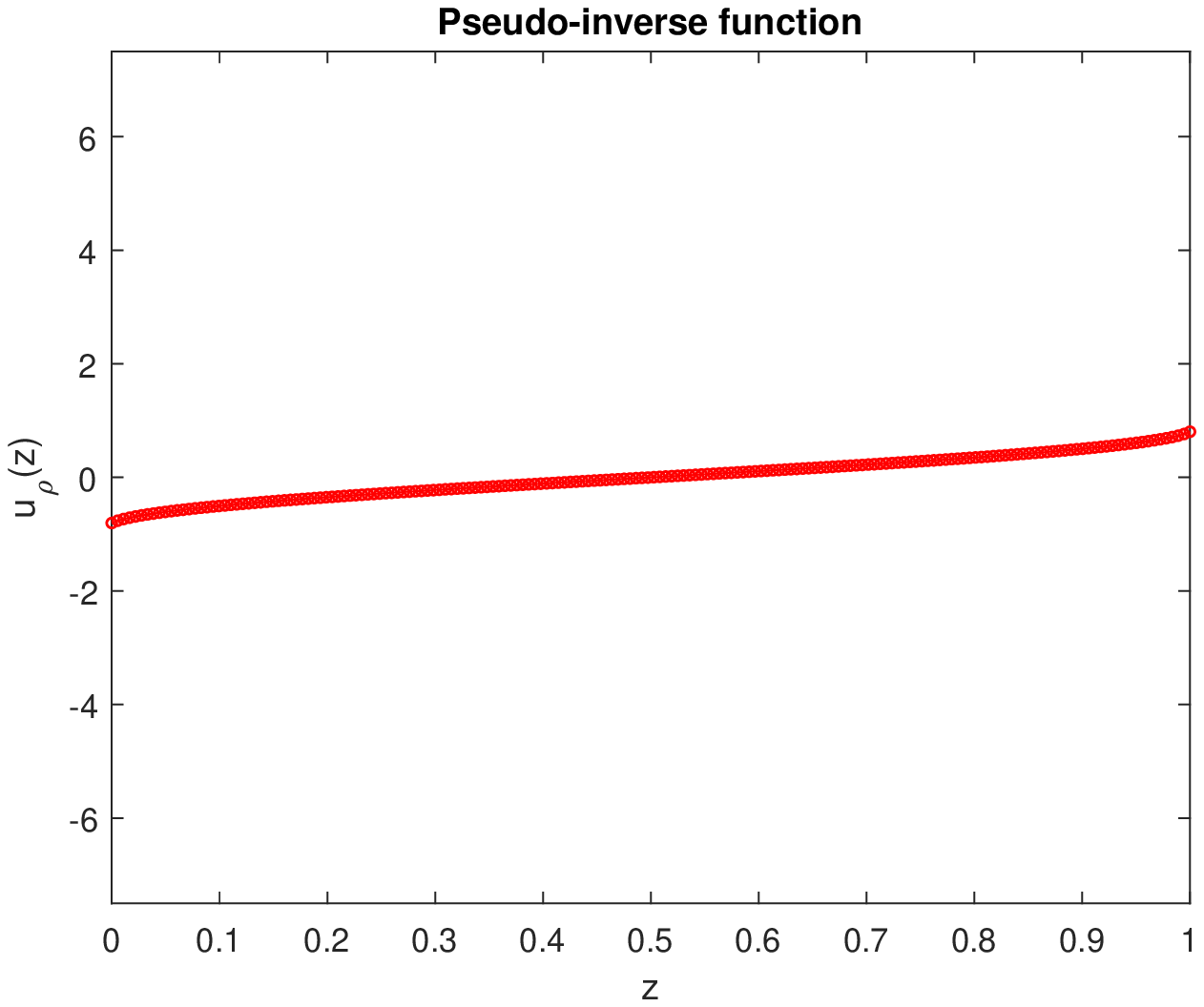}\par
   \includegraphics[width=8cm,height=5cm]{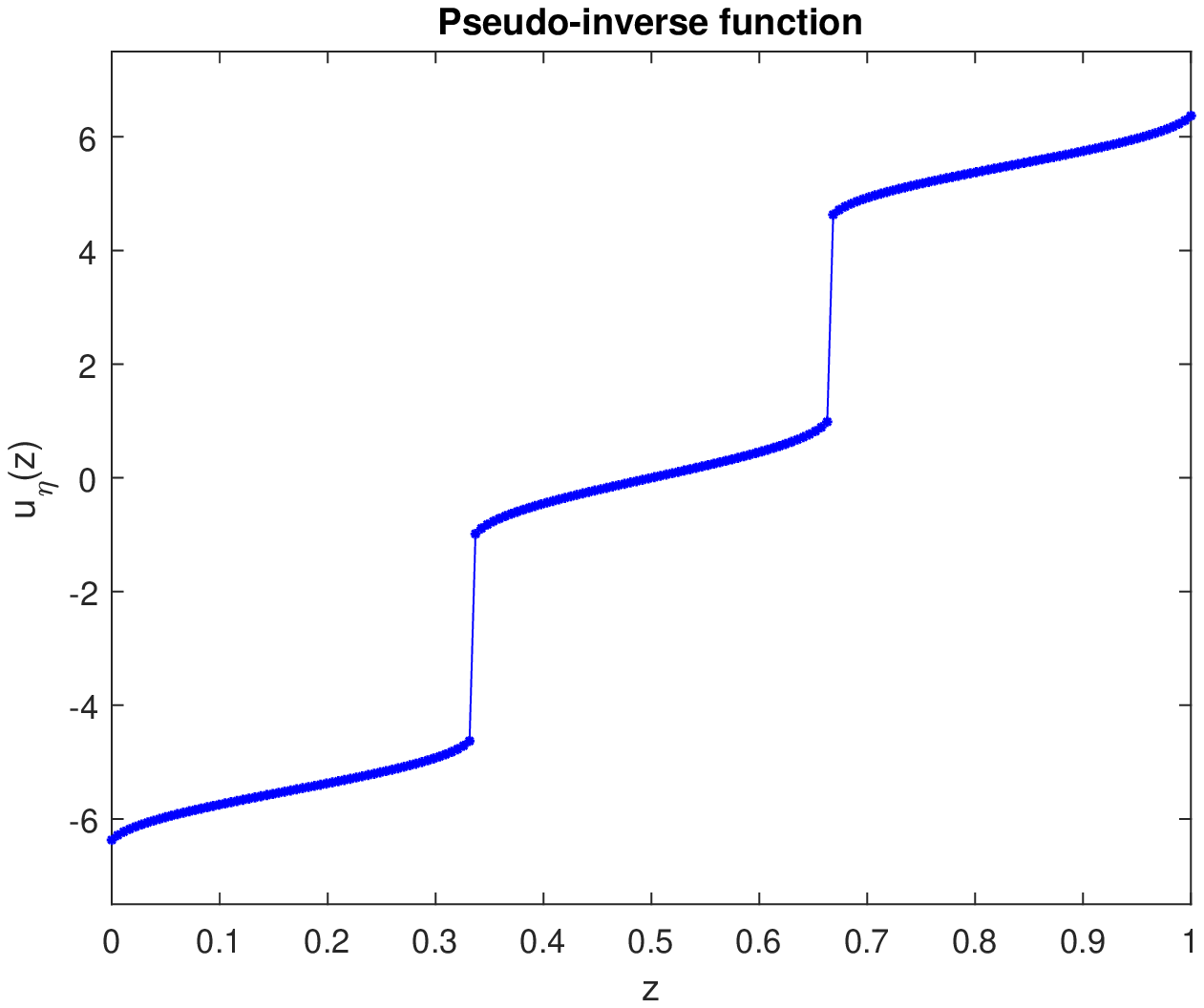}\par
\end{multicols}

\caption{A steady state of four bumps is showed in this figure starting from initial data as in \eqref{initial3} with $\alpha=0.05$ and $d=0.3$. The number of particles $N=181$, which is the same as number of cells. }
\label{Multi SS2}
\end{center}
\end{figure}

\begin{figure}[htbp]
\begin{center}
\begin{multicols}{2}
    \includegraphics[width=8cm,height=5cm]{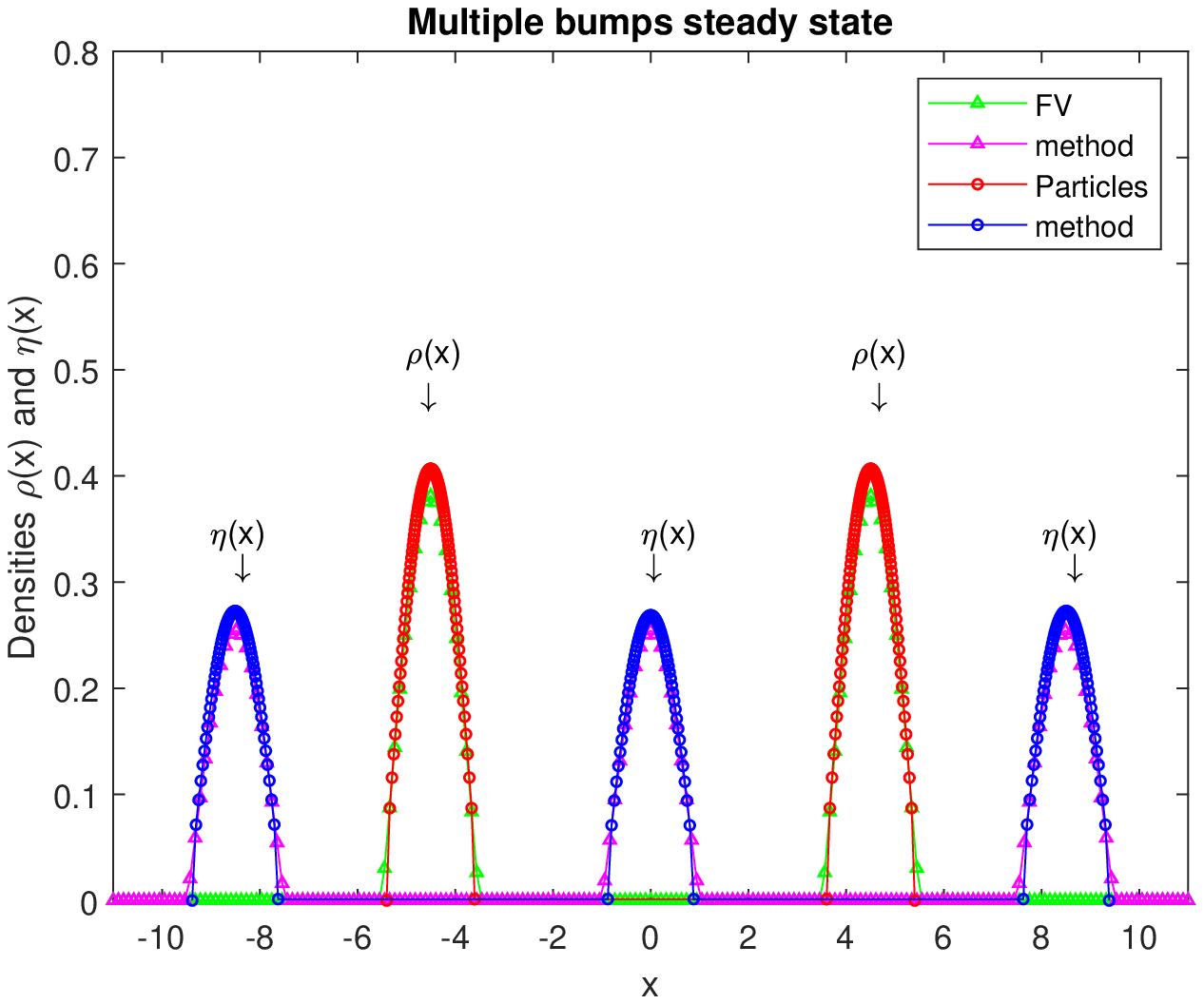}\par 
    \includegraphics[width=8cm,height=5cm]{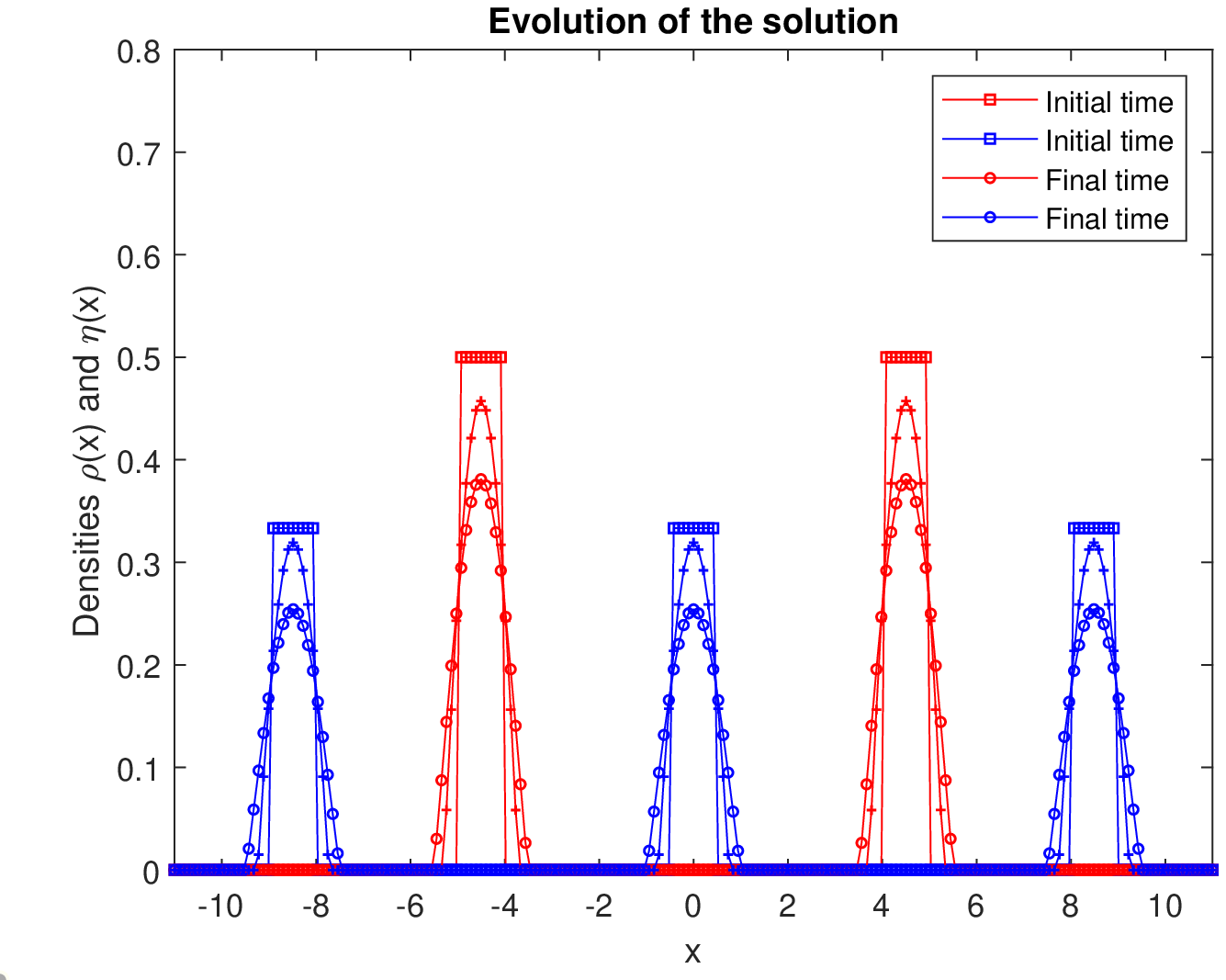}\par 
\end{multicols}
\begin{multicols}{2}
    \includegraphics[width=8cm,height=5cm]{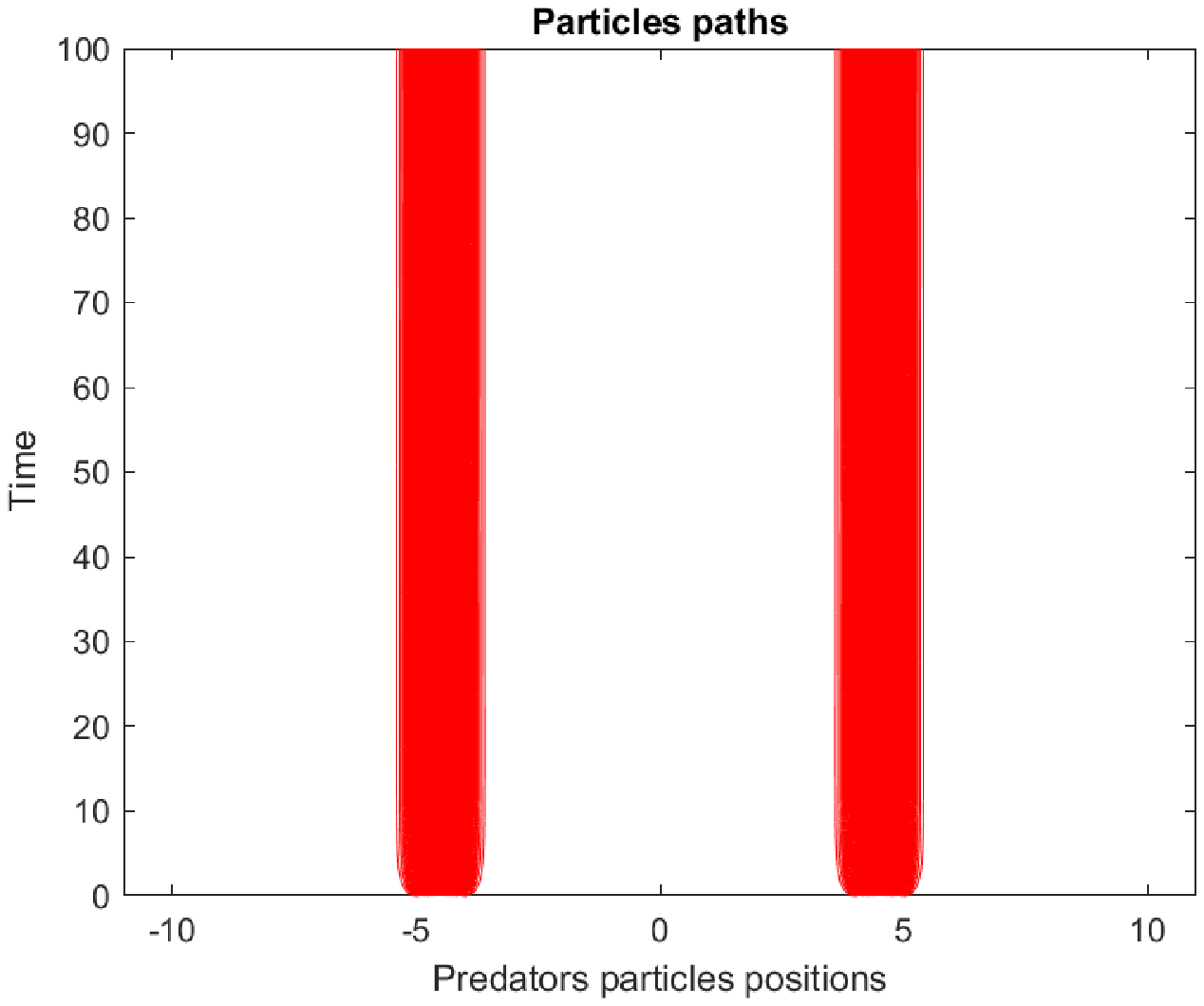}\par
    \includegraphics[width=8cm,height=5cm]{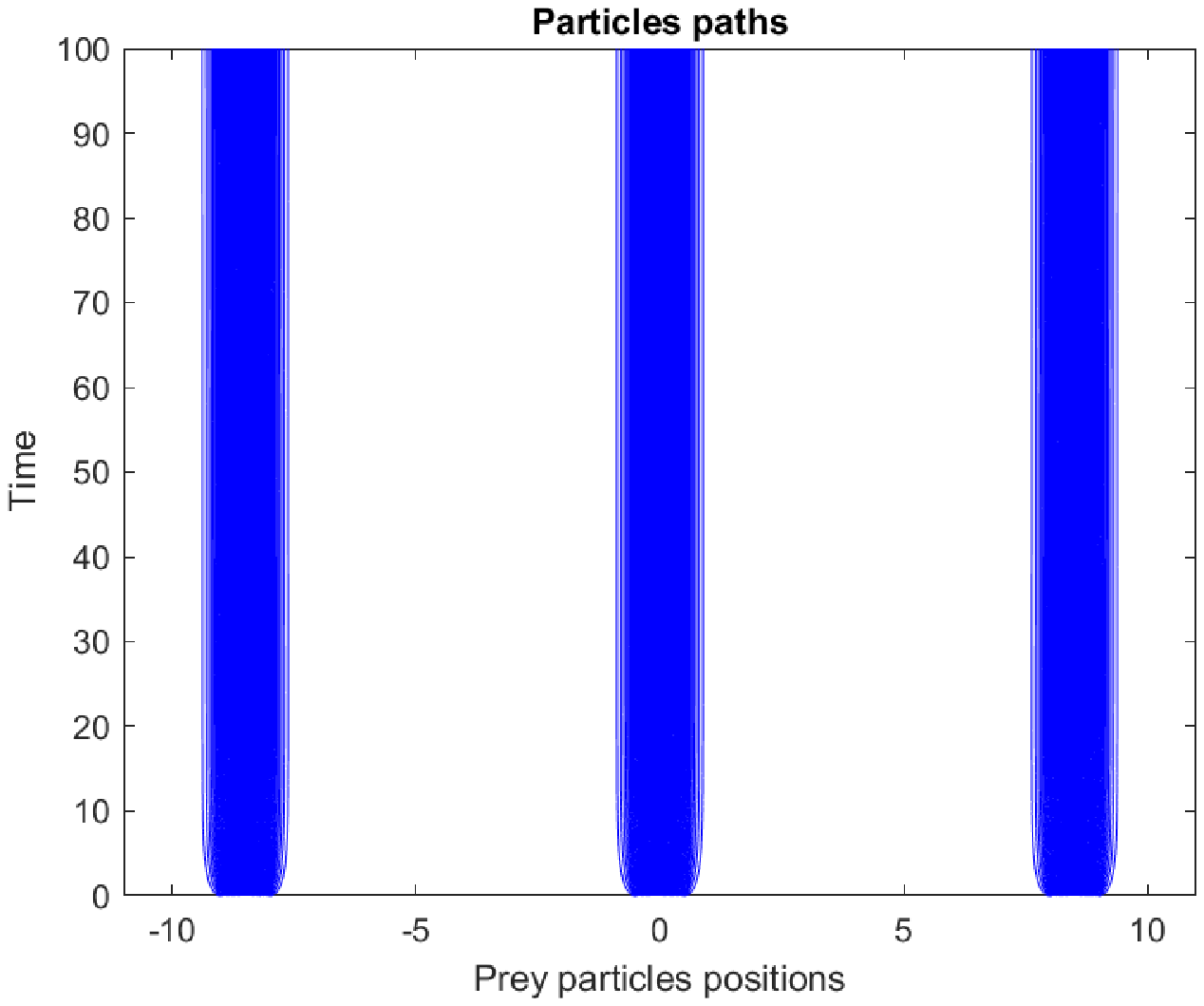}\par
\end{multicols}
\begin{multicols}{2}
    \includegraphics[width=8cm,height=5cm]{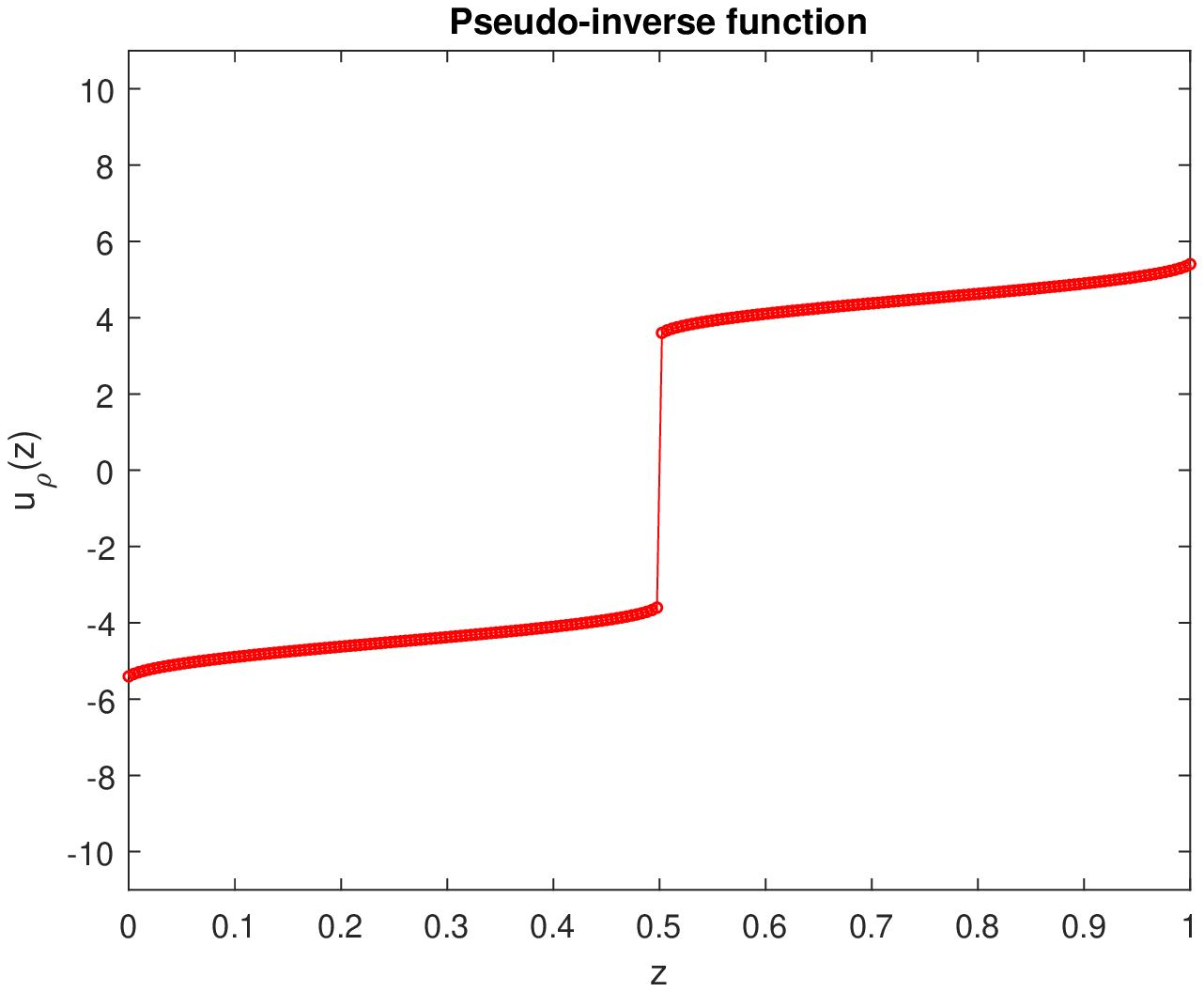}\par
    \includegraphics[width=8cm,height=5cm]{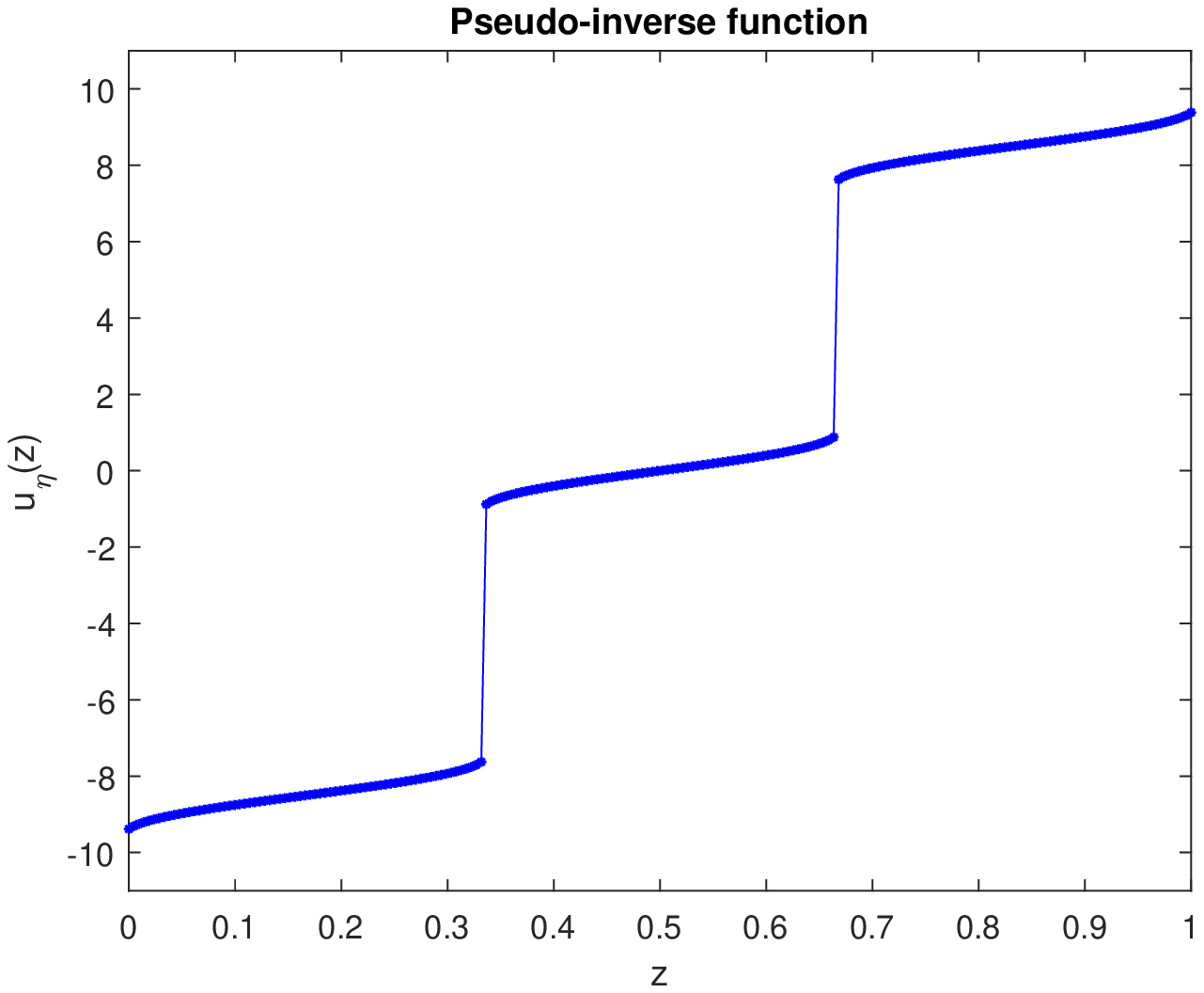}\par
\end{multicols}

\caption{Starting from initial data as in \eqref{initial4} with $\alpha=1$ and $d=0.3$, we get a five bumps steady state.}
\label{Multi SS1}
\end{center}
\end{figure}

\begin{figure}[htbp]
\begin{center}
\includegraphics[width=10cm,height=5cm]{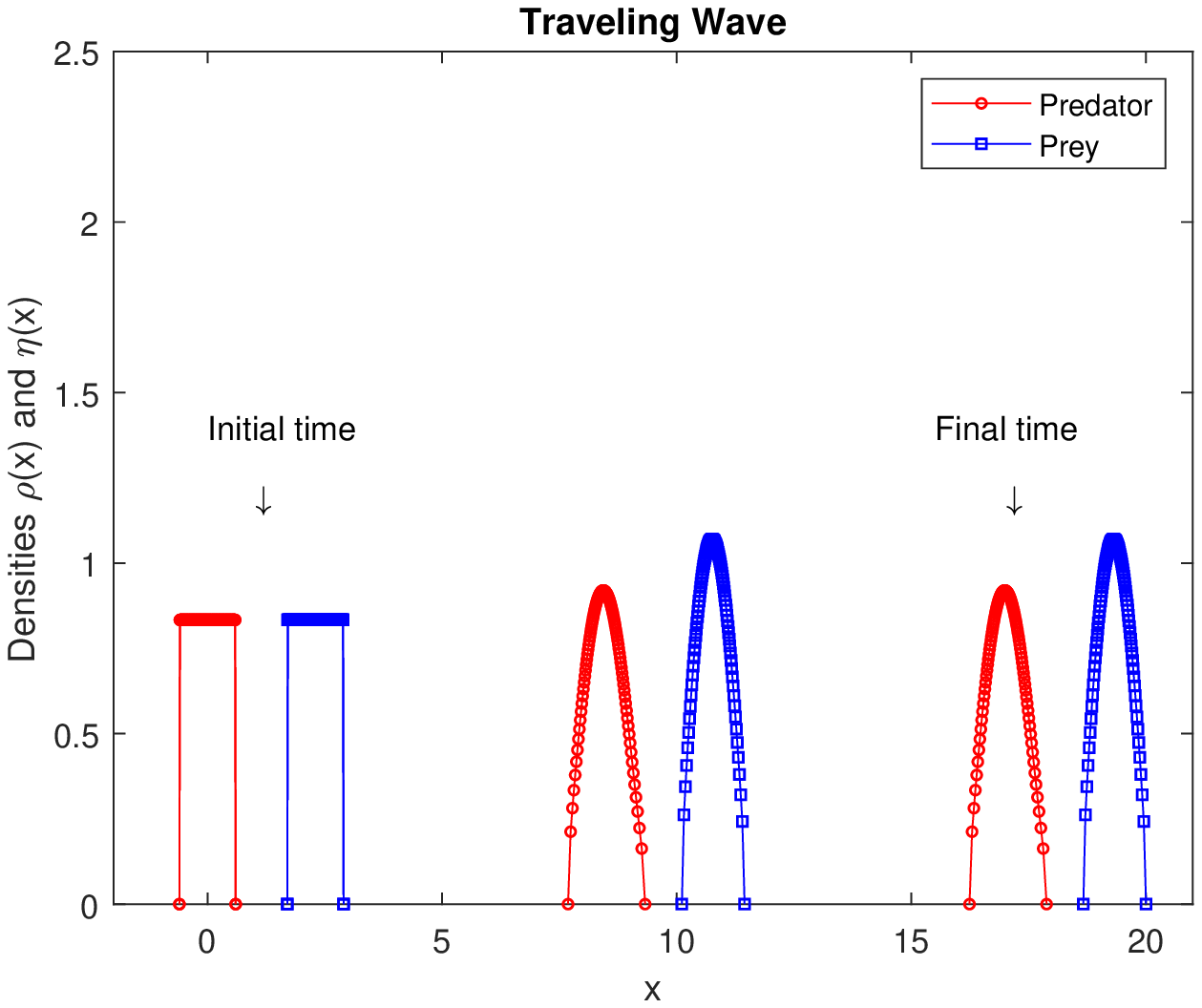}

\includegraphics[width=10cm,height=5cm]{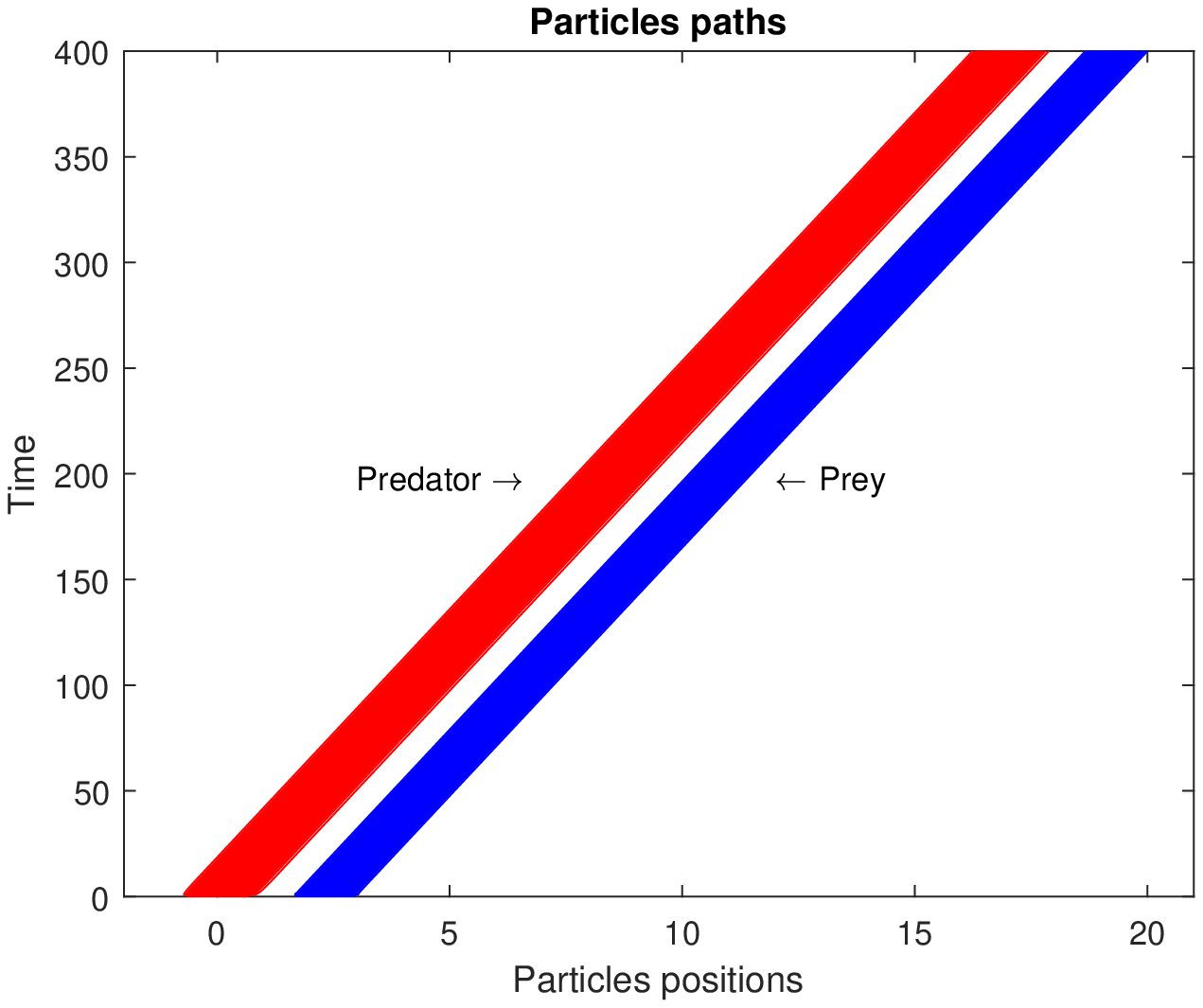}

\includegraphics[width=10cm,height=5cm]{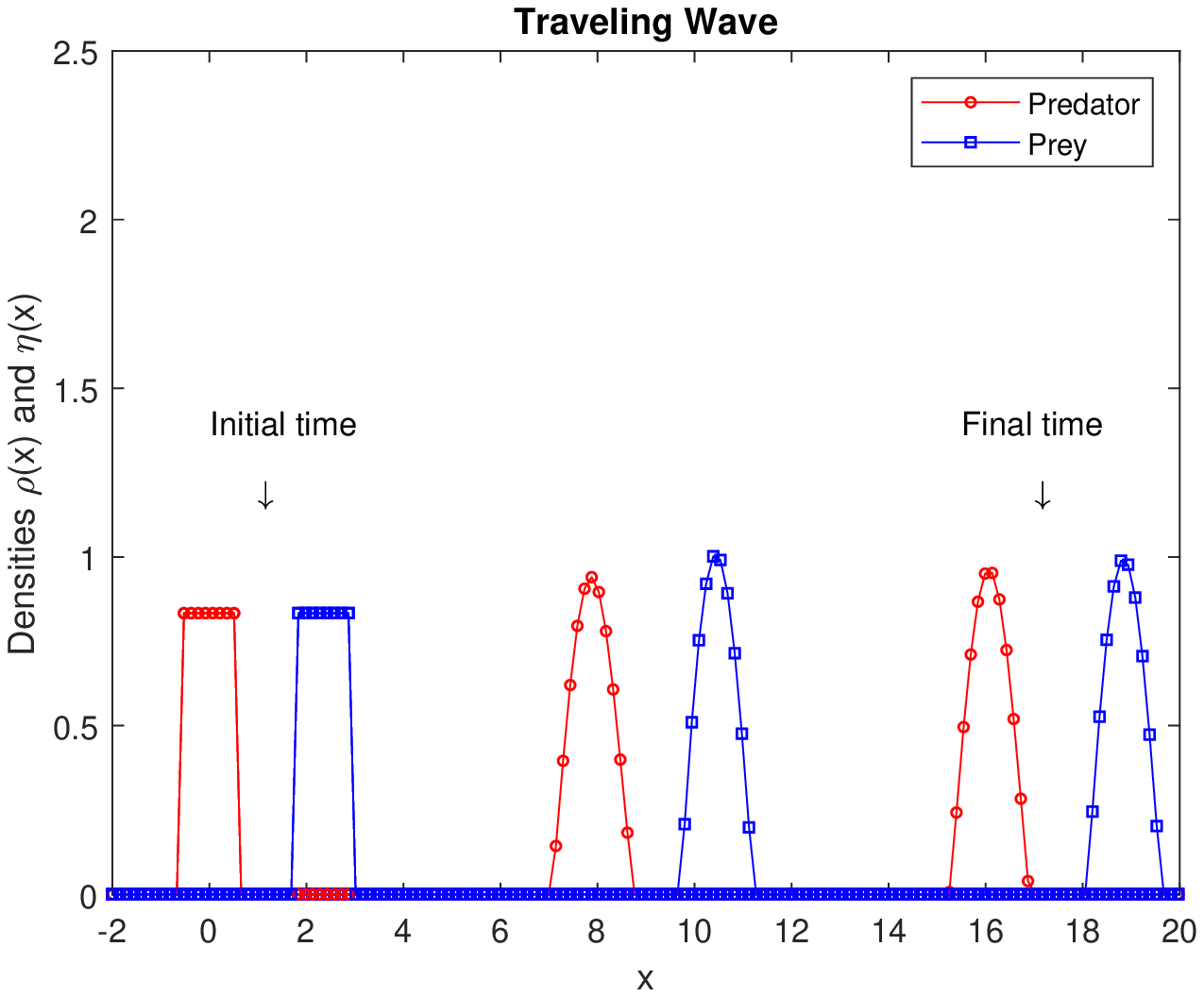}

\caption{This last figure shows a possible existence of traveling waves by choosing initial data as in \eqref{initial5}, $\alpha=1$ and $d=0.2$. The first two plots are performed by particles method, while the third one is done by finite volume method. Here we fix $N=101$. }
\label{Wave SS}
\end{center}
\end{figure}

In all the simulations below, we will fix the kernels  as a normalised Gaussian potentials
$$
S_{\rho}(x) = S_{\eta}(x) = K(x)
 = \frac{1}{\sqrt{\pi}} e^{-x^2},
$$
that are under the assumptions on the kernels (A1), (A2) and (A3). This choice helps us in better understanding the variation in the behavior of the solutions w.r.t. the change in the initial data and the parameter $\alpha$. In the first five examples, see Figures \ref{Mixed SS}, \ref{Separated SS}, \ref{From 1st to 2nd SS}, \ref{Multi SS2} and  \ref{Multi SS1} we show: 
\begin{itemize}
    \item in the first row  steady states are plotted at the level of density, on the l.h.s.  we compare the two methods illustrated above, while on the r.h.s. we show the evolution by the finite volume method,
    \item in the second row we plot the particles paths for both species obtained with the particles method,
    \item  in the last row we show the pseudo inverse functions corresponding to the steady state densities.
\end{itemize}
The last example we present shows an interesting \emph{travelling waves}-type evolution.

The first example is devoted to validating existence of mixed steady state and separated steady state.  By choosing the initial data $(\rho_0,\eta_0)$ as
\begin{equation}\label{initial1}
\rho_0(x) = \eta_0(x) = \frac{10}{14} \mathds{1}_{[-0.7,0.7]}(x),
\end{equation}
and fixing $\alpha=0.1$ and $d=0.4$, we obtain a mixed steady state as plotted in Figure \ref{Mixed SS}. Note that, the small value of $\alpha$ allows the predators to dominate the prey which results in the shape shown in Figure \ref{Mixed SS}. Next, we take the initial data as 
\begin{equation}\label{initial2}
\rho_0(x) = 0.5 \mathds{1}_{[-1,1]}(x), \qquad \eta_0(x) = 0.5 \mathds{1}_{[-4,-3]\cup[3,4]}(x),
\end{equation}
with the same $d$ as above and $\alpha=0.2$. This choice of the initial data, actually, introduce two equal attractive forces on the right and left hand sides of the predators which, in turn, fix the predators at the centre and gives the required shape of the separated steady state as shown in Figure \ref{Separated SS}. Finally, a sort of  separated steady state can also be obtained starting from the same initial data as in \eqref{initial1} and same diffusion parameter $d$. If we choose $\alpha=6$, the prey $\eta$ will have enough speed to get out of the predators region, producing a transition between the mixed state to the separated one, this is illustrated in Figure \ref{From 1st to 2nd SS}. 

We then, test two cases where we validate the existence theory of multiple--bumps steady states done in Section \ref{sec:implicit}. Let us fix $d=0.3$. Then, a four-bumps steady state is performed and plotted in Figure \ref{Multi SS2}, where we consider 
\begin{equation}\label{initial3}
\rho_0(x) =  \frac{10}{14}\mathds{1}_{[-0.7,0.7]}(x), \qquad \eta_0(x) = \frac{5}{21} \mathds{1}_{[-0.7,0.7]}(x) + \frac{1}{3} \mathds{1}_{[-6,-5]\cup[5,6]}(x),
\end{equation}
as initial data and we take $\alpha=0.05$. This way of choosing initial data produces a balanced attractive forces which in turn form the required steady state. Similarly, we can obtain a steady state of five bumps by using initial data
\begin{equation}\label{initial4}
\rho_0(x) =  \frac{1}{2} \mathds{1}_{[-5,-4]\cup[4,5]}(x), \qquad \eta_0(x) =  \frac{1}{3} \mathds{1}_{[-9,-8]\cup[-0.5,0.5]\cup[8,9]}(x),
\end{equation}
and $\alpha=1$, see Figure \ref{Multi SS1}. 

 In this example, we finally  detect existence of traveling waves. Indeed, by choosing  initial data as
\begin{equation}\label{initial5}
\rho_0(x) = \frac{10}{12}\mathds{1}_{[-0.6,0.6]}(x), \qquad \eta_0(x) = \frac{10}{12}\mathds{1}_{[1.7,2.9]}(x),
\end{equation}
$\alpha = 1$ and $d=0.2$, we obtain a traveling wave that is shown in Figure \ref{Wave SS}. Once the initial data and the value of $d$ are fixed, if $\alpha$ is taken small enough, then we will come out with a mixed steady state. If we take a larger value for $\alpha$ then the prey will be fast escaping from the predators. Therefore, the proper value for $\alpha$ will produce a situation where the \textit{attack speed} of the predators is equal to the \textit{escape speed} of the prey, which results in a travelling wave of both the densities $\rho$ and $\eta$. All the simulations above motivate further


\end{document}